\documentclass[reqno,11pt]{amsart}

\usepackage[T1]{fontenc}
\usepackage[utf8]{inputenc}
\usepackage{amsmath,amssymb,amsthm,mathtools}
\usepackage{mathrsfs}
\usepackage{enumitem}
\usepackage{microtype}
\usepackage{aliascnt}
\usepackage[hidelinks]{hyperref}
\usepackage{cleveref}

\newtheorem{theorem}{Theorem}[section]
\newaliascnt{prop}{theorem}
\newtheorem{prop}[prop]{Proposition}
\aliascntresetthe{prop}
\newaliascnt{lem}{theorem}
\newtheorem{lem}[lem]{Lemma}
\aliascntresetthe{lem}
\newaliascnt{cor}{theorem}
\newtheorem{cor}[cor]{Corollary}
\aliascntresetthe{cor}
\theoremstyle{definition}
\newaliascnt{defn}{theorem}
\newtheorem{defn}[defn]{Definition}
\aliascntresetthe{defn}
\newaliascnt{example}{theorem}

\aliascntresetthe{example}
\theoremstyle{remark}
\newaliascnt{remark}{theorem}
\newtheorem{remark}[remark]{Remark}
\aliascntresetthe{remark}
\crefname{prop}{Proposition}{Propositions}
\crefname{lem}{Lemma}{Lemmas}
\crefname{cor}{Corollary}{Corollaries}
\crefname{defn}{Definition}{Definitions}
\crefname{remark}{Remark}{Remarks}

\numberwithin{equation}{section}
\newcommand{\M}{\operatorname{M}}
\newcommand{\SL}{\operatorname{SL}}
\newcommand{\GL}{\operatorname{GL}}
\newcommand{\SO}{\operatorname{SO}}
\newcommand{\vol}{\operatorname{vol}}
\newcommand{\covol}{\operatorname{covol}}

\newcommand{\Leb}{\operatorname{Leb}}

\newcommand{\supp}{\operatorname{supp}}

\newcommand{\cM}{\mathcal M}

\title[Quantitative Oppenheim in signature $(2,2)$]{Quantitative Oppenheim in signature $(2,2)$ \\ via determinant values}
\author{Wooyeon Kim}
\address{Korea Institute for Advanced Study, Seoul, Republic of Korea}
\email{wooyeonkim@kias.re.kr}
\author{Hee Oh}
\address{Department of Mathematics, Yale University, New Haven, CT 06511}
\email{hee.oh@yale.edu}

\begin{document}
\begin{abstract}
We give a new proof of the quantitative Oppenheim theorem in signature
$(2,2)$, originally proved by Eskin--Margulis--Mozes, by recasting the
problem as one about determinant values on lattices in $\M_2(\mathbb R)$.
\end{abstract}

\maketitle
\section{Introduction}
The Oppenheim conjecture, proved by Margulis
(\cite{Margulis:1987}, \cite{Margulis:1989}), asserts that if $Q$ is an indefinite
quadratic form in $n\ge3$ variables which is not proportional to a rational
form, then $Q(\mathbb Z^n)$ is dense in $\mathbb R$. A quantitative
refinement asks, for fixed real numbers $a<b$, for the asymptotic behavior
as $T\to\infty$ of
$$
N_Q(a,b;T)
:=
\#\{v\in\mathbb Z^n:\|v\|<T,\ a<Q(v)<b\}.
$$
This quantitative Oppenheim problem was established in higher signatures by
Dani--Margulis \cite{dani-margulis:1993} and
Eskin--Margulis--Mozes \cite{eskin-margulis-mozes:1998}. The low signatures
$(2,1)$ and $(2,2)$ require additional Diophantine hypotheses: the
signature $(2,2)$ case was proved by Eskin--Margulis--Mozes
\cite{eskin-margulis-mozes:2005}, and the signature $(2,1)$ case was
proved by the first-named author in \cite{Kim}.

The purpose of this paper is to give a new proof of the $(2,2)$ case by
recasting it as a problem about determinant values on lattices in
$\M_2(\mathbb R)$. This determinant formulation is useful for two reasons.
First, it is the simplest member of the higher-dimensional determinant-value
problem studied in our companion paper \cite{KimOh_det}; in dimension $2$, the
main dynamical mechanism remains visible while many of the higher-dimensional
complications disappear. Second, it provides a transparent setting for the
modified-height and avoidance method introduced in \cite{Kim}. Thus the
present paper gives a streamlined model case for the method, while also
recovering the $(2,2)$ theorem of Eskin--Margulis--Mozes.

Let
$$
Q_0\begin{pmatrix}x&y\\ z&w\end{pmatrix}
=xw-yz.
$$
Then $Q_0$ is a quadratic form of signature $(2,2)$. Conversely, every
real quadratic form of signature $(2,2)$ is of the form
$Q_0\circ L$ for some real linear isomorphism
$L:\mathbb R^4\to\M_2(\mathbb R)$. Thus the fixed-form, varying-lattice
problem for the determinant on $\M_2(\mathbb R)$ is equivalent to the
usual formulation of the signature $(2,2)$ quantitative Oppenheim problem.

Let $\Lambda<\M_2(\mathbb R)$ be a lattice, and fix a norm
$\|\cdot\|$ on $\M_2(\mathbb R)$ invariant under left and right
multiplication by $\SO(2)$. For $a<b$ and $T>0$, set
$$
N_\Lambda(a,b;T)
:=
\#\{v\in\Lambda:\|v\|<T,\ a<\det v<b\}\qquad\text{and}
$$
$$
N_\Lambda^\times(a,b;T)
:=
\#\{v\in\Lambda:\|v\|<T,\ a<\det v<b,\ \det v\ne0\}.
$$
The distinction matters when $0\in(a,b)$, because lattice points with zero
determinant may contribute on the same $T^2$-scale as the nonsingular
points.

\subsection*{Main theorem}

We call $\Lambda$ \emph{determinant-rational} if
$$
\det(\Lambda)\subset \lambda\mathbb Q
\qquad\text{for some }\lambda\in\mathbb R^\times.
$$ 
Equivalently, $\Lambda$ is determinant-rational precisely when, for any $\mathbb Z$-basis $\mathcal B=(v_1,v_2, v_3,v_4)$, the determinant form
\begin{equation}\label{eq:FLambdadef}
Q_{\Lambda,\mathcal B}(x_1,x_2, x_3,x_4)=\det(x_1v_1+x_2v_2+x_3v_3+x_4v_4)
\end{equation}
is proportional to a polynomial with rational coefficients. This is the arithmetic obstruction to density of determinant values.

We impose a Diophantine condition excluding lattices which are too well
approximated by split rational determinant forms. The precise definition is
given in \Cref{def:diophantine}; under the quadratic-form identification, it
is the non-EWAS condition of Eskin--Margulis--Mozes \cite{eskin-margulis-mozes:2005}.

\begin{theorem}\label{thm:main}
Let $\Lambda<\M_2(\mathbb R)$ be a Diophantine lattice which is not
determinant-rational. There exist
$C_{\|\cdot\|}>0$ and $c_{\Lambda}^{\operatorname{sing}}\ge 0$ depending only on the norm, such that, for every
$a<b$,
\begin{equation}\label{eq:main-nonsingular}
N_\Lambda^\times(a,b;T)
\sim
\frac{C_{\|\cdot\|}}{\covol(\Lambda)}(b-a)T^2;
\end{equation}
\begin{equation}\label{eq:main-full}
N_\Lambda(a,b;T)
\sim
\left(
\frac{C_{\|\cdot\|}}{\covol(\Lambda)}(b-a)
+
c_\Lambda^{\operatorname{sing}}\mathbf 1_{\{0\in(a,b)\}}
\right)T^2.
\end{equation}
Moreover, $c_\Lambda^{\operatorname{sing}}>0$ if and only if $\Lambda$ contains
a rank-two $\mathbb Z$-submodule on which the determinant vanishes identically.
\end{theorem}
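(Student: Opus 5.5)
The plan is to run the Eskin--Margulis--Mozes dynamical scheme in the determinant model. The symmetry group of $\det$ on $\M_2(\mathbb R)$ is $H=\SL_2(\mathbb R)\times\SL_2(\mathbb R)$, acting by $(g_1,g_2)\cdot v=g_1vg_2^{-1}$, and it acts on the space $X$ of unimodular lattices in $\M_2(\mathbb R)$ (after normalizing $\covol$). The diagonal flow is $a_t=(\operatorname{diag}(e^{t/2},e^{-t/2}),\operatorname{diag}(e^{-t/2},e^{t/2}))$, and $K=\SO(2)\times\SO(2)$; both the norm and $\det$ are $K$-invariant.

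\textbf{Reduction to an equidistribution statement.} Following \cite{eskin-margulis-mozes:1998}, first approximate $\mathbf 1_{\{\|v\|<T,\ a<\det v<b\}}$ from above and below by sums of the form $\int_K f(a_tk v)\,dk$, with $T\asymp e^{t}$ and $f$ a compactly supported function on $\M_2(\mathbb R)$. This reduces \eqref{eq:main-nonsingular} to the statement that, for the Siegel transform $\hat f(\Lambda)=\sum_{v\in\Lambda}f(v)$,
\[
\int_K \hat f(a_tk\Lambda)\,dk\longrightarrow \int_X\hat f\,d\mu_X=\frac{1}{\covol(\Lambda)}\int_{\M_2(\mathbb R)}f,
\]
where only nonsingular $v$ are counted in the nonsingular version. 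Two routine computations then finish this part: the Siegel mean value formula, and the computation of the volume of $\{a<\det<b\}$ in a shell, which produces $C_{\|\cdot\|}(b-a)T^2$.

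\textbf{Equidistribution of the translated $K$-orbits.} For bounded continuous test functions, equidistribution of $a_tK\Lambda$ follows from Ratner's theorem together with the linearization method. Any intermediate closed orbit of a group containing the limit measure's stabilizer would force $\Lambda$ to be determinant-rational, which is excluded by hypothesis. The real difficulty is that $\hat f$ is unbounded: we need uniform integrability of $\hat f$ along $a_tK\Lambda$. In signature $(2,2)$ this genuinely fails for some lattices, which is exactly the role of the Diophantine hypothesis and the singular term.

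\textbf{Main obstacle: uniform integrability.} $\hat f$ is dominated by $\alpha(\Lambda)=\max_i\|\Lambda\cap V_i\|^{-1}$ over rational subspaces $V_i$. The contributions from $\alpha_1$ and $\alpha_3$ are controlled by the contraction inequality $\int_K\alpha_i^{s}(a_tk\Lambda)\,dk\le c\,e^{-\delta t}\alpha_i^s(\Lambda)+b$ for $s<2$. The function $\alpha_2$ only satisfies $s=1$ critically, because of the rank-two isotropic subspaces, where $\det$ vanishes identically.

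Here I will use the modified-height and avoidance approach of \cite{Kim}. Define a modified height $\tilde\alpha_2$ that omits the rank-two sublattices $L\subset\Lambda$ with $\det|_L\equiv0$; these are the lines in the two rulings $\{vw^{\perp}\}$ of the quadric $\{\det=0\}$. One then proves a Margulis-type inequality for $\tilde\alpha_2^{s}$ with $s>1$, up to an error term controlled by the frequency with which $a_tk\Lambda$ comes near a quasi-null (almost isotropic, almost split rational) plane. The avoidance step uses the Diophantine condition of \Cref{def:diophantine}, which says $\Lambda$ is not too well approximated by split rational forms. Combined with a counting bound on short almost-isotropic planes in $\Lambda$, it shows that the measure of $k\in K$ for which $a_tk\Lambda$ has such a short quasi-null plane tends to $0$ sufficiently fast. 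I expect this step, which passes from the Diophantine condition to the integrability of $\tilde\alpha_2^{s}$, to be the hardest. I would first try a direct dyadic count of short quasi-null planes of $a_tk\Lambda$, and compare it against the approximation exponent.

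\textbf{The full count and the singular constant.} For \eqref{eq:main-full}, add the points with $\det v=0$ that lie on genuinely isotropic rank-two sublattices $L$. Each such $L$ is a planar lattice, so it contributes $\#\{v\in L:\|v\|<T\}\sim \vol(B_1\cap L_{\mathbb R})T^2/\covol(L)$. We have $c_\Lambda^{\operatorname{sing}}=\sum_L \vol(B_1\cap L_{\mathbb R})/\covol(L)$. This sum is finite and converges by the Diophantine condition, which bounds the number of isotropic $L$ with $\covol(L)<R$. It is positive if and only if some such $L$ exists. The remaining singular vectors, those not lying in an isotropic plane, number $o(T^2)$ by the same avoidance estimate, which gives the stated dichotomy for $c_\Lambda^{\operatorname{sing}}$.
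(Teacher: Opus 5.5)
\textbf{The gap.} It lies in the step you flag as hardest, and it comes from removing too little from the height. You omit only the exactly isotropic rank-two sublattices. You then aim for $\sup_t\int_K\tilde\alpha_2(a_tk\Lambda)^s\,dk<\infty$ with $s>1$, treating quasi-null planes as an error controlled by how rarely $a_tk\Lambda$ meets them.

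The Diophantine hypothesis, however, gives no lower bound on $\min\{\|w-\pi_cw\|,\|w-\pi_rw\|\}$ for an \emph{individual} non-isotropic rational plane of $\Lambda$. It only rules out three rational planes of height at most $R$ lying within $\eta R^{-M}$ of the same summand $\mathcal M_c$ or $\mathcal M_r$, since that is what reconstructing a rational split form requires. So up to four quasi-null planes per dyadic height range survive, and each may be far closer to isotropic than $\eta\|w\|^{-M}$.

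Such a plane is nearly isotropic in $\Lambda$ itself; it is not carried there by the flow. Suppose $\|w-\pi_cw\|=\epsilon\|w\|$. For $t$ up to order $\log(1/\epsilon)$, the vector $a_tkw$ behaves like the Pl\"ucker vector of an exactly isotropic plane. Hence $\int_K\|a_tkw\|^{-s}\,dk$ grows exponentially in $t$ and reaches size about $\epsilon^{-c(s-1)}\|w\|^{-s}$. The set of $k$ on which this plane is short does shrink exponentially, but not fast enough to offset the $s$-th power. Since $\tilde\alpha_2(a_tk\Lambda)\ge\|a_tkw\|^{-1}$, the moment you want cannot be bounded uniformly in $t$ from the stated hypotheses. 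It is infinite as soon as $\Lambda$ has planes whose $\epsilon$ decays fast enough relative to $\|w\|$. No avoidance or nondivergence argument can remove these planes, because they are already present at $t=0$.

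\textbf{What is needed.} The paper splits the problem differently. It removes from the height \emph{all} quasi-null planes, namely those with $\min\{\|w-\pi_cw\|,\|w-\pi_rw\|\}\le\eta\|w\|^{-M}$, and proves the $(1+\theta')$-moment only for this smaller height. The omitted non-isotropic quasi-null planes are treated at the critical exponent $1$ only, using three facts:
\begin{enumerate}[label=(\roman*)]
\item $\int_K\|a_tkw\|^{-1}dk\le C\|w\|^{-1}$ uniformly in $t$;
\item this integral tends to $0$ for each fixed $w\notin\mathcal M_c\cup\mathcal M_r$;
\item the finite-scale bound of at most four such planes per dyadic height range, which is exactly what the Diophantine condition yields via the three-plane reconstruction, makes $\sum_V\|w_V\|^{-1}$ converge.
\end{enumerate}
Dominated convergence then kills their total contribution. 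A modified Lipschitz principle, $|\widehat f_{\mathrm{ni}}(h;\Lambda)|\ll\widehat\alpha_{\eta,M}(h;\Lambda)+\sum_V\|hw_V\|^{-1}$, gives the Siegel-transform limit.

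Even for the smaller height, the Margulis inequality at exponent above $1$ is not routine. The degree-two local estimate degenerates near $\mathcal M_c\cup\mathcal M_r$ for non-quasi-null vectors as well; the paper handles this with the weight $\phi_\theta$ and an exceptional set. The Diophantine condition enters the avoidance of that set not by counting short planes of $a_tk\Lambda$. Instead, it excludes the quantitative-nondivergence obstruction: a rational three-dimensional subspace of $\wedge^2\Lambda$ near $\mathcal M_c$ or $\mathcal M_r$, from which one would reconstruct a rational split form too close to the determinant form.
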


Here covolumes are computed with respect to the Lebesgue measure induced by
the trace inner product $\langle x,y\rangle=\operatorname{tr}(xy^{\intercal})$.
The constants $C_{\|\cdot\|}$ and $c_\Lambda^{\operatorname{sing}}$  are  characterized by the following:
\begin{equation}\label{eq:volume-asymptotic-intro}
\vol\{X\in\M_2(\mathbb R):\|X\|<T,\ a<\det X<b\}
\sim
C_{\|\cdot\|}(b-a)T^2;
\end{equation}
\begin{equation}\label{eq:sing-explicit}
c_\Lambda^{\operatorname{sing}}
=
\sum_{V\in\mathcal I_\Lambda}
\frac{\vol_V\{v\in V:\|v\|<1\}}
{\covol_V(\Lambda\cap V)},
\end{equation}
where $\mathcal I_\Lambda$ denotes the set of two-dimensional subspaces $V\subset\{\det=0\}$ for which $\Lambda\cap V$ is a lattice in $V$. Here $\vol_V$ is the Lebesgue measure on $V$ induced by the trace inner product, and $\covol_V$ is the corresponding covolume.
The finiteness of $\mathcal I_\Lambda$ for determinant non-rational lattices
is a special feature of the $2\times2$ case (see \Cref{prop:finite-isotropic-planes-d2}).

The following is a special case of Theorem \ref{thm:main} (\cite[\S13]{KimOh_det}).
\begin{cor}\label{cor:algebraic}
Let $\Lambda<\M_2(\mathbb R)$ be a lattice  whose elements have
algebraic entries. If $\Lambda$ is not determinant-rational, then the
conclusions of \Cref{thm:main} hold.
\end{cor}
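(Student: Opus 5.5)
The plan is to reduce \Cref{cor:algebraic} to \Cref{thm:main}. Since $\Lambda$ is assumed not determinant-rational, the only thing to check is that $\Lambda$ is Diophantine in the sense of \Cref{def:diophantine}. Under the quadratic-form identification, this is the statement that the determinant form $Q_{\Lambda,\mathcal B}$ of \eqref{eq:FLambdadef} is not EWAS. So I will show that a signature $(2,2)$ form with algebraic coefficients, not proportional to a rational form, cannot be extremely well approximated by split rational forms.

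\textbf{Step 1: an algebraic form.} Fix a $\mathbb Z$-basis $\mathcal B$ of $\Lambda$ whose entries are algebraic. Then $Q:=Q_{\Lambda,\mathcal B}$ has coefficients in some number field $K$ of degree $D$. After rescaling, I may assume one coefficient equals $1$. Non-rationality then means some other coefficient $\alpha$ lies in $K\setminus\mathbb Q$.

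\textbf{Step 2: the obstruction to Diophantine approximation.} The EWAS condition asks for integral split forms $Q_j$ of height $H_j\to\infty$ and scalars $t_j$ with $\|t_jQ-Q_j\|\le H_j^{-N}$ for every $N$ (or along a sequence, depending on the precise form of \Cref{def:diophantine}). I will normalize so that the coefficient of $Q_j$ in the slot where $Q$ has $1$ is an integer $q_j$ with $|q_j|\asymp H_j$. Then the coefficient of $Q_j$ in the slot of $\alpha$ is an integer $p_j$ with $|q_j\alpha-p_j|\ll H_j^{-N}$.

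For $\alpha$ irrational algebraic, Liouville's inequality gives
$$|\alpha-p/q|\gg_\alpha q^{-D}.$$
This contradicts the previous bound as soon as $N>D$, so EWAS fails. If $\alpha$ happened to be approximated exactly, i.e.\ $p_j/q_j=\alpha$, that would force $\alpha\in\mathbb Q$, which is excluded.

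\textbf{Main obstacle.} The delicate point is matching the exact formulation of \Cref{def:diophantine}. It may be phrased via lattices: $\Lambda$ close to $g\Lambda_{\mathbb Q}$, with $\Lambda_{\mathbb Q}$ split rational, at a polynomial rate in some height. In that case I need to extract from the definition a statement about the coefficients of $Q_{\Lambda,\mathcal B}$ relative to an integral form, with polynomial control on the height. I would do this by transporting the approximation back to the basis $\mathcal B$: the determinant form of the approximant is $\lambda$ times an integral form whose height is controlled polynomially.

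After that transport, Liouville's inequality, or Schmidt's subspace theorem if a uniform exponent below the one in the definition is needed, finishes the argument. If the required exponent is fixed (say $2$ or $4$) rather than arbitrary, Liouville is not enough. I would then apply Schmidt's subspace theorem to the linear forms in the coefficient vector $(1,\alpha_2,\dots)$. Non-rationality of $Q$ ensures these forms are not all forced into a proper rational subspace, except for finitely many subspaces, which are handled by induction on dimension.
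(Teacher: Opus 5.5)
Your proposal is correct and follows the paper's route. The corollary is a special case of \Cref{thm:main}, and the only work is to check that an algebraic lattice which is not determinant-rational is Diophantine. The paper defers this check to \cite[\S13]{KimOh_det}, and your Liouville argument supplies it directly.

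The hedging in your last paragraph, including the Schmidt subspace fallback, is unnecessary. \Cref{def:diophantine} lets $M$ depend on $\Lambda$. Moreover, $\operatorname{dist}([Q],[Q_0])\le\varepsilon$ converts into $\|tQ-Q_0\|\le\varepsilon\operatorname{ht}(Q_0)$ with $t=\pm\operatorname{ht}(Q_0)/\|Q\|$, which shifts the exponent by at most one. So Liouville's inequality already gives the condition with $M=D$.
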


\subsection*{Quadratic-form formulation}

Let $Q$ be a quadratic form of signature $(2,2)$ on $\mathbb R^4$.
Choose a real linear isomorphism $L:\mathbb R^4\to\M_2(\mathbb R)$ such that
$$
Q(x)=\det(Lx).
$$
Applying \Cref{thm:main} to the lattice $L(\mathbb Z^4)$  gives the following equivalent formulation. The Diophantine condition on $Q$ given in \eqref{dio} is the same as the non-EWAS condition of \cite{eskin-margulis-mozes:2005}.

\begin{theorem}\label{thm:quadratic-form-version}
Let $Q$ be a quadratic form of signature $(2,2)$ which is not proportional to a rational form. Suppose that $Q$ satisfies
the Diophantine condition.  Then there exist
$c_{Q}>0$ and
$c_{Q}^{\operatorname{sing}}\ge0$, depending on $\|\cdot\|$, such that  for every $a<b$,
$$
\#\{v\in\mathbb Z^4:\|v\|<T,\ a<Q(v)<b\}
\sim
\left(
c_Q(b-a)
+
c_Q^{\operatorname{sing}}\mathbf 1_{\{0\in(a,b)\}}
\right)T^2.
$$
Moreover, $c_Q^{\operatorname{sing}}>0$ precisely when $Q$ vanishes
identically on a rational two-dimensional subspace of $\mathbb R^4$.
\end{theorem}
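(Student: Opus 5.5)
This follows from \Cref{thm:main} by a change of variables. Fix $Q$ of signature $(2,2)$ and choose a real linear isomorphism $L:\mathbb R^4\to\M_2(\mathbb R)$ with $Q(x)=\det(Lx)$. Such an $L$ exists because $Q_0=\det$ has signature $(2,2)$ and all nondegenerate forms of the same signature are equivalent. Set $\Lambda=L(\mathbb Z^4)$, a lattice in $\M_2(\mathbb R)$. Then $v\mapsto Lv$ is a bijection from $\{v\in\mathbb Z^4: a<Q(v)<b\}$ onto $\{w\in\Lambda: a<\det w<b\}$.

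The norm needs care. The norm $\|\cdot\|$ on $\mathbb R^4$ in \Cref{thm:quadratic-form-version} is arbitrary, while \Cref{thm:main} uses a norm on $\M_2(\mathbb R)$ invariant under $\SO(2)\times\SO(2)$. I would not try to transport the norm through $L$, since $\|L^{-1}\cdot\|$ need not be bi-$\SO(2)$-invariant. Instead I would use the fact that the proof of \Cref{thm:main} works for counting in $T\Omega$, where $\Omega$ is any bounded region with reasonable (e.g.\ piecewise smooth, star-shaped) boundary. In that generality the leading constants become $\vol$ of $\{X\in T\Omega: a<\det X<b\}$ normalized by $T^2(b-a)$, plus the singular sums $\sum_V \vol_V(V\cap\Omega)/\covol_V(\Lambda\cap V)$. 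Here $\Omega=L(\{\|x\|<1\})$. If the paper's proof genuinely needs $\SO(2)$-invariance, for instance in equidistribution of expanding $K$-orbits, I would handle this by decomposing $\Omega$ in polar-type coordinates adapted to the $KAK$ decomposition and approximating $\mathbf 1_\Omega$ from above and below by finite combinations of functions of the form $f(k_1)g(\|a\|)f'(k_2)$. The counting argument is linear in such test functions, so this suffices. With either route, the constants are $c_Q=\lim T^{-2}(b-a)^{-1}\vol\{x:\|x\|<T,\ a<Q(x)<b\}$, which is positive by homogeneity, and $c_Q^{\mathrm{sing}}$ is the transported version of \eqref{eq:sing-explicit}.

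Next I would match the hypotheses. $Q$ is proportional to a rational form exactly when $\det$ restricted to $\Lambda$, written in the basis $Le_1,\dots,Le_4$, is proportional to a rational polynomial, since this is just $Q_{\Lambda,\mathcal B}=Q$. So $Q$ is not proportional to a rational form if and only if $\Lambda$ is not determinant-rational. The Diophantine condition \eqref{dio} on $Q$ is, as stated, the non-EWAS condition, and \Cref{def:diophantine} translates into it under $L$. Finally, the singular part: $c^{\mathrm{sing}}>0$ if and only if $\mathcal I_\Lambda\neq\emptyset$, i.e.\ $\Lambda$ has a rank-two submodule with $\det\equiv0$. Pulled back by $L$, this is a rank-two subgroup of $\mathbb Z^4$, equivalently a rational two-dimensional subspace on which $Q$ vanishes identically.

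I expect the main obstacle to be the norm issue: justifying that the count in \Cref{thm:main} is insensitive to replacing the bi-$\SO(2)$-invariant ball by the general convex body $L(B)$. Everything else is bookkeeping.
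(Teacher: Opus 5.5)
Your reduction is the paper's argument. You choose $L$ with $Q=\det\circ L$, put $\Lambda=L(\mathbb Z^4)$, and note that for $\mathcal B=(Le_1,\dots,Le_4)$ one has $Q_{\Lambda,\mathcal B}=Q$. So non-rationality of $Q$ and the Diophantine condition \eqref{dio} are literally the hypotheses of \Cref{thm:main}, and $\mathcal I_\Lambda$ pulls back to the rational two-planes on which $Q$ vanishes. The paper does nothing more. In \Cref{thm:quadratic-form-version} the norm on $\mathbb R^4$ is meant to be $x\mapsto\|Lx\|$, with $\|\cdot\|$ the fixed bi-$\SO(2)$-invariant norm on $\M_2(\mathbb R)$; that is what ``equivalent formulation'' and ``depending on $\|\cdot\|$'' refer to. With that reading $c_Q=C_{\|\cdot\|}/\covol(\Lambda)$ and $c_Q^{\operatorname{sing}}=c_\Lambda^{\operatorname{sing}}$, your third paragraph completes the proof, and the norm discussion can be dropped.

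If you insist on an arbitrary norm on $\mathbb R^4$, the step you call bookkeeping is a genuine gap. This is a genuinely stronger statement: for a generic Euclidean norm, no choice of $L$ makes $\|L^{-1}\cdot\|$ bi-$\SO(2)$-invariant. Your claim that the proof of \Cref{thm:main} works for $T\Omega$ with general $\Omega$ is false for the argument as written:
\begin{itemize}
\item \Cref{prop:parameter-limit-d2} rests on the identity $T^{-2}J_{f,T}(T^{-1}\sigma_1(v),\det v)=\int_K f(a_tkv)\,dk$. This uses the invariance of $dk$ and gives asymptotics only for test functions of $(\sigma_1(v),\det v)$.
\item \Cref{prop:volume-d2} and \Cref{lem:compact-window-counting-d2} use that $\|v\|$ depends only on the signed singular values.
\end{itemize}
Linearity does not rescue your fallback, because no test function depending on the $K$-coordinates of $v$ is covered at all.

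What is needed is to rerun the argument with weighted averages $\int_K\widehat f_{\operatorname{ni}}(a_tk;\Lambda)\,\nu(k)\,dk$, $\nu\in C(K)$:
\begin{itemize}
\item The equidistribution in \Cref{shah} is available for such weights; this is the form used in \cite{eskin-margulis-mozes:1998}.
\item The moment bound of \Cref{thm:moment-d2} carries over because $\nu$ is bounded.
\item \Cref{lem:orbital-kernel-d2} must be redone so that the limit records $\nu$ at the $K$-coordinate of $v$.
\end{itemize}
When $\sigma_1(v)\asymp T$ and $\det v$ is bounded, $v=\sigma_1(v)\,k_1E_{11}k_2^{\intercal}+O(T^{-1})$. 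Hence the condition $\|L^{-1}v\|<T$ can be sandwiched between such weighted test functions. This is the Eskin--Margulis--Mozes route and it works, but it extends the paper's theorem rather than following from it.
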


The principal difference between our proof and that of
Eskin--Margulis--Mozes lies in the derivation of the uniform moment bound
needed for the counting argument. Their modified height, obtained by
omitting quasi-null planes, is well suited to counting, but it does not
directly satisfy the Margulis-type inequality needed for our iteration.
We therefore introduce a second, auxiliary height. Following the strategy
of \cite{Kim}, we establish the required inequality for this auxiliary
height away from an exceptional set and control the exceptional set using
a quantitative avoidance estimate.

\subsection*{Organization of the paper}
Section~2 introduces the notation used throughout the paper and outlines the
proof of the main theorem. Section~3 discusses determinant-rationality and
proves the finiteness of rational isotropic planes. Section~4 formulates the
Diophantine condition and introduces the modified height. Sections~5--8
establish the uniform moment bound: Section~5 proves the local contraction
estimates, Section~6 globalizes these estimates to the auxiliary height,
Section~7 establishes avoidance of the exceptional set, and Section~8
combines contraction and avoidance through an iterative argument. Section~9
shows that the contribution of non-isotropic quasi-null planes vanishes in
the limit and proves the modified Siegel-transform limit. Finally,
Section~10 converts this dynamical limit into the determinant-counting
asymptotic and incorporates the singular contribution.

\subsection*{Acknowledgements}
W.K. is supported
by KIAS individual grant HP101301. H.O. is partially supported by NSF grant DMS-2450703.

\subsection*{AI disclosure}
The authors used AI tools to assist with checking proofs  and editing the manuscript. All mathematical ideas and contributions are due to the authors.
\section{Notation and proof ideas}

\subsection*{Notation}

Throughout the paper, we set
\begin{equation}\label{gg}
G=\SL_4(\mathbb R),\qquad
\Gamma=\SL_4(\mathbb Z),\qquad
X=G/\Gamma.
\end{equation}
We denote by $m_X$ the $G$-invariant probability measure on $X$, and
write $[g]=g\Gamma\in X$ for $g\in G$.

We identify $\M_2(\mathbb R)$ with $\mathbb R^4$ by identifying the
elementary matrices
$
E_{11},\ E_{12},\ E_{21},\ E_{22}
$
with the standard basis vectors $e_1,e_2,e_3,e_4$, respectively. Under
this identification, $X$ is the space of unimodular lattices in
$\M_2(\mathbb R)$. We equip $\M_2(\mathbb R)$ with the trace inner
product
$$
\langle v,w\rangle=\operatorname{tr}(vw^{\intercal}),\qquad(v, w\in \M_2(\mathbb R))
$$
and use the induced Euclidean norms on $\M_2(\mathbb R)$ and its exterior
powers in all height and representation-theoretic constructions. 

The group $\SL_2(\mathbb R)\times\SL_2(\mathbb R)$ acts on
$\M_2(\mathbb R)$ by
\begin{equation}\label{gh}
(g_1,g_2)\cdot v=g_1vg_2^{\intercal}
\qquad
(g_1,g_2\in\SL_2(\mathbb R),\ v\in\M_2(\mathbb R)).
\end{equation}
Under the identification $\M_2(\mathbb R)\simeq\mathbb R^4$, this action
is given by the tensor-product representation $g_1\otimes g_2$. We denote
its image in $G$ by
$$
H
=
\SL_2(\mathbb R)\otimes\SL_2(\mathbb R)
=
\{g_1\otimes g_2:g_1,g_2\in\SL_2(\mathbb R)\}
<G.
$$
Thus the $\SL_2(\mathbb R)\times\SL_2(\mathbb R)$-orbit of
$v\in\M_2(\mathbb R)$ corresponds to the $H$-orbit of $v\in\mathbb R^4$.
We write $e$ for the identity element of $G$.

Write
$$
\M_2(\mathbb R)=\mathsf C\otimes\mathsf R,
$$
where $\mathsf C$ and $\mathsf R$ are the column and row copies of
$\mathbb R^2$, respectively. The action of $H$ extends naturally to
$\wedge^i\M_2(\mathbb R)$ for $1\le i\le3$. The $H$-modules
$$
\wedge^1\M_2(\mathbb R)=\mathsf C\otimes\mathsf R
\qquad\text{and}\qquad
\wedge^3\M_2(\mathbb R)\simeq\mathsf C^*\otimes\mathsf R^*
$$
are irreducible, whereas
\begin{equation}\label{decom}
\wedge^2\M_2(\mathbb R)
=
\mathcal M_c\oplus\mathcal M_r
\end{equation}
is the decomposition into irreducible $H$-modules, where
\begin{equation}\label{decom2}
\mathcal M_c
=
\operatorname{Sym}^2(\mathsf C)\otimes\wedge^2\mathsf R,
\qquad
\mathcal M_r
=
\wedge^2\mathsf C\otimes\operatorname{Sym}^2(\mathsf R).
\end{equation}
See, for instance, \cite[Section~4.2]{FultonHarris} or
\cite[Chapter~I]{FultonYoung}. The right $\SL_2(\mathbb R)$-factor acts
trivially on $\mathcal M_c$, while the left $\SL_2(\mathbb R)$-factor
acts trivially on $\mathcal M_r$. Let $\pi_c$ and $\pi_r$ denote the
orthogonal projections onto $\mathcal M_c$ and $\mathcal M_r$,
respectively.

Let $V<\M_2(\mathbb R)$ be an $i$-dimensional subspace for $1\le i\le 3$. A nonzero vector
$$
\mathsf w_V=v_1\wedge\cdots\wedge v_i
\in\wedge^i\M_2(\mathbb R)
$$
is called a \emph{Pl\"ucker vector} of $V$ if
$\{v_1,\ldots,v_i\}$ is a basis of $V$. It is unique up to multiplication
by a nonzero scalar. If $V$ is a two-plane, then
$$
\det|_V\equiv0
\quad\Longleftrightarrow\quad
\mathsf w_V\in\mathcal M_c\cup\mathcal M_r.
$$

Let $\Delta<\M_2(\mathbb R)$ be a lattice. An $i$-dimensional subspace
$V$ is called $\Delta$-rational if $V\cap\Delta$ is a lattice in $V$.
For such a subspace, a vector
$$
\mathsf w_{\Delta,V}
=
v_1\wedge\cdots\wedge v_i
\in\wedge^i\M_2(\mathbb R)
$$
is called a \emph{primitive Pl\"ucker vector} if
$\{v_1,\ldots,v_i\}$ is a $\mathbb Z$-basis of $V\cap\Delta$. It is
unique up to sign, and its norm is the covolume of $V\cap\Delta$ in $V$.

For $1\le i\le3$, define
$$
\alpha_i(\Delta)
=
\sup_{\substack{V\text{ $\Delta$-rational},\; \dim V=i}}
\|\mathsf w_{\Delta,V}\|^{-1},
$$
and set
$$
\alpha(\Delta)=\max_{1\le i\le3}\alpha_i(\Delta).
$$

Finally, identifying $\SL_2(\mathbb R)\times\SL_2(\mathbb R)$ with its
image in $H$, we set
$$
K=\SO(2)\times\SO(2),\qquad
b_t=
\begin{pmatrix}
e^{-t}&0\\
0&e^t
\end{pmatrix},
\qquad
a_t=(b_t,b_t).
$$
We equip $K$ with probability Haar measure $dk$, obtained by pushing
forward the product probability Haar measure on
$\SO(2)\times\SO(2)$.
The two parameters $t$ and $T$ are related by $ T=e^{2t}$.

We write
$$
u_\xi=
\begin{pmatrix}
1&0\\
\xi&1
\end{pmatrix},
\qquad
n_{\xi_1,\xi_2}=(u_{\xi_1},u_{\xi_2}),
$$
and set
$$
N=\{n_{\xi_1,\xi_2}:\xi_1,\xi_2\in\mathbb R\},
\qquad
B_N(R)=\{n_{\xi_1,\xi_2}:|\xi_1|,|\xi_2|\le R\}.
$$
Under the identification $N\simeq\mathbb R^2$, the set $B_N(R)$ is the
ball of radius $R$ for the sup norm. We normalize $dn$ so that $B_N(1)$
has measure one.

\subsection*{Proof ideas}\label{idea}
For this discussion, after rescaling, we assume that $\Lambda\in X$. The
general case is recovered from the normalization in
Section~\ref{sec:det-rational}.
For $f\in C_c(\M_2(\mathbb R)\smallsetminus\{0\})$, define its Siegel transform by
$$
\widehat f(\Delta)
=
\sum_{0\ne v\in\Delta}f(v),
\qquad \Delta\in X.
$$
After smoothing the conditions $\|v\|<T$ and $a<\det v<b$, we reduce the
counting problem to averages of the form
$$
\int_K\widehat f(a_tk\Lambda)\,dk.
$$

By Ratner's orbit-closure theorem \cite{ratner:1991}, the assumption that
$\Lambda$ is not determinant-rational implies that $H\Lambda$ is dense in
$X$. Shah's theorem on expanding translates \cite{shah} then gives, for
every  continuous compactly supported function $F$ on $X$,
$$
\lim_{t\to\infty}\int_KF(a_tk\Lambda)\,dk
=
\int_XF\,dm_X.
$$
The difficulty is that $\widehat f$ is unbounded in the cusp. The usual
Lipschitz principle gives
$|\widehat f(\Delta)|\ll_f\alpha(\Delta)$, so it would suffice to prove
$$
\sup_{t\ge0}
\int_K\alpha(a_tk\Lambda)^{1+\delta}\,dk
<\infty
\qquad\text{for some $\delta>0$}.
$$
This estimate, however, fails in the critical exterior degree $2$.
We encounter two related obstructions. First, a $\Lambda$-rational
isotropic two-plane contains a rank-two sublattice and may therefore
contribute on the same $T^2$-scale as the nonsingular points. We remove
such planes from the nonsingular count and treat them separately. A
special feature of the $2\times2$ setting is that only finitely many such
planes arise: a lattice that is not determinant-rational has at most two
rational column-isotropic planes and at most two rational row-isotropic
planes.
The second obstruction comes from rational planes that are close to being
isotropic. Fix $\eta>0$ and $M>1$. We call a $\Delta$-rational two-plane
$(\eta,M)$-quasi-null if its primitive Pl\"ucker vector $w$ satisfies
$$
\min\{\|w-\pi_cw\|,\|w-\pi_rw\|\}
\le
\eta\|w\|^{-M}.
$$
Near either exceptional summand $\mathcal M_c$ or $\mathcal M_r$, one of
the two $\SL_2(\mathbb R)$-factors acts almost trivially on the dominant
component, and the degree-$2$ contraction estimate degenerates.

The Diophantine hypothesis gives a uniform finite-scale bound for such
planes. More precisely, we can choose $\eta$ and $M$ so that, for every
$R\ge1$,
$$
\#\left\{
V:
\|\mathsf w_{\Lambda,V}\|\le R,\min\{\|\mathsf w_{\Lambda,V}-\pi_c\mathsf w_{\Lambda,V}\|,
      \|\mathsf w_{\Lambda,V}-\pi_r\mathsf w_{\Lambda,V}\|\}
\le\eta R^{-M}\right\}
\le4.
$$
Indeed, five such planes would allow us to reconstruct a rational split
quadratic form approximating the determinant form more closely than the
Diophantine condition permits.

We therefore replace the ordinary height by the modified height
$\widehat\alpha_{\eta,M}$ introduced in
\Cref{def:modified-height-d2}.  Its degree-$1$ and degree-$3$ terms agree
with those of $\alpha$, while in degree $2$ we omit the quasi-null
planes.

To prove a moment estimate for $\widehat\alpha_{\eta,M}$, we introduce the
auxiliary height $\widetilde\alpha_{\eta,M,\theta}$ of
\Cref{def:auxiliary-height-d2}, where $\theta>0$ is small. In the critical
degree, this height uses the weighted function $\phi_\theta$ defined in
\eqref{eq:d2-local-weight}. The weight compensates for the degeneration of
the degree-$2$ contraction estimate near $\mathcal M_c$ and
$\mathcal M_r$, while quasi-null planes are retained with a subcritical
exponent. The comparison in \eqref{eq:auxiliary-comparisons} gives that for every $h\in H$
and $\Lambda\in X$,
$$
\widehat\alpha_{\eta,M}(h;\Lambda)^{1+\theta}
\ll_\theta
\widetilde\alpha_{\eta,M,\theta}(h;\Lambda).
$$

We isolate an exceptional set consisting of points for which a
non-quasi-null rational two-plane of controlled covolume is carried
extremely close to $\mathcal M_c$ or $\mathcal M_r$. Outside this set, we
prove the Margulis inequality
$$
\int_{B_N(1)}
\widetilde\alpha_{\eta,M,\theta}(a_snh;\Lambda)\,dn
\le
Ce^{-cs}\widetilde\alpha_{\eta,M,\theta}(h;\Lambda)
+
Ce^{Cs}
$$
for some constants $c,C>0$ (Proposition \ref{prop:globalcontraction}).

We control the exceptional set using quantitative nondivergence and the
Diophantine hypothesis. The obstruction alternative in the quantitative
nondivergence estimate in \cite{kleinbock:2008} would produce a rational subspace of
$\wedge^2\M_2(\mathbb R)$ lying very close to $\mathcal M_c$ or
$\mathcal M_r$. From such a subspace, one reconstructs a rational split
quadratic form approximating the determinant form too well, contradicting
the Diophantine condition. 
We then combine
contraction outside the exceptional set with quantitative avoidance in a
stopping-time iteration. This yields some $\delta>0$ such that
\begin{equation}\label{di}
\sup_{t\ge0}
\int_K
\widehat\alpha_{\eta,M}(a_tk;\Lambda)^{1+\delta}\,dk
<\infty.
\end{equation}
This derivation of \eqref{di} is the principal difference between our
proof and that of \cite{eskin-margulis-mozes:2005}. Their argument works
directly with the $K$-averages: they approximate the bad sets associated
with short rational planes by rectangles, divide them into
\emph{old} and \emph{new} families, and control them through shrinking,
overlap, and nesting arguments.

We must still account for the planes omitted from
$\widehat\alpha_{\eta,M}$. The finite-scale bound leaves at most four
quasi-null planes in each dyadic Pl\"ucker-height range, while a critical
first-moment estimate shows that the contribution of each fixed
non-isotropic quasi-null plane tends to zero. Summing over the dyadic
ranges, we conclude that their total contribution is negligible.

The modified Lipschitz principle and the uniform moment bound therefore give uniform integrability for the Siegel transform after removing the vectors contained in the exact rational isotropic planes. Denote this modified Siegel transform by $\widehat f_{\operatorname{ni}}$. We may then truncate in the cusp, apply Shah's theorem to the bounded truncation, and use Siegel's mean-value formula \cite{siegel1945mean} to obtain
$$
\lim_{t\to\infty}
\int_K\widehat f_{\operatorname{ni}}(a_tk;\Lambda)\,dk
=
\frac{1}{\covol(\Lambda)}
\int_{\M_2(\mathbb R)}f(x)\,dx.
$$

Finally, a fiber-kernel argument converts this limit into the
determinant-counting asymptotic for the non-isotropic points, and a
dyadic-shell estimate removes the remaining singular-value truncation.
We count the finitely many rational isotropic planes separately using the
standard two-dimensional lattice-point asymptotic, which yields the
singular term in
\eqref{eq:main-full}.
\begin{remark}
Lindenstrauss, Mohammadi, and Wang \cite{LMW-flat-torus} establish
pair-correlation asymptotics with polynomial error terms for the spectra
of flat two-dimensional tori. They realize their arithmetic family on the
product space
$(\SL_2(\mathbb R)/\SL_2(\mathbb Z))^2$, which allows them to treat the
cusp more directly. By contrast, we treat general Diophantine quadratic
forms of signature $(2,2)$ in the $\SL_4$-lattice space.
\end{remark}

\section{Determinant rationality and finiteness of rational isotropic planes}
\label{sec:det-rational}
We recall from \cite{KimOh_det} the determinant-rationality,
orbit-closure, and rationality-obstruction results needed below, and give
the short dimension-$2$ argument proving the finiteness of rational
isotropic planes. By rescaling, it suffices to prove the main theorem for
unimodular lattices; hence, throughout the proof, we assume that
$\Lambda\in X$.

By \cite[Theorem~2.7]{KimOh_det}, a lattice
$\Lambda<\M_2(\mathbb R)$ is determinant-rational if and only if, for
some\/ (equivalently, every) $\mathbb Z$-basis $\mathcal B$ of $\Lambda$,
the determinant form $Q_{\Lambda,\mathcal B}$ is proportional to a
polynomial with rational coefficients.

The following orbit-closure dichotomy follows from Ratner's theorem
\cite{ratner:1991}; see \cite[Theorem~2.10]{KimOh_det}.

\begin{theorem}\label{thm:orbit-dichotomy-d2}
Let $\Lambda\in X$. Then $H\Lambda$ is closed in $X$ if and only if
$\Lambda$ is determinant-rational; otherwise, $H\Lambda$ is dense in
$X$.
\end{theorem}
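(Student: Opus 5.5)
We plan to apply Ratner's orbit-closure theorem to the orbit $H\Lambda$, where $\Lambda = g\Gamma$. The group $H\cong \SL_2(\mathbb R)\times\SL_2(\mathbb R)/\{\pm1\}$ is generated by unipotent one-parameter subgroups, so $\overline{H\Lambda}=L\Lambda$. Here $L$ is a closed connected subgroup with $H\le L\le G$, and $L\cap g\Gamma g^{-1}$ is a lattice in $L$. Equivalently, $g^{-1}Lg$ is the identity component of the real points of a $\mathbb Q$-subgroup of $\SL_4$ with no $\mathbb Q$-characters.

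The first step is to classify the intermediate connected subgroups $H\le L\le \SL_4(\mathbb R)$. The Lie algebra $\mathfrak h=\mathfrak{so}(2,2)\cong\mathfrak{sl}_2\oplus\mathfrak{sl}_2$ is the Lie algebra of $\SO(Q_0)^\circ$, and $\mathfrak{sl}_4=\mathfrak{so}(Q_0)\oplus\mathfrak p$. Here $\mathfrak p\cong \operatorname{Sym}^2_0$ is the space of $Q_0$-symmetric traceless endomorphisms. As an $H$-module, $\mathfrak p\cong \operatorname{Sym}^2\mathsf C\otimes\operatorname{Sym}^2\mathsf R$ is irreducible of dimension $9$. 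Hence any $\mathfrak h$-invariant subspace $\mathfrak l\supset\mathfrak h$ is either $\mathfrak h$ or $\mathfrak{sl}_4$, since $\mathfrak l=\mathfrak h\oplus(\mathfrak l\cap\mathfrak p)$. We conclude that $L=H$ or $L=G$; in the former case the orbit $H\Lambda$ is closed, and in the latter it is dense. It remains to identify the closed case with determinant-rationality.

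If $H\Lambda$ is closed, then $g^{-1}Hg=\SO(Q_{\Lambda,\mathcal B})^\circ$ (with $\mathcal B$ the basis $g e_i$) is defined over $\mathbb Q$ and contains a lattice. A $\mathbb Q$-structure on the orthogonal group determines the invariant quadratic form up to scalar. Indeed, the space of invariant quadratic forms is one-dimensional, because the representation is absolutely irreducible. This space is Galois-stable, so it is spanned by a rational form. Thus $Q_{\Lambda,\mathcal B}$ is proportional to a rational form, and by \cite[Theorem~2.7]{KimOh_det} $\Lambda$ is determinant-rational.

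Conversely, suppose $Q_{\Lambda,\mathcal B}=\lambda Q'$ with $Q'$ rational. Then $g^{-1}Hg=\SO(Q')^\circ$ is a semisimple $\mathbb Q$-group, with connected component taken in the real topology. By Borel--Harish-Chandra, $\SO(Q')(\mathbb Z)$ is a lattice in it, so $H g\Gamma$ is closed. The main point requiring care is the irreducibility of $\mathfrak p$ under $H$, which rules out an intermediate $L$. This is a direct weight computation: the weights of $\mathfrak p$ are $(\pm2,0,\ldots)$ combinations of the pair $\{-2,0,2\}\times\{-2,0,2\}$, giving a single irreducible $9$-dimensional module.
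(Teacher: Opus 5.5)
Your proposal is correct and is essentially the argument the paper relies on (the paper gives no proof here; it invokes Ratner's theorem and refers to \cite[Theorem~2.10]{KimOh_det}). You combine four ingredients: Ratner's orbit-closure theorem; maximality of $H=\SO(Q_0)^\circ$ among connected subgroups of $G$, from irreducibility of the $9$-dimensional complement $\operatorname{Sym}^2\mathsf C\otimes\operatorname{Sym}^2\mathsf R$; Schur's lemma plus Galois-stability to get a rational invariant form in the closed case; and Borel--Harish-Chandra for the converse. The only looseness is the ``equivalently'' in your first paragraph, which is not a general equivalence. For $L=H$ it is exactly Borel density: $g^{-1}Hg$ is semisimple without compact factors, so the lattice $g^{-1}Hg\cap\Gamma$ is Zariski dense in $\SO(Q_{\Lambda,\mathcal B})$, which is therefore defined over $\mathbb Q$, and that is all you use.
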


We next consider rational two-planes contained in the singular set. We
call a two-dimensional subspace $V<\M_2(\mathbb R)$
\emph{column-isotropic} if all columns of all matrices in $V$ lie in a
common line in $\mathbb R^2$, and \emph{row-isotropic} if the analogous
condition holds for rows. The decomposition \eqref{decom} gives the
following characterization; see \cite[Lemma~3.2]{KimOh_det}.

\begin{lem}\label{lem:isotropic-plucker-d2}
Let $V<\M_2(\mathbb R)$ be a two-dimensional subspace, and let
$\mathsf w_V\in\wedge^2\M_2(\mathbb R)$ be a Pl\"ucker vector of $V$.
Then $V\subset\{\det=0\}$ if and only if $V$ is column-isotropic or
row-isotropic. Moreover,
$$
\begin{aligned}
V\text{ is column-isotropic}
&\quad\Longleftrightarrow\quad
\mathsf w_V\in\mathcal M_c,\\
V\text{ is row-isotropic}
&\quad\Longleftrightarrow\quad
\mathsf w_V\in\mathcal M_r.
\end{aligned}
$$
\end{lem}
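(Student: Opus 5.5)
We prove the first equivalence by a direct rank argument and the Plücker characterization by an explicit computation in the decomposition \eqref{decom2}.

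\emph{Step 1: isotropic planes.} Suppose $V\subset\{\det=0\}$. Then every nonzero element of $V$ has rank one, so it can be written as $x\otimes y$ with $x\in\mathsf C$ and $y\in\mathsf R$. Pick a basis $x_1\otimes y_1$, $x_2\otimes y_2$ of $V$. For every $s,t$ we have
$$
\det(s\,x_1\otimes y_1+t\,x_2\otimes y_2)=st\,(x_1\wedge x_2)(y_1\wedge y_2),
$$
because the $s^2$ and $t^2$ coefficients vanish (the basis elements have rank one). Hence $x_1\wedge x_2=0$ or $y_1\wedge y_2=0$.
\begin{itemize}[label={}]
\item If $x_1\wedge x_2=0$, we may take $x_1=x_2=x$. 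Then $V=x\otimes\operatorname{span}(y_1,y_2)=x\otimes\mathsf R$, so every matrix in $V$ has column space contained in $\mathbb R x$; that is, $V$ is column-isotropic.
\item If $y_1\wedge y_2=0$, the same argument shows $V=\mathsf C\otimes y$, so $V$ is row-isotropic.
\end{itemize}
Conversely, column- or row-isotropic matrices have rank at most one, so $\det$ vanishes on them.

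\emph{Step 2: Plücker vectors.} For $V=x\otimes\mathsf R$ with basis $x\otimes f_1$, $x\otimes f_2$, we have
$$
\mathsf w_V=(x\otimes f_1)\wedge(x\otimes f_2).
$$
Recall the standard identification $\wedge^2(\mathsf C\otimes\mathsf R)\cong \operatorname{Sym}^2\mathsf C\otimes\wedge^2\mathsf R\;\oplus\;\wedge^2\mathsf C\otimes\operatorname{Sym}^2\mathsf R$, given by
$$
(a\otimes b)\wedge(c\otimes d)\mapsto \tfrac12\bigl(ac\otimes(b\wedge d)+(a\wedge c)\otimes bd\bigr).
$$
Under this map, $\mathsf w_V\mapsto x^2\otimes(f_1\wedge f_2)\in\mathcal M_c$, because the $\wedge^2\mathsf C$ term contains $x\wedge x=0$. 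Similarly, $\mathsf C\otimes y$ maps into $\mathcal M_r$. Both $\mathcal M_c$ and $\mathcal M_r$ are $H$-invariant, and the orthogonal decomposition for the trace inner product agrees with this one, since both are the isotypic decomposition of $\wedge^2$ as a module over the compact subgroup $\SO(2)\times\SO(2)$. Hence the two forward implications hold.

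For the converses, we use decomposability. Suppose $\mathsf w_V\in\mathcal M_c$, so $\mathsf w_V=\sigma\otimes\omega$ with $\sigma\in\operatorname{Sym}^2\mathsf C$. Decomposability of $\mathsf w_V$ means the Plücker relation $\mathsf w_V\wedge\mathsf w_V=0$ holds. The map $\wedge^2\times\wedge^2\to\wedge^4\cong\mathbb R$ is a symmetric form. On $\mathcal M_c$ it restricts to a multiple of the discriminant of $\sigma$, and this multiple is nonzero because the form is nondegenerate of signature $(3,3)$ and $\mathcal M_c\perp\mathcal M_r$ for it by invariance. So $\mathsf w_V\wedge\mathsf w_V=0$ forces $\sigma=x^2$ to be a perfect square up to sign, and then $\mathsf w_V$ is the Plücker vector of $x\otimes\mathsf R$. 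Since a Plücker vector determines its plane, $V=x\otimes\mathsf R$ is column-isotropic. The row case is symmetric.

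\emph{Main obstacle.} The delicate point is the converse in Step 2: a general element $\sigma\otimes\omega$ of $\mathcal M_c$ need not be decomposable, so membership in $\mathcal M_c$ alone does not give a plane. The argument therefore relies on the Plücker quadric restricting to $\mathcal M_c$ as the discriminant quadric of $\operatorname{Sym}^2\mathsf C$. One way to check this is to evaluate $\mathsf w\wedge\mathsf w$ on the basis $e_1^2,\,e_1e_2,\,e_2^2$ of $\operatorname{Sym}^2\mathsf C$. A simpler alternative is to note that, for a genuine plane $V$, membership in $\mathcal M_c$ or $\mathcal M_r$ already forces $V\subset\{\det=0\}$. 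Indeed, $\det$ restricted to $V$ is computed by contracting $\mathsf w_V$ against the invariant bilinear form, and this contraction vanishes on the $\operatorname{Sym}^2\otimes\wedge^2$ components. Step 1 then identifies $V$ as column- or row-isotropic, and the forward direction of Step 2 decides which.
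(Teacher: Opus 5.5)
Your proof is correct in substance. The paper gives no argument for this lemma; it quotes it from the companion paper. However, the coordinate facts it uses later are the coordinate version of your Step~2: $\mathcal M_c=\operatorname{span}\{e_{12},e_{14}-e_{23},e_{34}\}$, $\mathcal M_r=\operatorname{span}\{e_{13},e_{14}+e_{23},e_{24}\}$, the Pl\"ucker vector $s^2e_{12}+st(e_{14}-e_{23})+t^2e_{34}$ of a column-isotropic plane (\Cref{prop:finite-isotropic-planes-d2}), and the Pl\"ucker relation $\alpha\gamma-\beta^2=\rho\tau-\sigma^2$ (\Cref{lem:d2-reconstruction-from-quasinull}). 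A decomposable vector in $\mathcal M_c$ has $\rho=\sigma=\tau=0$, hence $\alpha\gamma=\beta^2$, hence is a multiple of some $z(s,t)$. Your Schur-lemma formulation gets the same conclusion without coordinates: the wedge pairing is $H$-invariant, so $\mathcal M_c\perp\mathcal M_r$ for it, and its restriction to $\mathcal M_c$ is a nonzero multiple of the discriminant. Your factorization $\det(s\,x_1\otimes y_1+t\,x_2\otimes y_2)=st\,(x_1\wedge x_2)(y_1\wedge y_2)$ is also a clean way to obtain the first equivalence.

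Two side remarks are wrong as stated, though neither carries weight in the argument.

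\emph{The $K$-isotypic claim.} $\mathcal M_c$ and $\mathcal M_r$ are not separated by the $\SO(2)\times\SO(2)$-isotypic decomposition: each contains a $K$-fixed line, spanned by $e_{12}+e_{34}$ and $e_{13}+e_{24}$ respectively. The correct reason your abstract summand coincides with the paper's $\mathcal M_c$ is that $\mathcal M_c\not\cong\mathcal M_r$ as $H$-modules, so the decomposition into $H$-irreducibles is unique. Concretely, $\mathcal M_c$ is the fixed space of the right $\SL_2(\mathbb R)$-factor. Orthogonality for the trace form is not needed for this lemma, and it can be read off from the bases above.

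\emph{The ``simpler alternative.''} It does not work as phrased. The natural reading pairs $\mathsf w_V$ with itself via the form induced on $\wedge^2$ by the polarization of $\det$. That gives the Gram determinant of $\det|_V$, which vanishes whenever $\det|_V$ is merely degenerate. For example, on $V=\operatorname{span}\{E_{11},E_{12}+E_{21}\}$ one has $\det(aE_{11}+b(E_{12}+E_{21}))=-b^2$, which is degenerate but not identically zero. The pairing also does not vanish identically on $\mathcal M_c$; it is nonzero at $e_{12}+e_{34}$. A correct linear criterion uses the Hodge star $\star$ of the split form $\det$. Since $\star\mathsf w_V$ spans $\wedge^2V^\perp$, $V$ is totally isotropic iff $V=V^\perp$ iff $\star\mathsf w_V\in\mathbb R\mathsf w_V$. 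In signature $(2,2)$ one has $\star^2=1$ on $\wedge^2$, and its two $H$-invariant eigenspaces are exactly $\mathcal M_c$ and $\mathcal M_r$.
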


To bound the number of rational isotropic planes, we use the following
rationality obstruction from \cite[Proposition~3.3]{KimOh_det}.

\begin{lem}[Rationality obstruction]
\label{lem:rational-ruling-d2}
Let $\Delta<\M_2(\mathbb R)$ be a lattice. If
$\mathcal M_c\cap\wedge^2\Delta$ is a lattice in $\mathcal M_c$, then
$\Delta$ is determinant-rational. The same conclusion holds with
$\mathcal M_r$ in place of $\mathcal M_c$.
\end{lem}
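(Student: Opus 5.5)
The plan is to show that the hypothesis makes the subgroup $H$ defined over $\mathbb Q$ with respect to the rational structure $\Delta\otimes\mathbb Q$. Once that holds, the determinant form, which spans the space of $H$-invariant quadratic forms, will be proportional to a rational form. The case of $\mathcal M_r$ is symmetric, so I treat $\mathcal M_c$ only.

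First, equip $\wedge^2\M_2(\mathbb R)$ with the rational structure $\wedge^2\Delta\otimes\mathbb Q$. Consider the wedge pairing
$$\beta(w,w')=w\wedge w'\in\wedge^4\M_2(\mathbb R)\cong\mathbb R,$$
normalized so that a $\mathbb Z$-basis wedge of $\Delta$ maps to $1$. It is a nondegenerate symmetric bilinear form of signature $(3,3)$. It takes integer values on $\wedge^2\Delta$, so it is rational. It is also $H$-invariant. Since $\mathcal M_c$ and $\mathcal M_r$ are non-isomorphic irreducible $H$-modules, they are $\beta$-orthogonal. Hence $\mathcal M_r=\mathcal M_c^{\perp_\beta}$ and $\beta|_{\mathcal M_c}$ is nondegenerate. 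By hypothesis $\mathcal M_c$ is a rational subspace, and therefore so is its $\beta$-orthogonal complement $\mathcal M_r$.

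Second, let $\mathbf G=\SL(\Delta\otimes\mathbb Q)$, a $\mathbb Q$-group. Let $\mathbf S<\mathbf G$ be the stabilizer of the rational subspace $\mathcal M_c$ under the representation $\wedge^2$; it is a $\mathbb Q$-subgroup. I then identify $\mathbf S(\mathbb R)^\circ$ with $H$. The map $\SL_4\to\SO(\beta)\cong\SO(3,3)$ is the spin double cover. The stabilizer in $\SO(\beta)^\circ$ of the nondegenerate $3$-space $\mathcal M_c$ has identity component $\SO(\mathcal M_c)^\circ\times\SO(\mathcal M_r)^\circ$, which has dimension $6$. Also $H\subseteq\mathbf S(\mathbb R)$ has dimension $6$, and its image in $\SO(\mathcal M_c)\times\SO(\mathcal M_r)$ is the product of the two adjoint ($\operatorname{Sym}^2$) actions, which is onto the identity component. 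Comparing dimensions and using connectedness gives $\mathbf S(\mathbb R)^\circ=H$. The identity component of a $\mathbb Q$-group is again defined over $\mathbb Q$, so $H$ is the real points of a $\mathbb Q$-subgroup $\mathbf S^\circ$.

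Third, consider the space of $\mathbf S^\circ$-invariant quadratic forms on $\Delta\otimes\mathbb Q$. It is cut out by linear equations over $\mathbb Q$, because Zariski density of rational points in connected $\mathbb Q$-groups lets us use rational generators. Hence the real invariants are the real span of rational invariants. Over $\mathbb R$ this space is one-dimensional: $\M_2(\mathbb R)=\mathsf C\otimes\mathsf R$ is an absolutely irreducible $H$-module, so an invariant symmetric form is unique up to scalar by Schur's lemma, and $\det$ is one such form. It follows that $\det=\lambda q$ with $q$ rational on $\Delta$. Equivalently, $Q_{\Delta,\mathcal B}$ is proportional to a rational polynomial, which is exactly determinant-rationality.

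I expect the main obstacle to be the second step: checking carefully that the stabilizer of $\mathcal M_c$ has identity component exactly $H$ and not something larger. The dimension count via the spin cover should settle this. The remaining steps are standard descent facts about $\mathbb Q$-groups.
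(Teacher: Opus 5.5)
Your proof is correct, and it takes a genuinely different route from the paper. The paper does not prove this lemma; it imports it from \cite[Proposition~3.3]{KimOh_det}.

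The mechanism the paper itself exhibits is the explicit cofactor construction in \Cref{lem:rational-ruling-reconstruction-d2}. To a three-dimensional $U<\wedge^2\mathbb R^4$ one attaches the $3\times4$ matrix $L_U(x)$, whose signed cofactor vector has the form $Q_U(x)x$. Here $Q_{\mathcal M_c}$ is a multiple of $\det$, and $U\mapsto Q_U$ is $\GL_4$-equivariant. Taking $W=\mathcal M_c$ in that lemma (distance zero) gives the statement directly. An integral basis of $W_0=(\wedge^2 g_{\mathcal B}^{-1})\mathcal M_c$, which exists by hypothesis, makes $L_{W_0}$ integral. The resulting form therefore has rational coefficients and is proportional to $Q_{\Delta,\mathcal B}$.

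You replace this computation by descent. The $\mathbb Q$-stabilizer of $\mathcal M_c$ has real identity component $H$. Your dimension count in $\SO(\beta)$ is valid because $\beta|_{\mathcal M_c}$ is nondegenerate, and your orthogonality argument supplies that. The one-dimensional space of $H$-invariant quadratic forms is then defined over $\mathbb Q$.

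Your route is conceptual and needs no formula, so it transfers to any summand whose stabilizer is $H$ up to finite index. The cofactor route is effective: it gives the height bound $\operatorname{ht}(Q_W)\ll\|\mathsf w_{\wedge^2\Delta,W}\|^3$ and stability under perturbation of $W$. That is exactly what \Cref{prop:avoidance0} needs, and the descent argument cannot provide it.

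Two minor points:
\begin{itemize}
\item $H$ is the identity component, in the real topology, of $\mathbf S^\circ(\mathbb R)$, not necessarily all of it. This is harmless: $H$ is Zariski dense in $\mathbf S^\circ$, so the two groups have the same invariant forms.
\item The rationality of $\mathcal M_r$ in your first step is never used. Only the nondegeneracy of $\beta|_{\mathcal M_c}$ matters.
\end{itemize}
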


This obstruction yields the required finiteness statement.

\begin{prop}[Finiteness of rational isotropic planes]
\label{prop:finite-isotropic-planes-d2}
Let $\Lambda<\M_2(\mathbb R)$ be a lattice that is not
determinant-rational. Then $\Lambda$ has at most two
$\Lambda$-rational column-isotropic planes and at most two
$\Lambda$-rational row-isotropic planes. Consequently,
$$
\#\mathcal I_\Lambda\le4.
$$
\end{prop}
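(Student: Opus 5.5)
The plan is to argue by contradiction and reduce directly to the rationality obstruction, \Cref{lem:rational-ruling-d2}. By symmetry (transposition interchanges rows and columns, and $\mathcal M_c$ and $\mathcal M_r$), it suffices to treat column-isotropic planes. The bound $\#\mathcal I_\Lambda\le4$ then follows at once: by \Cref{lem:isotropic-plucker-d2}, every two-plane in $\{\det=0\}$ is column-isotropic or row-isotropic.

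\textbf{Step 1: structure of column-isotropic planes.} A two-plane $V$ is column-isotropic exactly when $V=\ell\otimes\mathsf R$ for a line $\ell=\mathbb R c\subset\mathsf C$. Indeed, the columns lie in $\ell$, and $\ell\otimes\mathsf R$ is two-dimensional, so $V$ equals it. Taking the basis $c\otimes r_1,\,c\otimes r_2$ shows that its Pl\"ucker vector is, up to a nonzero scalar,
\[
(c\otimes r_1)\wedge(c\otimes r_2)\;\longmapsto\; c^2\otimes(r_1\wedge r_2)\in\operatorname{Sym}^2(\mathsf C)\otimes\wedge^2\mathsf R=\mathcal M_c .
\]
This is consistent with \Cref{lem:isotropic-plucker-d2}. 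Consequently, distinct column-isotropic planes correspond to distinct lines $\ell_i=\mathbb R c_i$, and their Pl\"ucker vectors are nonzero multiples of $c_i^2\otimes\omega$ for a fixed generator $\omega$ of $\wedge^2\mathsf R$.

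\textbf{Step 2: three planes span $\mathcal M_c$.} Suppose $V_1,V_2,V_3$ are three distinct $\Lambda$-rational column-isotropic planes. The squares $c_1^2,c_2^2,c_3^2$ of three pairwise non-proportional vectors in $\mathbb R^2$ are linearly independent in the three-dimensional space $\operatorname{Sym}^2(\mathbb R^2)$. To check this, normalize so that $c_i=(1,s_i)$ (or handle the vector $(0,1)$ separately). The coordinate matrix of the $c_i^2$ is then a Vandermonde matrix in the distinct $s_i$, hence invertible. Therefore the three Pl\"ucker vectors span $\mathcal M_c$.

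\textbf{Step 3: a lattice in $\mathcal M_c$.} Each primitive Pl\"ucker vector $\mathsf w_{\Lambda,V_i}=v_1\wedge v_2$, with $v_1,v_2\in\Lambda$, lies in $\wedge^2\Lambda$, which is a lattice in $\wedge^2\M_2(\mathbb R)$. The intersection $\mathcal M_c\cap\wedge^2\Lambda$ is therefore a discrete subgroup of $\mathcal M_c$. By Step 2 it contains three linearly independent vectors, so it spans $\mathcal M_c$ and is a lattice in $\mathcal M_c$. \Cref{lem:rational-ruling-d2} then forces $\Lambda$ to be determinant-rational, which is a contradiction.

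The only step requiring real care is Step 1, namely the identification of the Pl\"ucker vector of $\ell\otimes\mathsf R$ with $c^2\otimes\omega$ inside the decomposition \eqref{decom2}. Once that is in place, the argument reduces to the Vandermonde independence and the elementary fact that a discrete subgroup containing a spanning set is a lattice.
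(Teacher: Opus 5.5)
Your proposal is correct and follows the paper's proof essentially step for step. You identify the Plücker vector of a column-isotropic plane as an element of $\mathcal M_c$ (the paper writes it explicitly as $s^2e_{12}+st(e_{14}-e_{23})+t^2e_{34}$), use a Vandermonde argument to show three distinct lines give a basis of $\mathcal M_c$, and conclude that $\mathcal M_c\cap\wedge^2\Lambda$ is a lattice in $\mathcal M_c$, contradicting \Cref{lem:rational-ruling-d2}; your transposition remark for the row case and the note that a spanning discrete subgroup is a lattice only make explicit what the paper leaves implicit.
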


\begin{proof}
We prove the assertion for column-isotropic planes; the row-isotropic case
is identical. Put $e_{ij}=e_i\wedge e_j$. If the common column line is
spanned by $(s,t)^{\intercal}$, then the corresponding column-isotropic
plane is spanned by
$
se_1+te_3$ and $se_2+te_4,
$
and hence has a Pl\"ucker vector proportional to
$$
s^2e_{12}+st(e_{14}-e_{23})+t^2e_{34}.
$$
Any three distinct lines in $\mathbb R^2$ give
linearly independent vectors in $\mathcal M_c$. Indeed, after writing
$x=t/s$ whenever $s\ne0$, this is the usual Vandermonde determinant for
the vectors $(1,x,x^2)$; the case $s=0$ is immediate.
Suppose that $\Lambda$ has three distinct $\Lambda$-rational
column-isotropic planes. Their primitive Pl\"ucker vectors lie in
$\wedge^2\Lambda$ and, by the preceding calculation, form a basis of
$\mathcal M_c$. It follows that
$\mathcal M_c\cap\wedge^2\Lambda$ is a lattice in $\mathcal M_c$.
\Cref{lem:rational-ruling-d2} then implies that $\Lambda$ is
determinant-rational, contrary to the hypothesis. Thus $\Lambda$ has at
most two $\Lambda$-rational column-isotropic planes. The row-isotropic
case is identical.
\end{proof}

\begin{remark}
In the language of quadratic forms, the proposition says that a form of
signature $(2,2)$ that is not proportional to a rational form has at most
four rational two-dimensional totally isotropic subspaces; compare
\cite[Lemma~10.3]{eskin-margulis-mozes:2005}.
\end{remark}

\section{The Diophantine condition and the modified height}
\label{sec:diophantine-height}
The contraction in exterior degree $2$ becomes weaker when the
Pl\"ucker vector of a rational two-plane lies very close to one of the
exceptional summands $\mathcal M_c$ or $\mathcal M_r$. To control this, we
impose a Diophantine condition on approximation by rational split
quadratic forms. We then derive the finite-scale bound needed later and
define a modified height by omitting the corresponding quasi-null planes
from the degree-$2$ term.

Recall the determinant form $Q_{\Lambda,\mathcal B}$ defined in
\eqref{eq:FLambdadef}. Let $\mathscr S_{\mathbb Q}$ denote the set of
projective classes of nonzero rational quadratic forms that are
$\mathbb Q$-equivalent to $x_1x_4-x_2x_3$. We refer to these classes as
\emph{rational split forms}.

Fix the Euclidean norm on the ten-dimensional coefficient space of
quadratic forms on $\mathbb R^4$. For nonzero real quadratic forms $Q$ and
$Q'$, define
$$
\operatorname{dist}([Q],[Q'])
=
\min_{\varepsilon\in\{\pm1\}}
\left\|
\frac{Q}{\|Q\|}
-
\varepsilon\frac{Q'}{\|Q'\|}
\right\|.
$$
If $Q_0$ has rational coefficients, let $\operatorname{ht}(Q_0)$ be the
Euclidean norm of the primitive integral coefficient vector spanning the
line $\mathbb Q Q_0$. 

\begin{defn}[Split-Diophantine lattices]\label{def:diophantine}
Let $\eta>0$ and $M>1$. A lattice $\Lambda<\M_2(\mathbb R)$ is
\emph{$(\eta,M)$-split-Diophantine} if, for some $\mathbb Z$-basis
$\mathcal B$ of $\Lambda$,
$$
\operatorname{dist}([Q_{\Lambda,\mathcal B}],[Q_0])
\ge
\eta\,\operatorname{ht}(Q_0)^{-M}
\qquad
([Q_0]\in\mathscr S_{\mathbb Q}).
$$
We call $\Lambda$ \emph{Diophantine} if it is
$(\eta,M)$-split-Diophantine for some $\eta>0$ and $M>1$.
\end{defn}
This property is independent of the choice of $\mathcal B$, after
adjusting $\eta$. Accordingly, a nondegenerate quadratic form $Q$ of
signature $(2,2)$ on $\mathbb R^4$ is called
\emph{$(\eta,M)$-Diophantine} if
\begin{equation}\label{dio}
\operatorname{dist}([Q],[Q_0])
\ge
\eta\,\operatorname{ht}(Q_0)^{-M}
\qquad
([Q_0]\in\mathscr S_{\mathbb Q}),
\end{equation}
and \emph{Diophantine} if this holds for some $\eta>0$ and $M>1$.
If $Q(x)=\det(gx)$ for some
$g:\mathbb R^4\to\M_2(\mathbb R)$, then $Q$ is
Diophantine if and only if $g\mathbb Z^4$ is Diophantine. This is
the split-form version of the non-EWAS condition in
\cite[Definition~1.2]{eskin-margulis-mozes:2005}. 

We next translate this condition into a finite-scale statement about
rational two-planes. For a lattice $\Lambda<\M_2(\mathbb R)$, $R\ge1$, and
$0<\varepsilon<1$, let $\mathcal Q_\Lambda(R,\varepsilon)$ be the
collection of $\Lambda$-rational two-planes $V$ whose primitive Pl\"ucker
vector $w=\mathsf w_{\Lambda,V}$ satisfies
$$
\|w\|\le R,
\qquad
\min\{\|w-\pi_cw\|,\|w-\pi_rw\|\}\le\varepsilon.
$$
Thus $\mathcal Q_\Lambda(R,\varepsilon)$ consists of rational two-planes of
Pl\"ucker height at most $R$ that lie within $\varepsilon$ of one of the
two exceptional summands.

Among any five such planes, at least three lie close to the same
exceptional summand. If the approximation is sufficiently strong relative
to their height, these three planes are pairwise transverse, and we can
reconstruct from them a rational split quadratic form up to scale. We
only need polynomial dependence on the height; the exponent $18$ below
has no particular significance.

\begin{lem}[Reconstruction from five nearly isotropic planes]
\label{lem:d2-reconstruction-from-quasinull}
Let $\Lambda<\M_2(\mathbb R)$ be a lattice and let $\mathcal B$ be a
$\mathbb Z$-basis of $\Lambda$. There exists $C=C(\mathcal B)>0$ such
that, for every $R\ge1$ and $0<\varepsilon<1$, if
$\#\mathcal Q_\Lambda(R,\varepsilon)\ge5$, then there exists a rational
split quadratic form $P$ satisfying
\begin{equation}\label{eq:rational-split-reconstruction}
\operatorname{ht}(P)\le CR^{18},
\qquad
\operatorname{dist}([Q_{\Lambda,\mathcal B}],[P])
\le CR^{18}\varepsilon.
\end{equation}
\end{lem}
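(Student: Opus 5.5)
By pigeonhole, among five planes in $\mathcal Q_\Lambda(R,\varepsilon)$ at least three, say $V_1,V_2,V_3$ with primitive Plücker vectors $w_1,w_2,w_3\in\wedge^2\Lambda$, satisfy $\|w_i-\pi_cw_i\|\le\varepsilon$ (or the same with $\pi_r$; the two cases are symmetric). I work in integral coordinates. Write $w_i$ in the basis $\wedge^2\mathcal B$ of $\wedge^2\Lambda$. Then $w_i$ is an integer vector of size $\ll_{\mathcal B}R$. The summand $\mathcal M_c$ is a real $3$-dimensional subspace of $\wedge^2\mathbb R^4$ in these coordinates, described by the real basis $\mathcal B$. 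The aim is to produce the rational $3$-space $W=\operatorname{span}_{\mathbb Q}(w_1,w_2,w_3)$. I will show that $W$ is $\ll R^{O(1)}\varepsilon$-close to $\mathcal M_c$ in the Grassmannian, and then extract from $W$ a rational split form $P$ close to $Q_{\Lambda,\mathcal B}$.

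\emph{Step 1: transversality.} I need $w_1,w_2,w_3$ to be quantitatively linearly independent. Since they are distinct primitive integral vectors, $w_1\wedge w_2\wedge w_3$ is an integer vector in $\wedge^3\wedge^2\mathbb Z^4$, so it is either $0$ or has norm $\ge1$.

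To rule out $0$, consider the case where the $w_i$ are dependent. Suppose first that the span has dimension $2$. Then $\operatorname{span}(w_1,w_2)$ is a rational $2$-plane in $\wedge^2\mathbb R^4$ containing three distinct decomposable lines. A pencil of bivectors with three decomposable members consists entirely of decomposable bivectors. So it corresponds to planes sharing a common line, or lying in a common $3$-space. For nearly column-isotropic planes I expect this configuration to be incompatible with near-membership in $\mathcal M_c$. Heuristically, the columns of $V_i$ all lie near a line $\ell_i$, and three distinct $V_i$ in a pencil of this type would force a big component outside $\mathcal M_c$. I would try to quantify this and show that the angle deficiency is $\gg R^{-O(1)}$ unless $\varepsilon\gg R^{-O(1)}$. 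If that fails, the lemma is trivial by enlarging $C$, since $\operatorname{dist}\le 2$ always.

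Otherwise $\|w_1\wedge w_2\wedge w_3\|\ge1$ while $\|w_i\|\ll R$. By Cramer's rule, the real $3$-space $W_{\mathbb R}$ spanned by the $w_i$ is then within $\ll R^{O(1)}\varepsilon$ of $\mathcal M_c$. The reason is that each $w_i$ is $\varepsilon$-close to $\mathcal M_c$ and the lower bound on the volume prevents loss beyond a factor $R^{3}$.

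\emph{Step 2: reconstruction of $P$.} The map $[Q]\mapsto$ (the $3$-dimensional summand $\mathcal M_c(Q)$ of the ruling) is an algebraic correspondence defined over $\mathbb Q$. Concretely, for the split form $Q_0=x_1x_4-x_2x_3$, $\mathcal M_c$ is the span of the Plücker vectors of one ruling of isotropic planes. Conversely, $Q$ is recovered up to scale from the $3$-space, as the quadric whose ruling Plücker vectors lie in that space. In Plücker terms, $Q$ is determined by the Hodge-type pairing restricted to the $3$-space.

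Inverting this relation, the rational $3$-space $W$ determines a rational quadratic form $P$ whose coefficients are polynomials, of bounded degree, in the Plücker coordinates $w_1\wedge w_2\wedge w_3$. These coordinates have height $\ll R^3$, so $\operatorname{ht}(P)\ll R^{O(1)}$, which can be arranged to be $\le CR^{18}$. Since the correspondence is locally Lipschitz, and the split locus is closed and smooth, we get $\operatorname{dist}([P],[Q_{\Lambda,\mathcal B}])\ll R^{O(1)}\varepsilon$.

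I also need $P$ to be $\mathbb Q$-split. For this I take $P$ to be the form whose isotropic ruling is spanned by $V_1,V_2,V_3$, or by their rational perturbations. A nondegenerate quaternary form containing a rational totally isotropic $2$-plane is $\mathbb Q$-equivalent to $x_1x_4-x_2x_3$, so $P$ is split. The nondegeneracy of $P$ follows from its closeness to $Q_{\Lambda,\mathcal B}$ when $R^{O(1)}\varepsilon$ is small; otherwise the statement is trivial.

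\emph{Main obstacle.} The key difficulty is Step 1, the quantitative transversality of the three planes. The degenerate pencil configurations, together with the need for $V_i$ itself to be a rational isotropic plane of $P$, require care. I would handle them by perturbing each $w_i$ to the nearest point of $\mathcal M_c$ and reading off the lines $\ell_i$. The remaining task is then to bound the Vandermonde-type determinant of the $\ell_i$ from below by $R^{-O(1)}$.
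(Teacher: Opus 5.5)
Your route differs from the paper's and is viable in principle: form the rational $3$-space $W=\operatorname{span}(w_1,w_2,w_3)\subset\wedge^2\M_2(\mathbb R)$, show it is close to $\mathcal M_c$, and recover $P$ as the quadric one of whose rulings spans $W$. But the proposal has a genuine gap exactly at the step you call the main obstacle. Nothing in it proves that $w_1,w_2,w_3$ are linearly independent. ``I expect this configuration to be incompatible'' is not an argument. Your fallback, a lower bound on the Vandermonde determinant of the lines $\ell_i$, lacks its essential input: a lower bound on the pairwise separation of the $\ell_i$.

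That separation must come from integrality of \emph{pairs}. The vectors $w_i,w_j$ are non-proportional integral vectors of norm $\ll R$, so some $2\times2$ minor of their coordinates is a nonzero integer, and hence $\min_{\pm}\|\widehat w_i\pm\widehat w_j\|\gg R^{-2}$. Also, $\pi_cw_i$ need not be decomposable, so you cannot read $\ell_i$ off it directly. The paper uses the Pl\"ucker relation $\alpha\gamma-\beta^2=\rho\tau-\sigma^2$ to move it by $O(\varepsilon^2)$ to a unit decomposable $z_i\in\mathcal M_c$ with $\|\widehat w_i-z_i\|\ll\varepsilon$. The identity $|z(s,t)\wedge z(s',t')|=(st'-ts')^2=\tfrac12\min_{\pm}\|z(s,t)\pm z(s',t')\|^2$ then gives $|z_i\wedge z_j|\gg R^{-4}$, hence $V_i\cap V_j=\{0\}$ once $\varepsilon\le cR^{-4}$. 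Pairwise transversality makes the Gram matrix of the wedge pairing on $w_1,w_2,w_3$ nonsingular (zero diagonal, nonzero off-diagonal entries), which is the independence you need.

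Inside your own framework there is a shorter fix. In the pencil case the wedge pairing vanishes identically on $\Pi=\operatorname{span}(w_1,w_2)$. Since $\|w_1\wedge w_2\|\gg1$, $\Pi$ is an $O(R\varepsilon)$-perturbation of a $2$-plane in $\mathcal M_c$. On $\mathcal M_c$ the pairing is the nondegenerate ternary form $2(\alpha\gamma-\beta^2)$, which has no totally isotropic $2$-plane, so compactness gives a contradiction once $R\varepsilon$ is small.

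Step 2 is also only asserted. To ``invert the correspondence'' you must show three things:
\begin{itemize}
\item a rational $3$-space near $\mathcal M_c$ is the span of a ruling of some quadric;
\item that quadric is given by integral polynomials of explicitly bounded degree in a rational basis of $W$ (otherwise $R^{O(1)}$ cannot simply be ``arranged'' to be $R^{18}$);
\item it vanishes on each $V_i$.
\end{itemize}
This is exactly what the paper's later cofactor argument in \Cref{lem:rational-ruling-reconstruction-d2} provides. Applied to your $W$, which contains the decomposable $w_1$ and whose primitive Pl\"ucker vector has norm at most $\|w_1\wedge w_2\wedge w_3\|\ll R^3$, it would give $\operatorname{ht}(P)\ll R^9$ and $\operatorname{dist}([Q_{\Lambda,\mathcal B}],[P])\ll\operatorname{dist}(W,\mathcal M_c)\ll R^2\varepsilon$. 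That is better than required.

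The paper never forms $W$. Once the $V_i$ are pairwise transverse, it takes $P$ to be the unique form, up to scale, vanishing on $V_1,V_2,V_3$. It computes $P$ as the kernel of a $9\times10$ integer matrix with entries $O(R^2)$, built from Minkowski-reduced bases of $V_i\cap\mathbb Z^4$. The $9\times9$ minors give height $\ll R^{18}$, and $\sigma_9\gg R^{-16}$ gives distance $\ll R^{18}\varepsilon$. On either route, the transversality step you left open is the heart of the proof.
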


\begin{proof}
 After increasing $C$, we
may assume that
$
\varepsilon\le cR^{-18}
$
for a sufficiently small constant $c>0$; otherwise the conclusion follows
by taking any fixed rational split form.

Choose five distinct planes in $\mathcal Q_\Lambda(R,\varepsilon)$. By the
pigeonhole principle, three of them, denoted by $V_1,V_2,V_3$, have
primitive Pl\"ucker vectors $w_1,w_2,w_3$ satisfying
$$
\|w_i-\pi_cw_i\|\le\varepsilon
\qquad
(1\le i\le3),
$$
or the analogous inequalities with $\pi_r$. We treat the first case; the
second is identical.

Put $\widehat w_i=w_i/\|w_i\|$. Since $\wedge^2\Lambda$ is a fixed lattice,
$\|w_i\|\gg1$. We claim that there is a unit Pl\"ucker vector
$z_i\in\mathcal M_c$ such that
\begin{equation}\label{eq:near-exact-ruling}
\|\widehat w_i-z_i\|\ll\varepsilon.
\end{equation}
To see this, let
$
e_1=E_{11}, e_2=E_{12},
e_3=E_{21}, e_4=E_{22},
e_{ij}=e_i\wedge e_j,
$
and write
$$
\widehat w_i
=
\alpha e_{12}
+\beta(e_{14}-e_{23})
+\gamma e_{34}
+\rho e_{13}
+\sigma(e_{14}+e_{23})
+\tau e_{24}.
$$
The Pl\"ucker relation gives
$$
\alpha\gamma-\beta^2=\rho\tau-\sigma^2.
$$
The last three coefficients are $O(\varepsilon)$, and hence
$|\alpha\gamma-\beta^2|\ll\varepsilon^2$. Moreover, one of
$|\alpha|,|\beta|,|\gamma|$ is bounded below. If $|\alpha|\gg1$, replace
$\gamma$ by $\beta^2/\alpha$; if $|\gamma|\gg1$, replace $\alpha$ by
$\beta^2/\gamma$. In the remaining case, $|\beta|\gg1$ and
$\alpha\gamma>0$, so we replace $\beta$ by
$\operatorname{sgn}(\beta)\sqrt{\alpha\gamma}$. In each case the change is
$O(\varepsilon^2)$, and the resulting nonzero vector in $\mathcal M_c$ is
a Pl\"ucker vector. Normalizing it proves
\eqref{eq:near-exact-ruling}.

We next show that the planes $V_i$ are pairwise transverse. In the integral
coordinates induced by $\mathcal B$, the vectors $w_i$ are primitive
integral vectors of norm $O(R)$. Since $V_i\ne V_j$, the vectors $w_i$ and
$w_j$ are not proportional. Some $2\times2$ minor formed from their
coordinates is therefore a nonzero integer, and hence
\begin{equation}\label{eq:plucker-separation}
\min_{\pm}\|\widehat w_i\pm\widehat w_j\|
\gg R^{-2}
\qquad
(i\ne j).
\end{equation}
By \eqref{eq:near-exact-ruling}, the same lower bound, up to a fixed
constant, holds with $z_i,z_j$ in place of
$\widehat w_i,\widehat w_j$.

Every unit Pl\"ucker vector in $\mathcal M_c$ has, up to sign, the form
$$
z(s,t)
=
(se_1+te_3)\wedge(se_2+te_4),
\qquad
s^2+t^2=1.
$$
A direct calculation gives
$$
|z(s,t)\wedge z(s',t')|
=
(st'-ts')^2
=
\frac12\min_{\pm}\|z(s,t)\pm z(s',t')\|^2.
$$
It follows that $|z_i\wedge z_j|\gg R^{-4}$. Since
$\varepsilon\ll R^{-18}$, equation \eqref{eq:near-exact-ruling} now implies
that $w_i\wedge w_j\ne0$. Thus
\begin{equation}\label{eq:pairwise-transverse}
V_i\cap V_j=\{0\}
\qquad
(i\ne j).
\end{equation}

Let $g_{\mathcal B}:\mathbb R^4\to\M_2(\mathbb R)$ send the standard basis
to $\mathcal B$. Pull back the planes $V_i$ and the planes represented by
$z_i$ under $g_{\mathcal B}$, and retain the same notation.  In these
coordinates, $\Lambda=\mathbb Z^4$ and
$$
Q:=\det\circ g_{\mathcal B}=Q_{\Lambda,\mathcal B}.
$$
By Minkowski's second theorem, for each $i$ there are linearly independent
vectors $u_i,v_i\in V_i\cap\mathbb Z^4$ satisfying
$$
\|u_i\|,\|v_i\|\ll R.
$$
Let $L$ be the $9\times10$ integer matrix representing the nine linear
functionals
$$
P\mapsto P(u_i),\quad
P(v_i),\quad
P(u_i+v_i)-P(u_i)-P(v_i),
\qquad
1\le i\le3,
$$
on the ten-dimensional space $\operatorname{Sym}^2((\mathbb R^4)^*)$ of quadratic forms on $\mathbb R^4$. Then
$$
\ker L
=
\{P:P|_{V_1}=P|_{V_2}=P|_{V_3}=0\},
$$
and every entry of $L$ is $O(R^2)$. We claim that $\ker L$ is one-dimensional. By
\eqref{eq:pairwise-transverse}, we have
$\mathbb R^4=V_1\oplus V_2$, and $V_3$ is the graph of an invertible linear
map $T:V_1\to V_2$. A quadratic form vanishing on both $V_1$ and $V_2$ has
the form
$$
P(x+y)=2\beta(x,y)
\qquad
(x\in V_1,\ y\in V_2)
$$
for some bilinear form $\beta:V_1\times V_2\to\mathbb R$. It also vanishes
on $V_3$ precisely when $\beta(x,Tx)=0$ for every $x\in V_1$, or
equivalently when the form
$$
(x,y)\mapsto\beta(x,Ty)
$$
is alternating. Since the space of alternating forms on the
two-dimensional space $V_1$ is one-dimensional, $\dim\ker L=1$.
The rational line $\ker L$ is generated by a nondegenerate rational
split form. Let $Q_0$ be its primitive integral generator.
 Its
coefficients are, up to a common divisor, the signed $9\times9$ minors of
$L$. Since the entries of $L$ are $O(R^2)$,
\begin{equation}\label{eq:Q0-height}
\operatorname{ht}(Q_0)\ll R^{18}.
\end{equation}

It remains to compare $Q_0$ with $Q$. Let $W_i$ be the pulled-back
isotropic plane represented by $z_i$. The distance between the orthogonal
projections onto two planes is controlled by the distance between their
normalized Pl\"ucker vectors. Hence, if $P_U$ denotes orthogonal projection
onto $U$,
$$
\|P_{V_i}-P_{W_i}\|_{\operatorname{op}}
\ll
\|\widehat w_i-z_i\|
\ll\varepsilon.
$$
Since $Q$ vanishes on $W_i$, it follows that
$
\left|\frac{Q(x)}{\|Q\|}\right|
\ll
\varepsilon\|x\|^2$ for $x\in V_i$.
Applying this estimate to $u_i$, $v_i$, and $u_i+v_i$ gives
\begin{equation}\label{eq:LQ-small}
\left\|
L\left(\tfrac{Q}{\|Q\|}\right)
\right\|
\ll
R^2\varepsilon.
\end{equation}

Let $\sigma_1\ge\cdots\ge\sigma_9>0$ be the nonzero singular values of
$L$. For each set $I$ of nine columns, let $L_I$ be the corresponding
$9\times9$ submatrix. The Cauchy--Binet formula gives
$$
\det(LL^{\mathsf T})
=
\sum_{|I|=9}\det(L_I)^2.
$$
Since $L$ has rank $9$, one of these minors is a nonzero integer. Therefore
$$
1
\le
\det(LL^{\mathsf T})
=
\prod_{j=1}^9\sigma_j^2.
$$
As $\sigma_j\ll R^2$, we obtain $\sigma_9\gg R^{-16}$. It follows from
\eqref{eq:LQ-small} that
$$
\operatorname{dist}
\left(
\tfrac{Q}{\|Q\|},\ker L
\right)
\le
\sigma_9^{-1}
\left\|
L\left(\tfrac{Q}{\|Q\|}\right)
\right\|
\ll
R^{18}\varepsilon.
$$
Since $\ker L=\mathbb RQ_0$, this gives
$
\operatorname{dist}([Q],[Q_0])
\ll
R^{18}\varepsilon.
$
Together with \eqref{eq:Q0-height}, this proves the lemma.
\end{proof}

The reconstruction lemma converts the presence of several nearly isotropic
rational planes into an exceptionally good approximation of the determinant
form by a rational split form. The Diophantine condition therefore gives
the following uniform bound.

\begin{lem}[Finite-scale consequence of the Diophantine condition]
\label{lem:split-diophantine-implies-fs}
If $\Lambda$ is Diophantine, then there exist $0<\eta<1$ and $M>1$ such that
\begin{equation}\label{fs}
\#\mathcal Q_\Lambda(R,\eta R^{-M})\le4
\qquad
(R\ge1).
\end{equation}
\end{lem}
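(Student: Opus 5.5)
The plan is to deduce the bound \eqref{fs} directly from \Cref{lem:d2-reconstruction-from-quasinull} by choosing $M$ large and $\eta$ small. Fix a $\mathbb Z$-basis $\mathcal B$ for which $\Lambda$ is $(\eta_0,M_0)$-split-Diophantine, and let $C=C(\mathcal B)$ be the constant of \Cref{lem:d2-reconstruction-from-quasinull}. The parameters will be
$$
M=18M_0+18,\qquad \eta<\min\{1,\ \eta_0C^{-M_0-1}\},
$$
with $\eta$ possibly shrunk further at the end.

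Suppose, for contradiction, that $\#\mathcal Q_\Lambda(R,\eta R^{-M})\ge5$ for some $R\ge1$. Set $\varepsilon=\eta R^{-M}$; since $\eta<1$ and $R\ge1$, we have $0<\varepsilon<1$. \Cref{lem:d2-reconstruction-from-quasinull} then provides a rational split form $P$ with
$$
\operatorname{ht}(P)\le CR^{18}
\qquad\text{and}\qquad
\operatorname{dist}([Q_{\Lambda,\mathcal B}],[P])\le C\eta R^{18-M}.
$$
The Diophantine condition applied to $[P]\in\mathscr S_{\mathbb Q}$ gives the lower bound
$$
\operatorname{dist}([Q_{\Lambda,\mathcal B}],[P])\ge\eta_0\operatorname{ht}(P)^{-M_0}\ge\eta_0C^{-M_0}R^{-18M_0}.
$$
Combining the two bounds yields
$$
\eta_0C^{-M_0}R^{-18M_0}\le C\eta R^{18-M}=C\eta R^{-18M_0}.
$$
This says $\eta\ge\eta_0C^{-M_0-1}$, which contradicts the choice of $\eta$. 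Since $R\ge1$, the powers of $R$ cancel exactly, so the argument is uniform in $R$; this uniformity is precisely what \eqref{fs} requires.

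The main point needing care is the bookkeeping of constants: $C$ depends only on $\mathcal B$, and neither $C$ nor $(\eta_0,M_0)$ may depend on $R$ or $\varepsilon$. The remark after \Cref{def:diophantine} already lets us use one fixed basis, so the same $\mathcal B$ can be used in both the Diophantine condition and the reconstruction lemma. Note also that the family $\mathcal Q_\Lambda$ is defined intrinsically, with norms taken in $\wedge^2\M_2(\mathbb R)$, so it does not depend on the choice of $\mathcal B$. There is no real obstacle beyond this.
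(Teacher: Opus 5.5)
Your proof is correct and matches the paper's argument: apply \Cref{lem:d2-reconstruction-from-quasinull} with $\varepsilon=\eta R^{-M}$ and contradict the split-Diophantine lower bound $\eta_0\operatorname{ht}(P)^{-M_0}\ge\eta_0C^{-M_0}R^{-18M_0}$. The only differences are minor. You take $M=18(M_0+1)$ exactly, so the powers of $R$ cancel, whereas the paper takes $M>18(M_0+1)$ and uses $R\ge1$. You also check explicitly that $\varepsilon<1$ and that the same basis $\mathcal B$ serves for both the Diophantine condition and the reconstruction lemma, which the paper leaves implicit.
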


\begin{proof}
Fix a basis $\mathcal B$ and write $Q=Q_{\Lambda,\mathcal B}$. Suppose that
$\Lambda$ is $(\eta_0,M_0)$-split-Diophantine, and let $C$ be the constant
in \Cref{lem:d2-reconstruction-from-quasinull}. Choose
$M>18(M_0+1),$
and then choose $\eta>0$ sufficiently small that
$$
C\eta R^{18-M}
<
\eta_0C^{-M_0}R^{-18M_0}
\qquad
(R\ge1).
$$
If \eqref{fs} failed, then
\Cref{lem:d2-reconstruction-from-quasinull}, applied with
$\varepsilon=\eta R^{-M}$, would give a rational split form $Q_0$ satisfying
$
\operatorname{ht}(Q_0)\le CR^{18}
$
and
$$
\operatorname{dist}([Q],[Q_0])
\le
C\eta R^{18-M}
<
\eta_0C^{-M_0}R^{-18M_0}
\le
\eta_0\operatorname{ht}(Q_0)^{-M_0}.
$$
This contradicts the split-Diophantine condition.
\end{proof}

We now define the set of degree-$2$ vectors that will be omitted from the
modified height. For fixed $0<\eta<1$ and $M>1$, set
\begin{equation}\label{eq:d2-quasinull-cone}
\mathcal H_{\eta,M}
=
\left\{
0\ne w\in\wedge^2\M_2(\mathbb R):
\min\{\|w-\pi_cw\|,\|w-\pi_rw\|\}
\le
\eta\|w\|^{-M}
\right\}.
\end{equation}
We call $\mathcal H_{\eta,M}$ the \emph{quasi-null set}. A
$\Delta$-rational two-plane is quasi-null precisely when its primitive
Pl\"ucker vector belongs to $\mathcal H_{\eta,M}$.

\begin{defn}[Modified height]\label{def:modified-height-d2}
For $h\in H$ and $\Delta\in X$, define
$$
\widehat\alpha_{\eta,M}(h;\Delta)
=
\max\left\{
1,
\alpha_1(h\Delta),
\alpha_3(h\Delta),
\sup_{\substack{V\text{ a $\Delta$-rational two-plane}\\
\mathsf w_{\Delta,V}\notin\mathcal H_{\eta,M}}}
\|h\mathsf w_{\Delta,V}\|^{-1}
\right\}.
$$
The supremum over the empty set is understood to be zero.
\end{defn}

\section{Local contraction estimates and the critical weight}
\label{sec:local-d2}
We establish local contraction estimates for the action of the expanding
horospherical subgroup on the exterior powers of $\M_2(\mathbb R)$. In
degrees $1$ and $3$, we use exponents $1<\beta<2$. Degree $2$ is
critical: contraction holds for $0<\beta<1$, while the endpoint $\beta=1$
gives only a uniform bound. At this endpoint, the contraction becomes
weaker as a Pl\"ucker vector approaches $\mathcal M_c$ or $\mathcal M_r$.
We therefore introduce a weight measuring the distances from these two
summands.

We recall only the one-factor restrictions of the decompositions
\eqref{decom}--\eqref{decom2}. Under the left $\SL_2(\mathbb R)$-factor,
$\wedge^1\M_2(\mathbb R)$ and $\wedge^3\M_2(\mathbb R)$ restrict to
$\mathsf C^{\oplus2}$ and $(\mathsf C^*)^{\oplus2}$, respectively;
$\mathcal M_c$ restricts to $\operatorname{Sym}^2(\mathsf C)$, while
$\mathcal M_r$ is trivial. The analogous statements hold for the right
factor, with $\mathsf C$ and $\mathsf R$, and $\mathcal M_c$ and
$\mathcal M_r$, interchanged.

We begin with an elementary sublevel estimate. 
\begin{lem}[Affine-linear sublevel estimate]
\label{lem:linear-sublevel}
Let $E$ be a finite-dimensional Euclidean space, let $x,y\in E$, and let
$I\subset\mathbb R$ be a bounded interval. If $(x,y)\ne(0,0)$, then
$$
\Leb\{\xi\in I:
\|y+\xi x\|\le\varepsilon\max\{\|x\|,\|y\|\}\}
\le C_I\varepsilon
\qquad
(0<\varepsilon<1),
$$
where $C_I>0$ depends only on $I$.
\end{lem}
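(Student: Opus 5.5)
Since the set $\{\xi\in I:\|y+\xi x\|\le\varepsilon\max\{\|x\|,\|y\|\}\}$ and the threshold are invariant under scaling $(x,y)\mapsto(\lambda x,\lambda y)$ with $\lambda>0$, we may normalize so that $\max\{\|x\|,\|y\|\}=1$. The plan is to control the set $S_\varepsilon=\{\xi\in I:\|y+\xi x\|\le\varepsilon\}$ by a case split on the size of $\|x\|$, using that $I\subset[-L,L]$ for some $L=L(I)\ge1$.

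\emph{Case $\|x\|\ge (2L)^{-1}$.} Project onto the unit vector $\hat x=x/\|x\|$: if $\xi\in S_\varepsilon$ then $|\langle y,\hat x\rangle+\xi\|x\||\le\|y+\xi x\|\le\varepsilon$. The set of real $\xi$ satisfying this is an interval of length $2\varepsilon/\|x\|\le 4L\varepsilon$. This gives the bound with $C_I=4L$.

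\emph{Case $\|x\|<(2L)^{-1}$.} Then $\|y\|=1$ by the normalization, and for $\xi\in I$ we have $\|y+\xi x\|\ge 1-L\|x\|>1/2$. Hence $S_\varepsilon=\emptyset$ whenever $\varepsilon\le 1/2$. For $1/2<\varepsilon<1$, the trivial bound $\Leb(S_\varepsilon)\le|I|\le 2|I|\varepsilon$ suffices.

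Combining the two cases, $C_I=\max\{4L,2|I|\}$ works. There is no real obstacle here; the only point requiring care is that the threshold is relative to $\max\{\|x\|,\|y\|\}$ rather than to $\|x\|$. This is handled by the case split, which uses the boundedness of $I$ to show that a small $x$ cannot cancel $y$.
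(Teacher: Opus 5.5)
Your proof is correct and follows essentially the same route as the paper. Both arguments project onto the direction of $x$, which confines $\xi$ to an interval of length $O(\varepsilon/\|x\|)$, and both use the boundedness of $I$ to rule out cancellation when $x$ is small relative to $y$. The only difference is the normalization: you set $\max\{\|x\|,\|y\|\}=1$ and split on the size of $\|x\|$, whereas the paper sets $\|x\|=1$ and shows that a nonempty sublevel set forces $\|y\|\ll_I 1$; this is the same dichotomy written in different coordinates.
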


\begin{proof}
If $x=0$, then $y\ne0$, and the set is empty because
$\varepsilon<1$. Suppose that $x\ne0$. Dividing $x$ and $y$ by
$\|x\|$, we may assume that $\|x\|=1$. Write
$
y=ax+y^\perp$ where
$
y^\perp\perp x.
$
The assertion is immediate when $\varepsilon\ge1/4$, after increasing
$C_I$, so assume that $\varepsilon<1/4$. If the set is nonempty, then for some $\xi_0\in I$,
$$
|a+\xi_0|,\ \|y^\perp\|
\le
\varepsilon\max\{1,\|y\|\}.
$$
If $\|y\|>1$, then
$
a^2
=
\|y\|^2-\|y^\perp\|^2
\ge
(1-\varepsilon^2)\|y\|^2,
$
so $|a|\ge3\|y\|/4$. Together with
$|a+\xi_0|\le\varepsilon\|y\|$ and the boundedness of $I$, this implies
that $\|y\|\ll_I1$. The same conclusion is immediate when $\|y\|\le1$.
Consequently, every $\xi$ in the set satisfies
$|a+\xi|\ll_I\varepsilon$. The set of such $\xi\in I$ has length
$O_I(\varepsilon)$.
\end{proof}

\begin{prop}[One-factor local estimates]
\label{prop:local-contraction-d2}
Let $I\subset\mathbb R$ be a bounded interval.
\begin{enumerate}[label=\textnormal{(\roman*)}]
\item
If $V$ is a finite direct sum of copies of the standard
$\SL_2(\mathbb R)$-module $\mathbb R^2$ or its dual, then, for every
$1<\beta<2$, there is a constant $C_{I,\beta,V}>0$ such that for all $t\ge0$ and $0\ne v\in V$,
$$
\int_I\|b_tu_\xi v\|^{-\beta}\,d\xi
\le
C_{I,\beta,V}e^{-(2-\beta)t}\|v\|^{-\beta}.
$$

\item
If $V$ is a finite direct sum of copies of
$\operatorname{Sym}^2(\mathbb R^2)$ or its dual, then, for every
$0<\beta\le1$, there is a constant $C_{I,\beta,V}>0$ such that for all $t\ge0$ and $0\ne v\in V$,
$$
\int_I\|b_tu_\xi v\|^{-\beta}\,d\xi
\le
C_{I,\beta,V}e^{-c_\beta t}\|v\|^{-\beta},
\qquad
c_\beta=\min\{\beta,1-\beta\}.
$$
\end{enumerate}
\end{prop}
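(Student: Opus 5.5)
The plan is to reduce to a single irreducible summand, write $b_tu_\xi v$ explicitly in weight coordinates, and then bound the integral by a layer-cake argument using the sublevel estimate of \Cref{lem:linear-sublevel}.

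\textbf{Reduction.} Since $V$ is a finite orthogonal direct sum, we can write $v=\sum_j v_j$ with each $v_j$ in an irreducible summand, and all norms are equivalent to the max norm. The $\mathbb R^2$ and its dual are isomorphic as $\SL_2(\mathbb R)$-modules via an element of $\SO(2)$, which changes norms only by constants; likewise $\operatorname{Sym}^2$ and its dual. In weight coordinates $b_t$ is diagonal with weights $e^{\pm t}$ in case (i) and $e^{-2t},1,e^{2t}$ in case (ii), and $u_\xi$ is unipotent and polynomial in $\xi$.

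\textbf{Case (i).} On one copy, write $v=(x,y)$. Then $b_tu_\xi v=(e^{-t}x,\ e^{t}(y+\xi x))$. Summing over copies,
$$\|b_tu_\xi v\|\ge \max\{e^{-t}\|X\|,\ e^{t}\|Y+\xi X\|\},$$
where $X,Y$ collect the respective components. I will split according to whether $\|X\|\gg\|v\|$ or not.

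If $\|X\|\ll\|v\|$, then $\|Y\|\asymp\|v\|$, so $\|Y+\xi X\|\gg \|v\|$ for all $\xi\in I$ once the constant is small relative to $I$. The integrand is then at most $e^{-\beta t}\|v\|^{-\beta}\le e^{-(2-\beta)t}\|v\|^{-\beta}$ for $\beta\ge1$.

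Otherwise $\|X\|\asymp\|v\|$. By \Cref{lem:linear-sublevel} applied to $y=Y$, $x=X$, we get $\Leb\{\xi:\|Y+\xi X\|\le s\|v\|\}\ll s$. Layer-cake then gives
$$\int_I \min\{e^{t\beta}\|Y+\xi X\|^{\beta},\,\cdot\,\}^{-1}\ll \|v\|^{-\beta}\int_0^\infty \min\{1,e^{-t}\lambda^{-1/\beta}\}\,d\lambda$$
with the cutoff $\lambda\le e^{\beta t}$ coming from the $e^{-t}\|X\|$ floor. This evaluates to $\|v\|^{-\beta}e^{-t}e^{(\beta-1)t}=e^{-(2-\beta)t}\|v\|^{-\beta}$. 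Convergence needs $\beta>1$ at the top end of the layer-cake, and the bound stays finite because $\beta<2$.

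\textbf{Case (ii).} For $v=(x,y,z)$ in $\operatorname{Sym}^2$ with weights $-2,0,2$, we have
$$b_tu_\xi v=\bigl(e^{-2t}x,\ y+2\xi x,\ e^{2t}(z+\xi y+\xi^2x)\bigr).$$
The top coordinate is a quadratic polynomial $p(\xi)$. The main obstacle is a sublevel estimate for this quadratic, uniform in the coefficients: I want $\Leb\{\xi\in I:|p(\xi)|\le s\|v\|\}\ll s^{1/2}$ whenever $\|v\|\asymp$ the coefficient norm. This is the standard Remez/Pólya bound for polynomials of bounded degree, proved by factoring $p$ or by noting that some derivative of order at most $2$ is $\gg\|v\|$ on $I$. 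One must first handle the case where $x,y$ are small and $z$ dominates, exactly as in the first subcase of (i).

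In addition, the middle coordinate $y+2\xi x$ is affine, so \Cref{lem:linear-sublevel} applies to it and gives the better exponent $s^1$, with floor $1$ instead of $e^{-2t}$. I will combine the two as follows:
$$\|b_tu_\xi v\|\ge\max\{e^{2t}|p(\xi)|,\ |y+2\xi x|,\ e^{-2t}|x|\}.$$
The layer-cake integral is then bounded by $e^{-\beta t}$ from the $s^{1/2}$ estimate where $|p|$ dominates, and by the contribution of the middle coordinate. For $\beta<1$ the latter integrates as $\int s^{-\beta}\,ds<\infty$ on the set where $|p|\le e^{-2t}$, whose measure is at most $e^{-t}$, giving $e^{-(1-\beta)t}$.

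At $\beta=1$ the same computation produces a logarithm. I will avoid it by noting that $c_1=0$, so only uniform boundedness is needed. The precise bookkeeping is the part I expect to be fiddly: the sets where $|x|,|y|,|z|$ respectively dominate must be handled separately, each with its own one-dimensional sublevel estimate, yielding the exponent $c_\beta=\min\{\beta,1-\beta\}$.
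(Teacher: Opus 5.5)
\textbf{Where the proposal works.} Your reduction, part (i), and part (ii) for $0<\beta<1$ are sound. In (i), the layer-cake computation with the floor $e^{-t}\|X\|$ is the same as the paper's change of variables $r=e^{2t}(\xi+y/x)$. In (ii), normalize $\|v\|=1$ and separate off the case where $z$ dominates. The set $S=\{\xi\in I:|p(\xi)|\le e^{-2t}\}$ then has measure $O(e^{-t})$. On $S$ the affine middle coordinate contributes $O(e^{-(1-\beta)t})$. On $I\smallsetminus S$ the top coordinate contributes $O(e^{-2\beta t}+(1+t)e^{-t})$. Both are $O(e^{-c_\beta t})$.

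\textbf{The gap: the endpoint $\beta=1$.} The statement includes this endpoint, and the uniform bound there is exactly what \Cref{lem:critical-K-average-d2} uses later. With $|x|\asymp1$, your scheme gives there
\[
\int_S\min\bigl\{|y+2\xi x|^{-1},\,e^{2t}|x|^{-1}\bigr\}\,d\xi\ll\int_0^{e^{-t}}\min\{\sigma^{-1},e^{2t}\}\,d\sigma\asymp 1+t .
\]
This loss is real, not an artifact of the bookkeeping. For a double root $p(\xi)=x(\xi-\xi_0)^2$ with $\xi_0$ interior to $I$, the middle coordinate $2x(\xi-\xi_0)$, even with the floor, integrates to $\asymp t$ over $S$. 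Noting that $c_1=0$ does not rescue this, because $O(1+t)$ is not a bound uniform in $t\ge0$.

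What is missing is that for $e^{-2t}\lesssim|\xi-\xi_0|\lesssim e^{-t}$ the top coordinate $e^{2t}|p(\xi)|$ dominates the middle one. So you cannot discard it on $S$. This is why the paper keeps all three coordinates in the model integrand $(e^{-4t}+\xi^2+e^{4t}\xi^4)^{-\beta/2}$ and splits at $|\xi|=e^{-2t}$, the point where the top coordinate overtakes the middle one, rather than at $e^{-t}$.

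\textbf{A direct repair.} Split into three cases.
\begin{itemize}
\item If $|x|\ge c_I$, use only the top coordinate and the floor. Your $s^{1/2}$ sublevel bound then gives
\[
\int_I\min\bigl\{e^{-2t}|p(\xi)|^{-1},\,e^{2t}|x|^{-1}\bigr\}\,d\xi\ll\int_0^{Ce^{2t}}\min\{1,e^{-t}\lambda^{-1/2}\}\,d\lambda=O(1).
\]
Equivalently, split at $|p|\le e^{-4t}$ instead of $e^{-2t}$.
\item If $|x|<c_I$ and $|y|$ is bounded below, then for $c_I$ small relative to $I$ the middle coordinate is $\gg1$ on $I$.
\item Otherwise $|z|\gg1$, and the top coordinate is $\gg e^{2t}$ on $I$.
\end{itemize}
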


\begin{proof}[Sketch of proof]
This is a special case of
\cite[Lemma~5.1]{eskin-margulis-mozes:1998}. In the present setting, it
can also be proved by elementary calculations; we include a sketch.

For a finite direct sum, choose a summand whose component has norm
comparable to $\|v\|$. Since the norm of the full image controls the norm
of each component, it suffices, after changing the implicit constant, to
prove the estimate for a single copy. We may also choose any convenient
equivalent norm.
For the standard module, normalize $v=(x,y)$ by $\|v\|=1$. Then
$$
b_tu_\xi v=(e^{-t}x,e^t(y+\xi x)).
$$
Write $I\subset[-R,R]$. If $|x|\le(2R+2)^{-1}$, then
$|y+\xi x|\gg_I1$ on $I$, and the integral is
$O_{I,\beta}(e^{-\beta t})$. Since $\beta>1$, this is
$O_{I,\beta}(e^{-(2-\beta)t})$.

If $|x|>(2R+2)^{-1}$, the change of variables
$r=e^{2t}(\xi+y/x)$ gives
$$
\begin{aligned}
\int_I\|b_tu_\xi v\|^{-\beta}\,d\xi
&\ll_{I,\beta}
|x|^{-\beta}e^{-(2-\beta)t}
\int_{\mathbb R}(1+r^2)^{-\beta/2}\,dr \ll_{I,\beta}
e^{-(2-\beta)t}.
\end{aligned}
$$
The last integral is finite because $\beta>1$. This proves
\textnormal{(i)} after rescaling.

For the symmetric-square module, use the basis
$Z_1^2,Z_1Z_2,Z_2^2$, of weights $-2,0,2$, and write
$$
v=aZ_1^2+bZ_1Z_2+cZ_2^2,
\qquad
\|v\|=1.
$$
Up to fixed normalization, the coordinates of $u_\xi v$ are
$a, b+2a\xi, c+b\xi+a\xi^2$.
Thus
$$
\|b_tu_\xi v\|^2
\asymp
e^{-4t}|a|^2
+
|b+2a\xi|^2
+
e^{4t}|c+b\xi+a\xi^2|^2.
$$
The estimate \Cref{lem:linear-sublevel}, applied to the
middle coordinate $b+2a\xi$, provides the uniform sublevel control
in the case decomposition of
\cite[Lemma~5.1]{eskin-margulis-mozes:1998}. The contribution with the
slowest decay is reduced, after translating the variable, to the model
integral
$
\int_I
\left(e^{-4t}+\xi^2+e^{4t}\xi^4\right)^{-\beta/2}\,d\xi.
$
A direct decomposition at $|\xi|=e^{-2t}$ gives
$$
\int_I
\left(e^{-4t}+\xi^2+e^{4t}\xi^4\right)^{-\beta/2}\,d\xi
\ll_{I,\beta}
\begin{cases}
e^{-2\beta t},&0<\beta<1/2,\\
(1+t)e^{-t},&\beta=1/2,\\
e^{-2(1-\beta)t},&1/2<\beta<1,\\
1,&\beta=1.
\end{cases}
$$
In particular, this is $O_{I,\beta}(e^{-c_\beta t})$ for
$0<\beta<1$, where $c_\beta=\min\{\beta,1-\beta\}$, and is uniformly
bounded at $\beta=1$. This proves \textnormal{(ii)} after rescaling. The dual standard and dual symmetric-square modules are isomorphic to
their respective original modules, so the same estimates apply to them.
\end{proof}

We now pass from the one-factor estimates to the product group.

\begin{prop}[Product local estimates]
\label{prop:product-local-d2}
Let $\Omega\subset N$ be bounded.
\begin{enumerate}[label=\textnormal{(\roman*)}]
\item
If $V$ is either $\wedge^1\M_2(\mathbb R)$ or
$\wedge^3\M_2(\mathbb R)$, then, for every $1<\beta<2$, there is a
constant $C_{\Omega,\beta}>0$ such that for all $t\ge0$ and $0\ne v\in V$,
$$
\int_\Omega\|a_tnv\|^{-\beta}\,dn
\le
C_{\Omega,\beta}e^{-(2-\beta)t}\|v\|^{-\beta}.
$$

\item
For every $0<\beta\le1$, there is a constant
$C_{\Omega,\beta}>0$ such that, for every
$0\ne v\in\wedge^2\M_2(\mathbb R)$,
\begin{equation}\label{eq:d2-subcritical-degree-two-local}
\int_\Omega\|a_tnv\|^{-\beta}\,dn
\le
C_{\Omega,\beta}e^{-c_\beta t}\|v\|^{-\beta},
\qquad
c_\beta=\min\{\beta,1-\beta\}.
\end{equation}
\end{enumerate}
\end{prop}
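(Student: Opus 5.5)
The plan is to reduce to the one-factor estimates of \Cref{prop:local-contraction-d2} by Fubini. Since $a_t n_{\xi_1,\xi_2}=(b_tu_{\xi_1},b_tu_{\xi_2})$ and the two factors commute, we can write $a_tnv=(b_tu_{\xi_1}\otimes 1)(1\otimes b_tu_{\xi_2})v$. We may also enlarge $\Omega$ to a product box $B_N(R)=I\times I$, because the integrand is nonnegative.

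\textbf{Degrees $1$ and $3$.} Integrate first in $\xi_1$ with $\xi_2$ fixed. Under the left factor, $\wedge^1\M_2(\mathbb R)$ is $\mathsf C^{\oplus2}$ and $\wedge^3$ is $(\mathsf C^*)^{\oplus 2}$. Applying \Cref{prop:local-contraction-d2}(i) to the vector $w=(1\otimes b_tu_{\xi_2})v$ gives
\[
\int_I\|b_tu_{\xi_1}\otimes 1\cdot w\|^{-\beta}d\xi_1\ll e^{-(2-\beta)t}\|w\|^{-\beta}.
\]
Next integrate $\|(1\otimes b_tu_{\xi_2})v\|^{-\beta}$ in $\xi_2$. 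As a right-factor module the space is again a sum of standard (or dual) modules, so (i) applies once more and gives a second factor $e^{-(2-\beta)t}\|v\|^{-\beta}$. The result is in fact $e^{-2(2-\beta)t}$, which is stronger than needed, so the claimed bound holds.

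\textbf{Degree $2$.} Write $v=v_c+v_r$ with $v_c\in\mathcal M_c$ and $v_r\in\mathcal M_r$. Since $\mathcal M_c$ and $\mathcal M_r$ are orthogonal and $H$-invariant,
\[
\|a_tnv\|\ge\max\{\|a_tnv_c\|,\|a_tnv_r\|\}.
\]
On $\mathcal M_c$ the right factor acts trivially, so $a_tnv_c=(b_tu_{\xi_1})v_c$ depends only on $\xi_1$, and $\mathcal M_c\cong\operatorname{Sym}^2(\mathsf C)$ as a left module. Likewise $a_tnv_r=(b_tu_{\xi_2})v_r$, with $\mathcal M_r\cong\operatorname{Sym}^2(\mathsf R)$. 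One of the two components, say $v_c$, has norm at least $\|v\|/2$. Then
\[
\int_{I\times I}\|a_tnv\|^{-\beta}\,dn\le |I|\int_I\|b_tu_{\xi_1}v_c\|^{-\beta}d\xi_1\ll e^{-c_\beta t}\|v_c\|^{-\beta}\ll e^{-c_\beta t}\|v\|^{-\beta}
\]
by \Cref{prop:local-contraction-d2}(ii). The case where $v_r$ dominates is symmetric. For $\beta=1$ we have $c_1=0$, which is exactly the uniform bound allowed by (ii).

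\textbf{Main obstacle.} There is no real obstacle. The only points to check are that the norm on $\wedge^2$ is compatible with the orthogonal splitting, which holds because the trace inner product makes $\mathcal M_c\perp\mathcal M_r$, and that measurability and Fubini apply, which is standard. The point worth stating explicitly is that in degree $2$ we cannot exploit both factors at once, since each summand sees only one factor. This is why the exponent stays at $c_\beta$ instead of doubling, and why $\beta=1$ is critical.
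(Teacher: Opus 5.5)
Your proposal is correct and follows essentially the same route as the paper. In degrees $1$ and $3$ you apply the one-factor estimate (i) successively in $\xi_1$ and $\xi_2$ to get $e^{-2(2-\beta)t}$, and in degree $2$ you use the orthogonal $H$-invariant splitting $v=v_c+v_r$ with estimate (ii) in the single variable acting on the dominant component.
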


\begin{proof}
For the first claim, choose bounded intervals $I_1,I_2$ such that
$\Omega\subset I_1\times I_2$. Applying
\Cref{prop:local-contraction-d2}\textnormal{(i)} first in $\xi_1$ and
then in $\xi_2$ gives
$$
\int_{I_1\times I_2}
\|a_tn_{\xi_1,\xi_2}v\|^{-\beta}\,d\xi_1\,d\xi_2
\ll_{\Omega,\beta}
e^{-2(2-\beta)t}\|v\|^{-\beta}.
$$
Since $t\ge0$, this implies the stated estimate.
 For the second, write $v=v_c+v_r$ according to the orthogonal decomposition \eqref{decom}. If $\|v_c\|\ge\|v_r\|$, then
$$
\|a_tn v\|^{-\beta}\le \|b_tu_{\xi_1}v_c\|^{-\beta},
$$
and \Cref{prop:local-contraction-d2}$(ii)$ applied in $\xi_1$ gives the required estimate. If $\|v_r\|\ge\|v_c\|$, apply the same argument in $\xi_2$. Since $\max(\|v_c\|,\|v_r\|)\gg\|v\|$, both cases give \eqref{eq:d2-subcritical-degree-two-local}.
\end{proof}

The endpoint estimate in degree $2$ does not provide contraction near
$\mathcal M_c$ or $\mathcal M_r$. We therefore introduce a weight that
penalizes proximity to these two summands. For $0<\theta<1/4$ and
$0\ne w\in\wedge^2\M_2(\mathbb R)$ with
$w\notin\mathcal M_c\cup\mathcal M_r$, define
\begin{equation}\label{eq:d2-local-weight}
\phi_\theta(w)
=
\|w\|^{-1+4\theta}
\|w-\pi_cw\|^{-2\theta}
\|w-\pi_rw\|^{-2\theta}.
\end{equation}
For $w\in\mathcal M_c\cup\mathcal M_r$, set
$\phi_\theta(w)=+\infty$. 

We first record the elementary distortion estimates used to globalize the
local bounds.

\begin{lem}[Log-Lipschitz estimates]
\label{lem:d2-log-lipschitz}
For all $s\ge0$, $n\in B_N(1)$, $1\le i\le3$, and
$v\in\wedge^i\M_2(\mathbb R)$,
\begin{equation}\label{eq:d2-log-lipschitz-wedge}
e^{-2i(s+1)}\|v\|
\le
\|a_snv\|
\le
e^{2i(s+1)}\|v\|.
\end{equation}
In particular, the exponent is at most $6$ in every exterior degree. In
degree $2$, whenever both sides are finite,
\begin{equation}\label{eq:d2-log-lipschitz-phi}
\phi_\theta(a_snv)
\le
e^{4(s+1)}\phi_\theta(v).
\end{equation}
\end{lem}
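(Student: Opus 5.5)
The plan is to bound the operator norm of $a_sn$ on each exterior power and then treat $\phi_\theta$ factor by factor, using that $a_sn\in H$ commutes with $\pi_c$ and $\pi_r$.

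\emph{Step 1: the operator norm of $a_sn$.} For $n=n_{\xi_1,\xi_2}\in B_N(1)$, each unipotent $u_{\xi}$ with $|\xi|\le1$ has operator norm at most $e$ on $\mathbb R^2$. Indeed, $\|u_\xi\|_{\rm op}\le 1+|\xi|\le 2\le e$. Likewise, $\|b_s\|_{\rm op}=e^{s}$, and the same bounds hold for the inverses. On $\wedge^1\M_2(\mathbb R)=\mathsf C\otimes\mathsf R$, the operator norm of a tensor product $g_1\otimes g_2$ is $\|g_1\|\|g_2\|$. Hence
\[
\|a_sn\|_{\rm op}\le (e^{s}\cdot e)^2=e^{2(s+1)},
\]
and the same bound holds for $(a_sn)^{-1}=n^{-1}a_{-s}$. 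On $\wedge^i$ the operator norm is at most the $i$-th power of the norm on $\wedge^1$. This gives both inequalities in \eqref{eq:d2-log-lipschitz-wedge}, since the lower bound follows from $\|v\|\le\|(a_sn)^{-1}\|_{\rm op}\|a_snv\|$. We always have $i\le3$, so $2i\le6$.

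\emph{Step 2: compatibility with the projections.} The decomposition \eqref{decom} is $H$-invariant. The projections $\pi_c,\pi_r$ are orthogonal, and the two summands are orthogonal, so $v-\pi_cv=\pi_rv$ and $v-\pi_rv=\pi_cv$. Since $a_sn\in H$ preserves each summand, $\pi_c(a_snv)=a_sn\,\pi_cv$, and similarly for $\pi_r$. In particular,
\[
\|a_snv-\pi_c(a_snv)\|=\|a_sn\,\pi_rv\|,\qquad \|a_snv-\pi_r(a_snv)\|=\|a_sn\,\pi_cv\|.
\]

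\emph{Step 3: the factor-by-factor bound in degree 2.} The right factor acts trivially on $\mathcal M_c$, and the left factor acts trivially on $\mathcal M_r$. Therefore on $\mathcal M_c$ the element $a_sn$ acts through $\operatorname{Sym}^2$ of $b_su_{\xi_1}$ alone, with operator norm and inverse norm at most $(e^{s+1})^2=e^{2(s+1)}$; the same holds on $\mathcal M_r$. On the whole of $\wedge^2$, Step 1 gives the bound $e^{4(s+1)}$, and this can be sharpened to $e^{2(s+1)}$ because $\wedge^2$ is the orthogonal sum of the two invariant pieces. The three factors of $\phi_\theta$ then satisfy:
\begin{itemize}
\item $\|a_snv\|^{-1+4\theta}\le e^{2(s+1)(1-4\theta)}\|v\|^{-1+4\theta}$, since the exponent is negative and $\|a_snv\|\ge e^{-2(s+1)}\|v\|$;
\item $\|a_sn\,\pi_rv\|^{-2\theta}\le e^{4\theta(s+1)}\|\pi_rv\|^{-2\theta}$;
\item $\|a_sn\,\pi_cv\|^{-2\theta}\le e^{4\theta(s+1)}\|\pi_cv\|^{-2\theta}$.
\end{itemize}
Multiplying, the total exponent of $e^{s+1}$ is
\[
2(1-4\theta)+8\theta=2\le4,
\]
which yields \eqref{eq:d2-log-lipschitz-phi}. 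Even with the crude degree-2 bound $e^{4(s+1)}$, the exponent is at most $4(1-4\theta)+8\theta\le4$.

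The finiteness caveat is consistent with this. Because $a_sn$ preserves $\mathcal M_c$ and $\mathcal M_r$, it preserves $\mathcal M_c\cup\mathcal M_r$ and its complement, so both sides are finite simultaneously.

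The only real point to watch is using the orthogonality of $\mathcal M_c\perp\mathcal M_r$ together with the $H$-equivariance of the projections. Once that is in place, everything is routine norm bookkeeping, and I expect no real obstacle.
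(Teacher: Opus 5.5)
Your proof is correct and follows the same route as the paper. Both arguments bound $b_su_\xi$ and its inverse by $e^{s+1}$, pass to the tensor product and exterior powers, and then use the $H$-equivariance of $\pi_c,\pi_r$ to treat the three factors of $\phi_\theta$ separately. The one difference is that you sharpen the degree-$2$ operator norm to $e^{2(s+1)}$ using the orthogonal splitting $\wedge^2\M_2(\mathbb R)=\mathcal M_c\oplus\mathcal M_r$, which gives the stronger factor $e^{2(s+1)}$; the paper instead uses the crude bound $e^{4(s+1)}$ and obtains exactly $e^{4(1-4\theta)(s+1)}e^{8\theta(s+1)}e^{8\theta(s+1)}=e^{4(s+1)}$. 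For that reason, the crude total in your side remark should read $4(1-4\theta)+16\theta=4$ rather than $4(1-4\theta)+8\theta$, a harmless slip.
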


\begin{proof}
For $|\xi|\le1$, both $b_su_\xi$ and its inverse have operator norm at
most $e^{s+1}$ on $\mathbb R^2$. Hence $a_sn$, acting on
$\M_2(\mathbb R)\simeq\mathbb R^2\otimes\mathbb R^2$, has operator norm
and inverse operator norm at most $e^{2(s+1)}$. Passing to the $i$th exterior
power proves \eqref{eq:d2-log-lipschitz-wedge}. The projections $\pi_c$ and $\pi_r$ commute with the $H$-action, so the
degree-$2$ estimate applies to $v$, $v-\pi_cv$, and $v-\pi_rv$. Since
$0<\theta<1/4$,
$$
\begin{aligned}
\phi_\theta(a_snv)
&\le
e^{4(1-4\theta)(s+1)}
e^{8\theta(s+1)}
e^{8\theta(s+1)}
\phi_\theta(v)=
e^{4(s+1)}\phi_\theta(v).
\end{aligned}
$$
This proves \eqref{eq:d2-log-lipschitz-phi}.
\end{proof}

The following is the main local input in the critical
degree.

\begin{lem}[Weighted local estimate in degree $2$]
\label{lem:weighted-local-d2}
For every $0<\theta<1/10$, there is a constant $C_\theta>0$ such that,
for every $s\ge0$ and every
$0\ne w\in\wedge^2\M_2(\mathbb R)$ with
$w\notin\mathcal M_c\cup\mathcal M_r$,
$$
\int_{B_N(1)}
\phi_\theta(a_snw)^{1+\theta}\,dn
\le
C_\theta e^{-\theta s}\phi_\theta(w)^{1+\theta}.
$$
\end{lem}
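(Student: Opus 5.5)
Write $w=w_c+w_r$ with $w_c=\pi_cw\in\mathcal M_c$ and $w_r=\pi_rw\in\mathcal M_r$. Then $w-\pi_cw=w_r$ and $w-\pi_rw=w_c$, so
$$
\phi_\theta(w)=\|w\|^{-1+4\theta}\|w_c\|^{-2\theta}\|w_r\|^{-2\theta}.
$$
The plan is to use two facts. First, $\pi_c$ and $\pi_r$ commute with $H$. Second, for $n=n_{\xi_1,\xi_2}$, the element $a_sn$ acts on $\mathcal M_c$ only through the left factor $b_su_{\xi_1}$ and on $\mathcal M_r$ only through the right factor $b_su_{\xi_2}$. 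Hence $\pi_c(a_snw)=b_su_{\xi_1}w_c$ and $\pi_r(a_snw)=b_su_{\xi_2}w_r$. Since $w\notin\mathcal M_c\cup\mathcal M_r$, both components are nonzero, and they stay nonzero after applying $a_sn$. Put
$$
p=(1-4\theta)(1+\theta),\qquad q=2\theta(1+\theta).
$$
Then
$$
\phi_\theta(a_snw)^{1+\theta}
=\|a_snw\|^{-p}\,\|b_su_{\xi_1}w_c\|^{-q}\,\|b_su_{\xi_2}w_r\|^{-q}.
$$

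By symmetry (swap the roles of the two factors and of $\xi_1,\xi_2$), assume $\|w_c\|\ge\|w_r\|$. Then $\|w_c\|\asymp\|w\|$. The decomposition \eqref{decom} is orthogonal, so $\|a_snw\|\ge\|\pi_c(a_snw)\|=\|b_su_{\xi_1}w_c\|$, and since $p>0$ this gives
$$
\phi_\theta(a_snw)^{1+\theta}\le \|b_su_{\xi_1}w_c\|^{-(p+q)}\,\|b_su_{\xi_2}w_r\|^{-q}.
$$
The right-hand side is a product of a function of $\xi_1$ and a function of $\xi_2$. Since $B_N(1)=[-1,1]^2$, the integral over $B_N(1)$ splits as a product of two one-variable integrals.

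Each factor is handled by \Cref{prop:local-contraction-d2}(ii), using that $\mathcal M_c$ restricts to $\operatorname{Sym}^2(\mathsf C)$ under the left factor and $\mathcal M_r$ restricts to $\operatorname{Sym}^2(\mathsf R)$ under the right factor.
\begin{itemize}
\item For $\xi_1$, the exponent is $\beta_1=p+q=1-\theta-2\theta^2\in(0,1)$. This gives decay $e^{-c_{\beta_1}s}\|w_c\|^{-(p+q)}$ with $c_{\beta_1}=\min\{\beta_1,1-\beta_1\}=\theta+2\theta^2$, because $\theta<1/10$ makes $\beta_1>1/2$.
\item For $\xi_2$, the exponent is $\beta_2=q\in(0,1)$. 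This gives at most $C\|w_r\|^{-q}$, and in fact decay $e^{-qs}$, which we do not need.
\end{itemize}
Multiplying the two bounds yields
$$
\int_{B_N(1)}\phi_\theta(a_snw)^{1+\theta}\,dn\ll_\theta e^{-\theta s}\|w_c\|^{-p-q}\|w_r\|^{-q}.
$$
Since $\|w_c\|\asymp\|w\|$, the right-hand side is $\asymp e^{-\theta s}\|w\|^{-p}\|w_c\|^{-q}\|w_r\|^{-q}=e^{-\theta s}\phi_\theta(w)^{1+\theta}$, which is the claim.

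The main point to check is the first inequality: replacing $\|a_snw\|$ by its dominant component must not spoil the exponents. The weight is built so that this works. The exponent loss $\theta$ moved from $\|w\|$ onto $\|w_c\|$ brings the total $\xi_1$-exponent strictly below the critical value $1$, where \Cref{prop:local-contraction-d2}(ii) gives genuine decay of rate $\ge\theta$, rather than only the uniform bound available at $\beta=1$. One should also confirm that the constants of \Cref{prop:local-contraction-d2} are uniform in $s$ and $w$, which holds as stated there, and that the implied constants depend only on $\theta$.
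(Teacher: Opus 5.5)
Your proposal is correct and follows essentially the same route as the paper. Both arguments split $w=w_c+w_r$, reduce to the case $\|w_c\|\ge\|w_r\|$, bound $\|a_snw\|$ below by $\|b_su_{\xi_1}w_c\|$, and apply \Cref{prop:local-contraction-d2}(ii) separately in $\xi_1$ with exponent $(1+\theta)(1-2\theta)$ and in $\xi_2$ with exponent $2\theta(1+\theta)$. Your explicit computation $c_{\beta_1}=\theta+2\theta^2\ge\theta$ just spells out the paper's remark that the two rates add up to at least $\theta$.
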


\begin{proof}
Write
$$
w=w_c+w_r,
\qquad
w_c:=\pi_cw\in\cM_c,
\qquad
w_r:=\pi_rw\in\cM_r.
$$
Since $w\notin\cM_c\cup\cM_r$, both $w_c$ and $w_r$ are nonzero. For
$n=n_{\xi_1,\xi_2}$, the two factors act separately
and hence
$$
a_snw-\pi_c(a_snw)=b_su_{\xi_2}w_r,
\qquad
a_snw-\pi_r(a_snw)=b_su_{\xi_1}w_c.
$$
Because $\cM_c$ and $\cM_r$ are fixed complementary subspaces,
$\|w\|\asymp\max\{\|w_c\|,\|w_r\|\}$. Suppose first that
$\|w_c\|\ge\|w_r\|$. Then
$$
\phi_\theta(w)^{1+\theta}
\asymp_\theta
\|w_c\|^{-(1+\theta)(1-2\theta)}\|w_r\|^{-2(1+\theta)\theta}.
$$
Also, since $\|a_snw\|\ge c\|b_su_{\xi_1}w_c\|$, we have
$$
\phi_\theta(a_snw)^{1+\theta}
\ll_\theta
\|b_su_{\xi_1}w_c\|^{-(1+\theta)(1-2\theta)}
\|b_su_{\xi_2}w_r\|^{-2(1+\theta)\theta}.
$$
For $0<\theta<1/10$, both exponents
$(1+\theta)(1-2\theta)$ and $2(1+\theta)\theta$ are strictly smaller than $1$. Moreover their
$c_\beta$-rates from \Cref{prop:local-contraction-d2}$(ii)$ add up to at
least $\theta$. Applying that estimate in the two variables gives
$$
\begin{aligned}
\int_{B_N(1)}\phi_\theta(a_snw)^{1+\theta}\,dn
&\ll e^{-\theta s}\|w_c\|^{-(1+\theta)(1-2\theta)}
\|w_r\|^{-2(1+\theta)\theta}\ll  e^{-\theta s}\phi_\theta(w)^{1+\theta}.
\end{aligned}
$$
The case $\|w_r\|\ge\|w_c\|$ is identical, with $w_c$ and $w_r$ interchanged.
\end{proof}

We conclude by recording the exponents used later in degrees $1$ and $3$.

\begin{lem}[Local estimates in degrees $1$ and $3$]
\label{lem:local-dim-1-3-d2}
For every $0<\theta<1/100$, there is a constant $C_\theta>0$ such that,
for every $s\ge0$, $h\in H$, $\Delta\in X$, and every
$\Delta$-rational subspace $V$ of dimension $1$ or $3$,
$$
\int_{B_N(1)}
\|a_snh\mathsf w_{\Delta,V}\|^{-(1+\sqrt\theta)}\,dn
\le
C_\theta e^{-(1-\sqrt\theta)s}
\|h\mathsf w_{\Delta,V}\|^{-(1+\sqrt\theta)}.
$$
\end{lem}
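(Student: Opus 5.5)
This is a direct application of \Cref{prop:product-local-d2}\textnormal{(i)} with $\beta=1+\sqrt\theta$ and $\Omega=B_N(1)$. We apply it to the vector $v=h\mathsf w_{\Delta,V}$, which lies in $\wedge^1\M_2(\mathbb R)$ or $\wedge^3\M_2(\mathbb R)$.

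First, we check the hypotheses. The vector $\mathsf w_{\Delta,V}$ is nonzero because it is the wedge of a basis of $V$. The element $h\in H$ acts invertibly on $\wedge^i\M_2(\mathbb R)$, so $v\ne0$. Since $0<\theta<1/100$, we have $\sqrt\theta<1/10$, and hence $\beta=1+\sqrt\theta\in(1,2)$. Both $\wedge^1\M_2(\mathbb R)=\mathsf C\otimes\mathsf R$ and $\wedge^3\M_2(\mathbb R)\simeq\mathsf C^*\otimes\mathsf R^*$ are covered by \Cref{prop:product-local-d2}\textnormal{(i)}. Under each single $\SL_2(\mathbb R)$-factor they restrict to direct sums of copies of the standard module or its dual, which is exactly what \Cref{prop:local-contraction-d2}\textnormal{(i)} needs in the iterated integration.

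Applying the proposition, we get
$$
\int_{B_N(1)}\|a_snv\|^{-(1+\sqrt\theta)}\,dn
\le C_{B_N(1),1+\sqrt\theta}\,e^{-(2-\beta)s}\|v\|^{-(1+\sqrt\theta)}.
$$
The exponent is $2-\beta=1-\sqrt\theta$, which is precisely the claimed rate. We set $C_\theta:=C_{B_N(1),1+\sqrt\theta}$. This constant depends only on $\theta$ and the two fixed modules, so it is uniform in $s$, $h$, $\Delta$ and $V$.

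The only points needing care are this uniformity and the fact that the constants from the two modules of degrees $1$ and $3$ can be combined by taking their maximum. Neither is a real obstacle. We also note that the iterated application in the proof of \Cref{prop:product-local-d2}\textnormal{(i)} actually yields the stronger rate $e^{-2(2-\beta)s}$, but we only need the weaker one.
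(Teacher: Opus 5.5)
Your proposal is correct and is the paper's argument: apply \Cref{prop:product-local-d2}\textnormal{(i)} with $\beta=1+\sqrt\theta$ to $v=h\mathsf w_{\Delta,V}$, using that $\wedge^1\M_2(\mathbb R)$ and $\wedge^3\M_2(\mathbb R)$ are $\mathsf C\otimes\mathsf R$ and $\mathsf C^*\otimes\mathsf R^*$. The rate is then $2-\beta=1-\sqrt\theta$.
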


\begin{proof}
In degrees $1$ and $3$, the relevant representations are
$\mathsf C\otimes\mathsf R$ and
$\mathsf C^*\otimes\mathsf R^*$, respectively. Apply
\Cref{prop:product-local-d2}\textnormal{(i)} with
$\beta=1+\sqrt\theta$.
\end{proof}
All the estimates in this section remain valid with $B_N(1)$ replaced by
$B_N(R)$ for any fixed $R>0$, with constants that may depend on $R$.

\section{Global height functions and contraction estimates}
\label{s:globalheight}

We combine the local estimates from
\Cref{sec:local-d2} with the intersection-sum inequality to obtain a
global contraction estimate for the auxiliary modified height. In the
present $2\times2$ setting, degree $2$ is the only critical exterior
degree, whereas several critical degrees occur in the higher-dimensional
argument of \cite{KimOh_det}. Consequently, the resulting Margulis
inequality has no logarithmic factor, and the standard intersection-sum
inequality of \cite{eskin-margulis-mozes:1998} suffices in place of the
Benoist--Quint mother inequality used in the higher-dimensional argument
of \cite{KimOh_det}. We isolate the remaining obstruction to contraction
in the exceptional set $\mathcal E_{s,\eta,M,M'}$ introduced below.

Fix $0<\eta<1$, $M>1$, and $0<\theta<10^{-4}$.

\begin{defn}[Auxiliary modified height]\label{def:auxiliary-height-d2}
For \(h\in H\) and \(\Delta\in X\), define
\begin{equation}\label{eq:d2-global-tilde-height}
\widetilde\alpha_{\eta,M,\theta}(h;\Delta):=\max\left\{
\begin{array}{l}
1,\ \alpha_1(h\Delta)^{1+\sqrt\theta},\ \alpha_3(h\Delta)^{1+\sqrt\theta},\\[1mm]
\displaystyle \sup_{\substack{V\text{ \(\Delta\)-rational two-plane}\\ \mathsf w_{\Delta,V}\notin\mathcal H_{\eta,M},\ 0<\|h\mathsf w_{\Delta,V}\|\le1}}
\phi_\theta(h\mathsf w_{\Delta,V})^{1+\theta},\\[4mm]
\displaystyle \sup_{\substack{V\text{ \(\Delta\)-rational two-plane}\\ \mathsf w_{\Delta,V}\in\mathcal H_{\eta,M},\ 0<\|h\mathsf w_{\Delta,V}\|\le1}}
\|h\mathsf w_{\Delta,V}\|^{-(1-\theta)}
\end{array}\right\}.
\end{equation}
The supremum over an empty set is zero.
\end{defn}

The sets indexing the two suprema in
\eqref{eq:d2-global-tilde-height} are finite: the condition
$\|h\mathsf w_{\Delta,V}\|\le1$ restricts the primitive Pl\"ucker vectors
to a bounded subset of a discrete set. Moreover, since
$\|w-\pi_cw\|,\|w-\pi_rw\|\ll\|w\|$, for $0<\|w\|\le1$ we have
$\phi_\theta(w)\gg\|w\|^{-1}$. It follows that
\begin{equation}\label{eq:auxiliary-comparisons}
\widehat\alpha_{\eta,M}(h;\Delta)^{1+\theta}
\ll_\theta
\widetilde\alpha_{\eta,M,\theta}(h;\Delta),
\qquad
\alpha(h\Delta)^{1-\theta}
\ll_\theta
\widetilde\alpha_{\eta,M,\theta}(h;\Delta).
\end{equation}

We shall repeatedly use the following form of Young's inequality:
\begin{equation}\label{eq:scaled-young-d2}
uA^q
\le
\rho A
+
C_q\rho^{-q/(1-q)}u^{1/(1-q)}
\qquad
(0<q<1,\ u,A,\rho>0).
\end{equation}
We also use the intersection-sum inequality
\cite[Lemma~5.6]{eskin-margulis-mozes:1998}:
\begin{equation}\label{eq:intersection-sum-d2}
\|\mathsf w_{\Delta,V\cap W}\|\,
\|\mathsf w_{\Delta,V+W}\|
\ll
\|\mathsf w_{\Delta,V}\|\,
\|\mathsf w_{\Delta,W}\|,
\end{equation}
where the covolumes in dimensions $0$ and $4$ are interpreted as $1$.

\begin{lem}[Global estimates in dimensions $1$ and $3$]
\label{lem:d2-noncritical-height-contribution}
There is a constant $C_\theta>0$ such that, for every $s\ge0$, $h\in H$,
and $\Delta\in X$,
\begin{equation}\label{eq:d2-noncritical-tilde-bound}
\begin{aligned}
&\int_{B_N(1)}
\left(
\alpha_1(a_snh\Delta)^{1+\sqrt\theta}
+
\alpha_3(a_snh\Delta)^{1+\sqrt\theta}
\right)\,dn\\
&\qquad\le
C_\theta e^{-(1-\sqrt\theta)s}
\widetilde\alpha_{\eta,M,\theta}(h;\Delta)
+
C_\theta e^{C_\theta s}.
\end{aligned}
\end{equation}
\end{lem}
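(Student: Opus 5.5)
We follow the standard Eskin--Margulis--Mozes scheme: fix $h$, $\Delta$, $s$, and for each $n\in B_N(1)$ choose a rational subspace $V$ of dimension $1$ (resp.\ $3$) realizing (up to a factor $2$) $\alpha_1(a_snh\Delta)$ (resp.\ $\alpha_3$). We treat degree $1$ in detail; degree $3$ is symmetric. Set $\lambda=e^{12(s+1)}$. By \Cref{lem:d2-log-lipschitz}, if $\|a_snh\mathsf w_{\Delta,V}\|\le 1$ then $\|h\mathsf w_{\Delta,V}\|\le e^{2(s+1)}$. We split the rational lines $V$ into two classes according to whether $V$ is ``short'' for $h\Delta$ or not.

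\emph{Case A (no short competitor).} If $\alpha_1(h\Delta)$ is controlled, i.e.\ the relevant lines are isolated (no other line $W$ with $\|h\mathsf w_{\Delta,W}\|$ comparable), then for $n\in B_N(1)$ the maximizing vector is one of boundedly many candidates. We then estimate
\[
\alpha_1(a_snh\Delta)^{1+\sqrt\theta}\le \sum_{V}\|a_snh\mathsf w_{\Delta,V}\|^{-(1+\sqrt\theta)}
\]
over these candidates. We integrate each term using \Cref{lem:local-dim-1-3-d2}, which gives $e^{-(1-\sqrt\theta)s}\|h\mathsf w_{\Delta,V}\|^{-(1+\sqrt\theta)}\le e^{-(1-\sqrt\theta)s}\alpha_1(h\Delta)^{1+\sqrt\theta}\le e^{-(1-\sqrt\theta)s}\widetilde\alpha_{\eta,M,\theta}(h;\Delta)$.

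\emph{Case B (two short lines).} Otherwise there are two distinct rational lines $V,W$ with $\|h\mathsf w_V\|,\|h\mathsf w_W\|\le\lambda\|h\mathsf w_V\|$. We apply \eqref{eq:intersection-sum-d2} to them: $V\cap W=0$, so $\|h\mathsf w_{\Delta,V+W}\|\ll\|h\mathsf w_V\|\|h\mathsf w_W\|\ll\lambda\|h\mathsf w_V\|^2$. This gives
\[
\|h\mathsf w_V\|^{-1}\ll \lambda^{1/2}\|h\mathsf w_{\Delta,V+W}\|^{-1/2},
\]
which bounds $\alpha_1(h\Delta)$ by $e^{Cs}$ times a square root of a degree-$2$ quantity. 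The degree-$2$ quantity is in turn bounded by $\widetilde\alpha_{\eta,M,\theta}(h;\Delta)$. There are two subcases:
\begin{itemize}
\item for a non-quasi-null $V+W$, we use $\phi_\theta\gg\|\cdot\|^{-1}$ and \eqref{eq:auxiliary-comparisons};
\item for a quasi-null $V+W$, we use the power $1-\theta$;
\item if $\|h\mathsf w_{V+W}\|>1$, the bound is trivial.
\end{itemize}
So $\alpha_1(a_snh\Delta)^{1+\sqrt\theta}\ll e^{Cs}\widetilde\alpha^{(1+\sqrt\theta)/(2(1-\theta))}$. The exponent $q=(1+\sqrt\theta)/(2(1-\theta))<1$ for small $\theta$. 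Young's inequality \eqref{eq:scaled-young-d2} with $\rho=e^{-s}$ converts $e^{Cs}\widetilde\alpha^{q}$ into $e^{-s}\widetilde\alpha+C e^{C's}$.

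\emph{Degree $3$.} This is symmetric, using sums $V+W=\M_2(\mathbb R)$ and intersections $V\cap W$ of dimension $2$. Here $\|h\mathsf w_{V\cap W}\|\ll\|h\mathsf w_V\|\|h\mathsf w_W\|$ gives $\|h\mathsf w_V\|^{-1}\ll\lambda^{1/2}\|h\mathsf w_{V\cap W}\|^{-1/2}$.

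The main obstacle is making the case split rigorous uniformly in $n$. The candidate set for the maximizer must be finite and independent of $n$. This should follow from the Lipschitz bound: the maximizer at $a_snh\Delta$ has $\|h\mathsf w\|\le\lambda\,\alpha_1(h\Delta)^{-1}$. We then either have a unique such $V$ (Case A, one term) or at least two (Case B). Summing the two cases yields \eqref{eq:d2-noncritical-tilde-bound}.
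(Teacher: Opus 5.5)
Your argument is correct and is essentially the paper's. A single dominant subspace $V_0$ (the shortest one in $h\Delta$) is handled by \Cref{lem:local-dim-1-3-d2}. Any competitor is paired with $V_0$ through \eqref{eq:intersection-sum-d2} and \eqref{eq:auxiliary-comparisons}, which yields the sublinear power $\widetilde\alpha_{\eta,M,\theta}(h;\Delta)^{(1+\sqrt\theta)/(2(1-\theta))}$, and this is absorbed by \eqref{eq:scaled-young-d2}.

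The only difference is organizational. You make an $n$-independent dichotomy on whether $V_0$ is isolated by the factor $\lambda=e^{12(s+1)}$, whereas the paper writes a single pointwise three-term bound in which each non-dominant $V\in\mathcal P$ is paired with $V_0$. Two small fixes: in Case B the inequality should read $\|h\mathsf w_{\Delta,W}\|\le\lambda\|h\mathsf w_{\Delta,V_0}\|$, and the step from $\alpha_1(h\Delta)$ to $\alpha_1(a_snh\Delta)$ should cite \Cref{lem:d2-log-lipschitz}.
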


\begin{proof}
Fix $i\in\{1,3\}$, and let $\mathcal P$ be the finite set of
$i$-dimensional $\Delta$-rational subspaces $V$ satisfying
$$
\|h\mathsf w_{\Delta,V}\|\le e^{2i(s+1)}.
$$
If $\|a_snh\mathsf w_{\Delta,V}\|<1$, then
\eqref{eq:d2-log-lipschitz-wedge} implies that $V\in\mathcal P$.

For distinct $V,W\in\mathcal P$, the intersection-sum inequality
\eqref{eq:intersection-sum-d2}, together with
\eqref{eq:auxiliary-comparisons}, gives
$$
\begin{aligned}
&\|h\mathsf w_{\Delta,V}\|^{-(1+\sqrt\theta)}
\|h\mathsf w_{\Delta,W}\|^{-(1+\sqrt\theta)}\ll
\widetilde\alpha_{\eta,M,\theta}(h;\Delta)^{
(1+\sqrt\theta)/(1-\theta)}.
\end{aligned}
$$
For $i=1$, the only nontrivial factor on the left-hand side of
\eqref{eq:intersection-sum-d2} is associated with $V+W$; for $i=3$, it
is associated with $V\cap W$.

If $\mathcal P\ne\emptyset$, choose $V_0\in\mathcal P$ maximizing
$\|h\mathsf w_{\Delta,V}\|^{-(1+\sqrt\theta)}$. Then, for
$V\ne V_0$,
$$
\|h\mathsf w_{\Delta,V}\|^{-(1+\sqrt\theta)}
\ll
\widetilde\alpha_{\eta,M,\theta}(h;\Delta)^{
(1+\sqrt\theta)/(2(1-\theta))}.
$$
Since
$
\frac{1+\sqrt\theta}{2(1-\theta)}
=
\frac{1}{2(1-\sqrt\theta)}
<1,
$
we obtain, pointwise in $n$,
$$
\begin{aligned}
\alpha_i(a_snh\Delta)^{1+\sqrt\theta}
\ll\;&
1+
\|a_snh\mathsf w_{\Delta,V_0}\|^{-(1+\sqrt\theta)}\\
&+
e^{2i(1+\sqrt\theta)(s+1)}
\widetilde\alpha_{\eta,M,\theta}(h;\Delta)^{
(1+\sqrt\theta)/(2(1-\theta))},
\end{aligned}
$$
where we omit the middle term if $\mathcal P$ is empty. Integrating the
middle term using \Cref{lem:local-dim-1-3-d2}, and applying
\eqref{eq:scaled-young-d2} to the last term with
$\rho=e^{-(1-\sqrt\theta)s}$, proves the required estimate for $i$.
Summing over $i=1,3$ completes the proof.
\end{proof}

\subsection{The exceptional set}

The auxiliary height can fail to contract when a non-quasi-null rational
two-plane of controlled covolume is carried very close to one of the
exceptional summands. We now isolate this possibility.

For $M'\ge1$, define
\begin{equation}\label{eq:d2-exceptional-scale}
\varepsilon_{s,M'}(h;\Delta)
:=
\begin{cases}
e^{-M's},
&
\alpha(h\Delta)\le e^{5(s+1)},\\
\alpha(h\Delta)^{-M'},
&
\alpha(h\Delta)>e^{5(s+1)}.
\end{cases}
\end{equation}
Let $\mathcal E_{s,\eta,M,M'}$ be the set of pairs
$(h,\Delta)\in H\times X$ for which there is a $\Delta$-rational two-plane
$V$, with
$
w=\mathsf w_{\Delta,V}\notin\mathcal H_{\eta,M},
$
such that
$$
0<\|hw\|\le e^{4(s+1)},
\qquad
\min\{\|hw-\pi_c(hw)\|,\|hw-\pi_r(hw)\|\}
\le
\varepsilon_{s,M'}(h;\Delta).
$$

\begin{lem}\label{lem:d2-quasinull-at-identity}
Let $\Delta\in X$, $0<\eta\le1$, and $M\ge1$. There is
$s_0=s_0(\Delta,\eta,M)$ such that
$(e,\Delta)\notin\mathcal E_{s,\eta,M,5M}$ for every $s\ge s_0$.
\end{lem}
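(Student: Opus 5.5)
At $h=e$ the exceptional set involves no dynamics, so I would argue directly from the definition. Fix $\Delta$. Since $\alpha(\Delta)$ is a fixed finite number, for $s$ large we have $\alpha(\Delta)\le e^{5(s+1)}$. Then $\varepsilon_{s,5M}(e;\Delta)=e^{-5Ms}$. Suppose $(e,\Delta)\in\mathcal E_{s,\eta,M,5M}$. Then there is a $\Delta$-rational two-plane $V$ with primitive Pl\"ucker vector $w\notin\mathcal H_{\eta,M}$ such that
\[
0<\|w\|\le e^{4(s+1)},\qquad \min\{\|w-\pi_cw\|,\|w-\pi_rw\|\}\le e^{-5Ms}.
\]
Because $w\notin\mathcal H_{\eta,M}$, the same minimum is strictly larger than $\eta\|w\|^{-M}$. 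Combining the two inequalities with $\|w\|\le e^{4(s+1)}$ gives
\[
\eta e^{-4M(s+1)}\le\eta\|w\|^{-M}<e^{-5Ms}.
\]
This is equivalent to $\eta e^{-4M}<e^{-Ms}$, which fails once $s\ge s_0:=4+M^{-1}\log(1/\eta)$.

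So I would take $s_0$ to be the maximum of this threshold and the one guaranteeing $\alpha(\Delta)\le e^{5(s+1)}$, namely $s\ge\log\alpha(\Delta)/5$. Since $\alpha(\Delta)\ge1$ for a unimodular lattice, that second threshold is nonnegative and finite. Both thresholds depend only on $\Delta,\eta,M$, as the lemma requires.

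No step here should be an obstacle. The one point to check is that the exceptional scale is the first case of \eqref{eq:d2-exceptional-scale}, $e^{-M's}$ with $M'=5M$, rather than $\alpha(\Delta)^{-5M}$. This is exactly what the requirement $s\ge\log\alpha(\Delta)/5$ secures. The exponent $5M$ beats the height exponent $4M$ coming from $\|w\|\le e^{4(s+1)}$, and the linear gap $Ms$ absorbs the constants $\eta$ and $e^{4M}$.
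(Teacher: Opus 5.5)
Your argument is correct and is essentially the paper's proof. You force the first case $\varepsilon_{s,5M}(e;\Delta)=e^{-5Ms}$ for large $s$, then play $\|w\|\le e^{4(s+1)}$ against $w\notin\mathcal H_{\eta,M}$, and you make the threshold $s_0$ explicit. One side remark is false: $\alpha(\Delta)\ge1$ need not hold for unimodular $\Delta$ (the unimodular rescaling of $D_4$ has $\alpha=2^{-1/4}$), but you only use that $\log\alpha(\Delta)/5$ is finite, so nothing is affected.
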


\begin{proof}
For all sufficiently large $s$, we have
$\alpha(\Delta)\le e^{5(s+1)}$, so the first case in
\eqref{eq:d2-exceptional-scale} applies. If $(e,\Delta)$ were exceptional,
there would be some
$0\ne w\notin\mathcal H_{\eta,M}$ satisfying $\|w\|\le e^{4(s+1)}$ and
$$
\min\{\|w-\pi_cw\|,\|w-\pi_rw\|\}
\le
e^{-5Ms}
\le
\eta e^{-4M(s+1)}
\le
\eta\|w\|^{-M}
$$
for all sufficiently large $s$. This would imply
$w\in\mathcal H_{\eta,M}$, a contradiction.
\end{proof}

\subsection{One-step global contraction}

We now combine the local estimates with the preceding separation
arguments.

\begin{prop}[Global contraction outside the exceptional set]
\label{prop:globalcontraction}
There is a constant $C_\theta>1$ such that the following holds. Let
$0<\eta<1$, $M>1$, $M'\ge1$, and $s\ge0$, and suppose that
$M'\sqrt\theta\le1/20$. If
$(h,\Delta)\notin\mathcal E_{s,\eta,M,M'}$, then
\begin{equation}\label{eq:d2-globalcontraction}
\int_{B_N(1)}
\widetilde\alpha_{\eta,M,\theta}(a_snh;\Delta)\,dn
\le
C_\theta e^{-\theta s/2}
\widetilde\alpha_{\eta,M,\theta}(h;\Delta)
+
C_\theta e^{C_\theta s}.
\end{equation}
\end{prop}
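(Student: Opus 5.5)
We bound $\widetilde\alpha_{\eta,M,\theta}(a_snh;\Delta)$ pointwise in $n$ by the sum of its five constituents and integrate each one separately. The constant term $1$ is trivial. The degree-$1$ and degree-$3$ terms are handled by \Cref{lem:d2-noncritical-height-contribution}, which already gives the rate $e^{-(1-\sqrt\theta)s}\le e^{-\theta s/2}$. The remaining work is the two degree-$2$ suprema. By \eqref{eq:d2-log-lipschitz-wedge}, any plane $V$ contributing at $a_snh$ satisfies $\|hw\|\le e^{4(s+1)}$, where $w=\mathsf w_{\Delta,V}$. So only the finite family $\mathcal P_2$ of $\Delta$-rational two-planes with $\|h\mathsf w_{\Delta,V}\|\le e^{4(s+1)}$ matters. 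We split $\mathcal P_2$ into quasi-null planes ($w\in\mathcal H_{\eta,M}$) and non-quasi-null planes.

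First we reduce each supremum to a single dominant plane. For two distinct planes $V,W\in\mathcal P_2$, the intersection-sum inequality \eqref{eq:intersection-sum-d2} controls $\|hw_V\|\,\|hw_W\|$ from below. Indeed, $V\cap W$ and $V+W$ have dimension in $\{0,1\}\times\{3,4\}$ or $\{1\}\times\{3\}$, and \eqref{eq:auxiliary-comparisons} bounds their covolumes below by $\widetilde\alpha(h;\Delta)^{-1/(1-\theta)}$ (raised to the power at most $2$). Hence
$$\|hw_V\|^{-1}\|hw_W\|^{-1}\ll \widetilde\alpha(h;\Delta)^{2/((1-\theta)(1+\sqrt\theta))}.$$
With $V_0$ chosen to minimize $\|hw\|$, every other plane has $\|hw\|^{-1}\ll\widetilde\alpha^{1/((1-\theta)(1+\sqrt\theta))}$. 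That exponent is strictly below $1$ by a margin of order $\sqrt\theta$. For non-dominant planes we bound $\phi_\theta(a_snhw)^{1+\theta}$ crudely. The log-Lipschitz estimate \eqref{eq:d2-log-lipschitz-phi} gives a factor $e^{4(1+\theta)(s+1)}$, and the key point is that $\phi_\theta(hw)$ is polynomially comparable to $\|hw\|^{-1}$ \emph{off the exceptional set}. Since $(h,\Delta)\notin\mathcal E_{s,\eta,M,M'}$ and $w\notin\mathcal H_{\eta,M}$, the distances $\|hw-\pi_{c}hw\|$ and $\|hw-\pi_r hw\|$ are at least $\varepsilon_{s,M'}(h;\Delta)$. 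This yields
$$\phi_\theta(hw)\le \|hw\|^{-1+4\theta}\varepsilon_{s,M'}^{-4\theta}.$$
Now $\varepsilon_{s,M'}^{-4\theta}$ is either $e^{4\theta M's}$ or $\alpha(h\Delta)^{4\theta M'}\ll\widetilde\alpha^{8\theta M'}$. The hypothesis $M'\sqrt\theta\le1/20$ makes $8\theta M'\le \sqrt\theta/2$, so the total exponent of $\widetilde\alpha$ stays below $1-c\sqrt\theta$. The resulting bound $e^{Cs}\widetilde\alpha^{1-c\sqrt\theta}$ is then absorbed via Young's inequality \eqref{eq:scaled-young-d2} with $\rho=e^{-\theta s/2}$, producing $e^{-\theta s/2}\widetilde\alpha+C e^{C s}$. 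The same crude argument, using only the exponent $1-\theta$, handles all non-dominant quasi-null planes.

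Second, we treat the dominant planes. For a quasi-null dominant plane, \Cref{prop:product-local-d2}(ii) with $\beta=1-\theta$ gives contraction at rate $e^{-\theta s}$ applied to $\|hw\|^{-(1-\theta)}$. If $\|hw\|\le1$, this is at most $\widetilde\alpha(h;\Delta)$; if $\|hw\|>1$, the term is bounded by a constant. For a non-quasi-null dominant plane, \Cref{lem:weighted-local-d2} gives $C_\theta e^{-\theta s}\phi_\theta(hw)^{1+\theta}$. If $\|hw\|\le1$, this is directly bounded by $\widetilde\alpha$. If $1<\|hw\|\le e^{4(s+1)}$, the plane is not counted in $\widetilde\alpha(h;\Delta)$, so we again use the exceptional-set lower bound on the distances. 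This gives $\phi_\theta(hw)^{1+\theta}\le \varepsilon_{s,M'}^{-4\theta(1+\theta)}$, which is at most $e^{Cs}$ or at most $\widetilde\alpha^{\sqrt\theta}$, and Young's inequality absorbs it as before.

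The main obstacle is this bookkeeping with the weight $\phi_\theta$. It is finite but can be huge for planes near $\mathcal M_c\cup\mathcal M_r$, and the exceptional set together with the constraint $M'\sqrt\theta\le1/20$ are exactly what keep the exponents subcritical. The delicate case to check is when $\alpha(h\Delta)>e^{5(s+1)}$, where $\varepsilon$ is tied to $\alpha$ rather than to $s$. There we must verify, using \eqref{eq:auxiliary-comparisons}, that the power $\alpha^{4\theta M'(1+\theta)}$ combines with the intersection-sum gain to give an exponent strictly less than $1$ on $\widetilde\alpha$.
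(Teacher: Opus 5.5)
Your argument is correct and uses the same ingredients as the paper. The degree-$1$/$3$ terms and the quasi-null supremum are handled essentially as in the paper: a dominant plane chosen by smallest $\|hw\|$, \Cref{prop:product-local-d2}(ii) with $\beta=1-\theta$, and intersection-sum plus Young for the remaining planes.

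The difference is in the non-quasi-null supremum. The paper selects planes by the size of the weight. It lets $\Phi_{\mathrm{nq}}$ be the largest $\phi_\theta(hw)^{1+\theta}$ over planes with $\|hw\|\le1$, and disposes of planes with weight below $e^{-12(s+1)}\Phi_{\mathrm{nq}}$ by \eqref{eq:d2-log-lipschitz-phi} alone. It treats $1<\|hw\|\le e^{4(s+1)}$ via the non-exceptional hypothesis. It then splits into two cases:
\begin{itemize}
\item at most one near-maximal plane, handled by \Cref{lem:weighted-local-d2};
\item two near-maximal planes, where intersection-sum and $\phi_\theta(hw)\le\|hw\|^{-1+4\theta}\varepsilon_{s,M'}(h;\Delta)^{-4\theta}$ force $\Phi_{\mathrm{nq}}\ll e^{C_\theta s}\widetilde\alpha_{\eta,M,\theta}(h;\Delta)^{1-\sqrt\theta/4}$.
\end{itemize}

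You instead pick the dominant plane by smallest $\|hw\|$ and apply \Cref{lem:weighted-local-d2} only to it. Every other non-quasi-null plane is bounded through the same comparison of $\phi_\theta$ with $\|\cdot\|^{-1+4\theta}$, plus intersection-sum with the dominant plane. This is more uniform, since it is the same one-dominant-plane template used in degrees $1$, $3$ and for the quasi-null part, and it avoids the threshold set and the case split. The cost is that you invoke the non-exceptional hypothesis for every non-dominant plane with $\|hw\|\le1$, whereas the paper needs it there only in its second case. That is harmless, since the hypothesis is in force anyway. Your exponent bookkeeping is right: under $M'\sqrt\theta\le1/20$ the total power of $\widetilde\alpha$ is $1-c\sqrt\theta$.

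One justification should be corrected. The bound $\|hw_V\|^{-1}\|hw_W\|^{-1}\ll\widetilde\alpha^{2/((1-\theta)(1+\sqrt\theta))}$ does not follow from \eqref{eq:auxiliary-comparisons}. That comparison only gives $\widetilde\alpha^{2/(1-\theta)}$, which would make every subsequent exponent supercritical. The correct argument runs as follows:
\begin{itemize}
\item For distinct two-planes, $(\dim V\cap W,\dim(V+W))$ is $(0,4)$ or $(1,3)$.
\item Hence \eqref{eq:intersection-sum-d2} gives $\|hw_V\|^{-1}\|hw_W\|^{-1}\ll\max\{1,\alpha_1(h\Delta)\alpha_3(h\Delta)\}$.
\item The degree-$1$ and degree-$3$ terms in the definition of $\widetilde\alpha_{\eta,M,\theta}$ give $\alpha_1(h\Delta),\alpha_3(h\Delta)\le\widetilde\alpha_{\eta,M,\theta}(h;\Delta)^{1/(1+\sqrt\theta)}$.
\end{itemize}
This yields the sharper exponent $2/(1+\sqrt\theta)$. \eqref{eq:auxiliary-comparisons} is the right tool only for converting $\varepsilon_{s,M'}(h;\Delta)=\alpha(h\Delta)^{-M'}$ into a power of $\widetilde\alpha$, as you do. With this fix the rest of your argument goes through unchanged.
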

\begin{proof}
The degree-$1$ and degree-$3$ terms are bounded by
\eqref{eq:d2-noncritical-tilde-bound}.

We first consider the quasi-null supremum in
\eqref{eq:d2-global-tilde-height}. Let $\mathcal P$ be the finite set of
quasi-null rational two-planes $V$ satisfying
$\|h\mathsf w_{\Delta,V}\|\le e^{4(s+1)}$. If
$\mathcal P\ne\emptyset$, choose $V_0\in\mathcal P$ maximizing
$\|h\mathsf w_{\Delta,V}\|^{-(1-\theta)}$.

For distinct $V,W\in\mathcal P$, the intersection-sum inequality gives
$$
\|h\mathsf w_{\Delta,V}\|^{-(1-\theta)}
\|h\mathsf w_{\Delta,W}\|^{-(1-\theta)}
\ll
\widetilde\alpha_{\eta,M,\theta}(h;\Delta)^{
2(1-\theta)/(1+\sqrt\theta)}.
$$
Hence, pointwise in $n$, the quasi-null supremum is at most
$$
1+
\|a_snh\mathsf w_{\Delta,V_0}\|^{-(1-\theta)}
+
e^{4(1-\theta)(s+1)}
\widetilde\alpha_{\eta,M,\theta}(h;\Delta)^{
(1-\theta)/(1+\sqrt\theta)},
$$
where the middle term is omitted if $\mathcal P$ is empty. Since
$
\frac{1-\theta}{1+\sqrt\theta}=1-\sqrt\theta<1,
$
we may integrate the middle term using
\eqref{eq:d2-subcritical-degree-two-local} with exponent $1-\theta$ and
apply \eqref{eq:scaled-young-d2} to the last term with
$\rho=e^{-\theta s/2}$. This gives
\begin{equation}\label{eq:quasinull-global-bound}
\begin{aligned}
&\int_{B_N(1)}
\sup_{\substack{V:\ \mathsf w_{\Delta,V}\in\mathcal H_{\eta,M}\\
0<\|a_snh\mathsf w_{\Delta,V}\|\le1}}
\|a_snh\mathsf w_{\Delta,V}\|^{-(1-\theta)}\,dn \ll
e^{-\theta s/2}
\widetilde\alpha_{\eta,M,\theta}(h;\Delta)+e^{C_\theta s}.
\end{aligned}
\end{equation}

It remains to bound the non-quasi-null supremum. Set
$$
\Phi_{\mathrm{nq}}(h;\Delta)
:=
\max\left\{
1,
\sup_{\substack{V:\ \mathsf w_{\Delta,V}\notin\mathcal H_{\eta,M}\\
0<\|h\mathsf w_{\Delta,V}\|\le1}}
\phi_\theta(h\mathsf w_{\Delta,V})^{1+\theta}
\right\},
$$
and let $\mathcal P_2$ be the finite collection of planes occurring in
this supremum for which
$$
\phi_\theta(h\mathsf w_{\Delta,V})^{1+\theta}
>
e^{-12(s+1)}\Phi_{\mathrm{nq}}(h;\Delta).
$$

Fix a $\Delta$-rational two-plane $V$, write
$w=\mathsf w_{\Delta,V}$, and suppose that
$$
w\notin\mathcal H_{\eta,M},
\qquad
0<\|a_snhw\|\le1.
$$
Then $\|hw\|\le e^{4(s+1)}$. If $\|hw\|\le1$ and
$V\notin\mathcal P_2$, the log-Lipschitz estimate
\eqref{eq:d2-log-lipschitz-phi} gives
$$
\phi_\theta(a_snhw)^{1+\theta}
\le
e^{-4(s+1)}\Phi_{\mathrm{nq}}(h;\Delta).
$$
If $1<\|hw\|\le e^{4(s+1)}$, the assumption that
$(h,\Delta)\notin\mathcal E_{s,\eta,M,M'}$ gives
$$
\phi_\theta(a_snhw)^{1+\theta}
\ll
e^{4(1+\theta)(s+1)}
\varepsilon_{s,M'}(h;\Delta)^{-4\theta(1+\theta)}.
$$

If $\varepsilon_{s,M'}(h;\Delta)=e^{-M's}$, this is
$O(e^{C_\theta s})$, since $M'\sqrt\theta\le1/20$. If instead
$\varepsilon_{s,M'}(h;\Delta)=\alpha(h\Delta)^{-M'}$, it is at most
$$
e^{4(1+\theta)(s+1)}
\widetilde\alpha_{\eta,M,\theta}(h;\Delta)^{
4M'\theta(1+\theta)/(1-\theta)}.
$$
The assumption $M'\sqrt\theta\le1/20$ gives
$$
\frac{4M'\theta(1+\theta)}{1-\theta}
\le
\frac{\sqrt\theta}{4}
<
\frac12.
$$
Since
$\Phi_{\mathrm{nq}}(h;\Delta)\le\widetilde\alpha_{\eta,M,\theta}(h;\Delta)$,
Young's inequality now gives
\begin{equation}\label{eq:outside-critical-competitors}
\begin{aligned}
&\int_{B_N(1)}
\sup_{\substack{V\notin\mathcal P_2:\\
\mathsf w_{\Delta,V}\notin\mathcal H_{\eta,M}\\
0<\|a_snh\mathsf w_{\Delta,V}\|\le1}}
\phi_\theta(a_snh\mathsf w_{\Delta,V})^{1+\theta}\,dn \ll
e^{-\theta s/2}
\widetilde\alpha_{\eta,M,\theta}(h;\Delta)+e^{C_\theta s}.
\end{aligned}
\end{equation}

\smallskip
\noindent\textbf{Case 1: $\#\mathcal P_2\le1$.}

In this case, \Cref{lem:weighted-local-d2} gives
$$
\int_{B_N(1)}
\sup_{V\in\mathcal P_2}
\phi_\theta(a_snh\mathsf w_{\Delta,V})^{1+\theta}\,dn
\ll
e^{-\theta s}
\widetilde\alpha_{\eta,M,\theta}(h;\Delta).
$$

\smallskip
\noindent\textbf{Case 2: $\#\mathcal P_2\ge2$.}

Suppose that $\mathcal P_2$ contains two distinct planes with Pl\"ucker
vectors $w_1$ and $w_2$. The definition of $\mathcal P_2$, exclusion from
the exceptional set, and \eqref{eq:intersection-sum-d2} imply
$$
\begin{aligned}
\Phi_{\mathrm{nq}}(h;\Delta)^2
\ll\;&
e^{24(s+1)}
\varepsilon_{s,M'}(h;\Delta)^{-8\theta(1+\theta)}
\max\{1,\alpha_1(h\Delta)\alpha_3(h\Delta)\}
^{(1-4\theta)(1+\theta)}.
\end{aligned}
$$
If $\varepsilon_{s,M'}(h;\Delta)=e^{-M's}$, then
$$
\Phi_{\mathrm{nq}}(h;\Delta)
\ll
e^{12(s+1)}
e^{4M'\theta(1+\theta)s}
\widetilde\alpha_{\eta,M,\theta}(h;\Delta)^{
(1-4\theta)(1+\theta)/(1+\sqrt\theta)}.
$$
If $\varepsilon_{s,M'}(h;\Delta)=\alpha(h\Delta)^{-M'}$, then
$$
\Phi_{\mathrm{nq}}(h;\Delta)
\ll
e^{12(s+1)}
\widetilde\alpha_{\eta,M,\theta}(h;\Delta)^{
\frac{(1-4\theta)(1+\theta)}{1+\sqrt\theta}
+
\frac{4M'\theta(1+\theta)}{1-\theta}
}.
$$
Under the assumptions $M'\sqrt\theta\le1/20$ and $\theta<10^{-4}$,
$$
\frac{(1-4\theta)(1+\theta)}{1+\sqrt\theta}
\le
1-\frac{\sqrt\theta}{2},
\qquad
\frac{4M'\theta(1+\theta)}{1-\theta}
\le
\frac{\sqrt\theta}{4}.
$$
Thus, in both cases, \Cref{lem:d2-log-lipschitz} bounds the contribution
of $\mathcal P_2$ by
$$
e^{C_\theta s}
\widetilde\alpha_{\eta,M,\theta}(h;\Delta)^{1-\sqrt\theta/4}.
$$
Applying \eqref{eq:scaled-young-d2} with
$\rho=e^{-\theta s/2}$ gives
$$
\sup_{V\in\mathcal P_2}
\phi_\theta(a_snh\mathsf w_{\Delta,V})^{1+\theta}
\ll
e^{-\theta s/2}
\widetilde\alpha_{\eta,M,\theta}(h;\Delta)+e^{C_\theta s}.
$$

Together with \eqref{eq:outside-critical-competitors}, this bounds the
non-quasi-null supremum. Combining this with
\eqref{eq:quasinull-global-bound} and the degree-$1$ and degree-$3$
estimate proves \eqref{eq:d2-globalcontraction}.
\end{proof}
\section{Avoidance estimates}
\label{s:avoidance}

We now show that the exceptional set introduced in the previous section is
visited by only a small set of parameters in the expanding horospherical
averages. The main input is the quantitative nondivergence theorem of
\cite{kleinbock:2008}. In the present $2\times2$ setting, degree $2$ is the
only critical exterior degree, and we write
$$
E:=\wedge^2\M_2(\mathbb R)
=
\mathcal M_c\oplus\mathcal M_r.
$$
The obstruction alternative in quantitative nondivergence produces a
rational subspace of $E$ lying close to one of the exceptional summands.
The Diophantine condition rules out this alternative by converting such a
subspace into a rational split form that approximates the determinant form
too closely.

Throughout this section, fix a lattice $\Delta\in X$ that is not
determinant-rational and satisfies \eqref{fs} for some $0<\eta<1$ and
$M>1$. Fix also a basis $\mathcal B$ for which
$Q_{\Delta,\mathcal B}$ is $(\eta_0,M_0)$-split-Diophantine. We write
$\Leb$ for coordinate Lebesgue measure on $N\simeq\mathbb R^2$.

If $W<E$ is a three-dimensional subspace rational with respect to
$\wedge^2\Delta$, let $\mathsf w_{\wedge^2\Delta,W}$ denote the primitive
Pl\"ucker vector of the lattice $W\cap\wedge^2\Delta$. For
three-dimensional subspaces $W_1,W_2<E$, choose unit vectors $z_i$
spanning $\wedge^3W_i$ and set
$$
\operatorname{dist}(W_1,W_2)
:=
\min_{\pm}\|z_1\pm z_2\|.
$$
This quantity is independent of the choices of $z_1$ and $z_2$.

The first lemma reconstructs a rational split quadratic form from a
rational three-dimensional subspace lying close to one of the exceptional
summands.

\begin{lem}[Reconstruction near an exceptional summand]
\label{lem:rational-ruling-reconstruction-d2}
There exist constants $c,C>0$, depending only on $\mathcal B$, with the
following property. Let $W<E$ be a three-dimensional subspace rational
with respect to $\wedge^2\Delta$, and suppose that $W$ contains a nonzero
vector of the form $u\wedge v$ with $u,v\in\Delta$. If
$\operatorname{dist}(W,\mathcal M_c)<c$, then there is a rational split
quadratic form $Q_W$ such that
$$
\operatorname{ht}(Q_W)
\le
C\|\mathsf w_{\wedge^2\Delta,W}\|^3,
\qquad
\operatorname{dist}([Q_{\Delta,\mathcal B}],[Q_W])
\le
C\operatorname{dist}(W,\mathcal M_c).
$$
The same statement holds with $\mathcal M_r$ in place of $\mathcal M_c$.
\end{lem}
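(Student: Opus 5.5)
The plan is to build a $\GL_4$-equivariant algebraic map from three-dimensional subspaces of $E=\wedge^2\M_2(\mathbb R)$ to projective classes of quadratic forms. It should send $\mathcal M_c$ (and $\mathcal M_r$) to $[\det]$ and be polynomial of degree $3$ in the Plücker coordinates of the subspace. Rationality, the height bound and the distance bound then all follow from this one map.

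\textbf{Geometric input.} Work in the coordinates given by $g_{\mathcal B}$, so that $\Delta=\mathbb Z^4$ and $\wedge^2\Delta=\wedge^2\mathbb Z^4$. On $E$ we have the wedge pairing $\omega(x,y)=x\wedge y\in\wedge^4\simeq\mathbb R$, and the decomposable vectors form the Klein quadric $\{\omega(x,x)=0\}$.
\begin{enumerate}[label=\textnormal{(\arabic*)}]
\item By the Pl\"ucker relation computed in the proof of \Cref{lem:d2-reconstruction-from-quasinull}, $\omega|_{\mathcal M_c}$ is nondegenerate and indefinite. This is an open condition on the Grassmannian, so for $\operatorname{dist}(W,\mathcal M_c)<c$ the restriction $\omega|_W$ is also nondegenerate and indefinite. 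Hence $C_W=\{x\in W:\omega(x,x)=0\}$ is a smooth conic.
\item By Lemma~\ref{lem:isotropic-plucker-d2}, the points of $C_{\mathcal M_c}$ are exactly the column-isotropic planes, i.e.\ one ruling of $\{\det=0\}$. For $W$ near $\mathcal M_c$, distinct points of $C_W$ satisfy $\omega(x,y)\neq0$, since $\omega|_W$ is nondegenerate. So the corresponding planes are pairwise transverse, and $C_W$ is one ruling of a unique nondegenerate quadric $\{P_W=0\}$.
\item Uniqueness follows exactly as in the $\ker L$ argument of \Cref{lem:d2-reconstruction-from-quasinull}: any three pairwise transverse planes determine, up to scale, a unique quadratic form vanishing on them.
\end{enumerate}
This defines $W\mapsto[P_W]$ near $\mathcal M_c$. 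By construction it is $\GL_4$-equivariant and sends $\mathcal M_c$ to $[Q_{\Delta,\mathcal B}]$.

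\textbf{Algebraic form of the map.} I would realize $P_W$ as a polynomial map $F:\wedge^3E\to\operatorname{Sym}^2((\mathbb R^4)^*)$ with integer coefficients, homogeneous of degree $3$ in $z=\mathsf w_{\wedge^2\Delta,W}$. The natural candidate is built from the adjugate of the Gram matrix of $\omega|_W$, which is the dual conic. Concretely, I would use the contraction
\[
\wedge^3E\otimes\wedge^3E\otimes\wedge^3E\longrightarrow\operatorname{Sym}^2(\mathbb R^4)^*\otimes(\det)^{k}
\]
obtained by writing $W$ as a graph over $\mathcal M_c$ and clearing denominators. Equivariance together with $F(\mathsf w_{\mathcal M_c})\neq0$ will identify $F$ up to a constant. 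Once $F$ exists:
\begin{enumerate}[label=\textnormal{(\arabic*)}]
\item For rational $W$, $F(z)$ is an integer vector with $\|F(z)\|\ll\|z\|^3$. This gives $\operatorname{ht}(Q_W)\le C\|\mathsf w_{\wedge^2\Delta,W}\|^3$.
\item $F$ is smooth, and $F(\mathsf w_{\mathcal M_c})\neq0$, so $W\mapsto[F(z)]$ is Lipschitz on the $c$-neighbourhood of $\mathcal M_c$. This gives $\operatorname{dist}([Q_{\Delta,\mathcal B}],[Q_W])\le C\operatorname{dist}(W,\mathcal M_c)$.
\end{enumerate}

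\textbf{Splitting over $\mathbb Q$.} It remains to show that $Q_W$ is $\mathbb Q$-equivalent to $x_1x_4-x_2x_3$, and this is where the hypothesis on $u\wedge v$ enters. The vector $u\wedge v\in W$ is a rational point of $C_W$, so $\operatorname{span}(u,v)$ is a rational two-dimensional totally isotropic subspace for $Q_W$. A nondegenerate rational quadratic form in four variables with a rational isotropic plane is hyperbolic over $\mathbb Q$, by Witt decomposition. It is therefore $\mathbb Q$-equivalent to $x_1x_4-x_2x_3$, so $[Q_W]\in\mathscr S_{\mathbb Q}$. The $\mathcal M_r$ case is identical after transposing.

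\textbf{Main obstacle.} The hardest step is producing $F$ explicitly with degree exactly $3$ and integral coefficients, rather than merely some polynomial degree. If the intrinsic construction is awkward, I would fall back on the method of \Cref{lem:d2-reconstruction-from-quasinull}. There one parametrizes $C_W$ rationally from the point $u\wedge v$, takes three rational points of controlled height, and solves the $9\times10$ linear system. That route yields a polynomial height bound and the linear distance bound, but with a worse exponent than $3$.
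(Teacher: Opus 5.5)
\textbf{There is a genuine gap: the map $F$ is never constructed.} The downstream steps are fine. Suppose you had a polynomial covariant $F$ on $\wedge^3E$ with integer coefficients, of degree at most $3$, nonzero at $\mathsf w_{\mathcal M_c}$, with $F(z)$ vanishing on every plane represented by a point of $C_W$. Then your height bound, your Lipschitz distance bound and your Witt-decomposition argument for $\mathbb Q$-splitting would all go through. But the existence of such an $F$ is essentially the whole content of the lemma, and none of your suggestions produces it:
\begin{itemize}
\item The adjugate of the Gram matrix of $\omega|_W$ is a dual conic on $W^*$, not a quadratic form on $\mathbb R^4$.
\item Writing $W$ as a graph over $\mathcal M_c$ and clearing denominators gives a local rational expression, with no control on integrality or degree.
\item The contraction $\wedge^3E^{\otimes3}\to\operatorname{Sym}^2(\mathbb R^4)^*\otimes(\det)^k$ is neither specified nor shown to be nonzero at $\mathsf w_{\mathcal M_c}$.
\item Equivariance does not identify $F$ up to a constant. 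Up to determinant twists, $\wedge^3E\cong\operatorname{Sym}^2\mathbb R^4\oplus\operatorname{Sym}^2(\mathbb R^4)^*$, which carries a linear covariant and several independent cubic covariants into $\operatorname{Sym}^2(\mathbb R^4)^*$.
\end{itemize}
What is true is weaker: any covariant nonzero at $\mathsf w_{\mathcal M_c}$ has projective value $[P_W]$ on the $\GL_4(\mathbb R)$-orbit of $\mathsf w_{\mathcal M_c}$. Proving this needs a transitivity argument plus the fact that $\det$ spans the $\SL_2\times\SL_2$-invariant quadratic forms, and you give neither. That identification is exactly what you need to know that $\operatorname{span}(u,v)$ is isotropic for $Q_W$; ``by construction'' does not cover it.

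\textbf{The fallback does not rescue the statement.} Apart from the worse exponent, parametrizing $C_W$ from $u\wedge v$ produces rational points whose heights grow with $\|u\wedge v\|$. The hypotheses do not bound $\|u\wedge v\|$ in terms of $\|\mathsf w_{\wedge^2\Delta,W}\|$.

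\textbf{The missing idea is the paper's cofactor construction.} It gives $F$ explicitly and makes the isotropy automatic.
\begin{itemize}
\item \emph{Definition.} In the coordinates given by $g_{\mathcal B}$, take a basis $\omega_1,\omega_2,\omega_3$ of $W'$. Let $L_{W'}(x)$ be the $3\times4$ matrix with $(L_{W'}(x)y)_i\,e_1\wedge e_2\wedge e_3\wedge e_4=x\wedge\omega_i\wedge y$. Both $x$ and the signed cofactor vector $\mathcal C_{W'}(x)$ lie in $\ker L_{W'}(x)$, and a polynomial divisibility argument gives $\mathcal C_{W'}(x)=Q_W(x)\,x$ for a quadratic form $Q_W$.
\item \emph{Height.} Each maximal minor is an alternating trilinear function of $(\omega_1,\omega_2,\omega_3)$. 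So $Q_W$ is integral for an integral basis and depends only (in fact linearly) on $\omega_1\wedge\omega_2\wedge\omega_3$; the natural covariant is linear, not cubic. A Minkowski-reduced basis of $W'\cap\wedge^2\mathbb Z^4$ then gives $\operatorname{ht}(Q_W)\ll\|\mathsf w_{\wedge^2\Delta,W}\|^3$.
\item \emph{Distance.} The identity $L_{(\wedge^2g)U}(gx)\,g=\det(g)L_U(x)$ and the computation $\mathcal C_{\mathcal M_c}(x)=c_0(x_1x_4-x_2x_3)\,x$ identify the form attached to $(\wedge^2g_{\mathcal B}^{-1})\mathcal M_c$ with $Q_{\Delta,\mathcal B}$. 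Smooth dependence on the basis then gives the distance bound.
\item \emph{Isotropy.} If $u\wedge v$ pulls back to $u'\wedge v'\in W'$, then for $x\in\operatorname{span}(u',v')$ the row combination corresponding to $u'\wedge v'$ is $y\mapsto x\wedge u'\wedge v'\wedge y\equiv0$. Hence $\operatorname{rank}L_{W'}(x)\le2$ and $Q_W(x)=0$. No regulus geometry is needed, and your Witt argument then finishes.
\end{itemize}
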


\begin{proof}
Let $g_{\mathcal B}:\mathbb R^4\to\M_2(\mathbb R)$ send the standard basis
to $\mathcal B$, and put
$$
W'
=
(\wedge^2g_{\mathcal B}^{-1})W,
\qquad
W_0
=
(\wedge^2g_{\mathcal B}^{-1})\mathcal M_c.
$$
Since $g_{\mathcal B}$ is fixed, all norms and distances change by only
fixed factors.

Let $U<\wedge^2\mathbb R^4$ be a three-dimensional subspace with basis
$\omega_1,\omega_2,\omega_3$. For $x\in\mathbb R^4$, define the
$3\times4$ matrix $L_U(x)$ by
$$
x\wedge\omega_i\wedge y
=
(L_U(x)y)_i\,
e_1\wedge e_2\wedge e_3\wedge e_4
\qquad
(1\le i\le3).
$$
 Then
$L_U(x)x=0$. Let $\mathcal C_U(x)$ denote the signed cofactor vector of
$L_U(x)$.

In matrix-unit coordinates, a direct calculation gives
$$
\mathcal C_{\mathcal M_c}(x)
=
c_0(x_1x_4-x_2x_3)x
$$
for some $c_0\ne0$. More generally, if $g\in\GL_4(\mathbb R)$, then
$$
L_{(\wedge^2g)U}(gx)g
=
\det(g)L_U(x).
$$
Taking signed minors shows that
$\mathcal C_{(\wedge^2g)U}(gx)$ is a fixed nonzero scalar multiple of
$g\mathcal C_U(x)$. It follows that the quadratic form associated to
$(\wedge^2g)U$ is proportional to the form associated to $U$ composed
with $g^{-1}$. In particular, the form associated to $W_0$ is proportional
to $Q_{\Delta,\mathcal B}$.

For any $x$ satisfying $Q_{\Delta,\mathcal B}(x)\ne0$, the matrix
associated to $W_0$ has rank $3$. After decreasing $c$, the matrix
$L_{W'}(x)$ has rank $3$ on a nonempty open set. On this set, both $x$ and
$\mathcal C_{W'}(x)$ span $\ker L_{W'}(x)$, and hence
$$
x_i\mathcal C_{W',j}(x)-x_j\mathcal C_{W',i}(x)=0
\qquad
(1\le i,j\le4).
$$
Since these are polynomial identities, they hold for every $x$. For
$i\ne j$, the coprimality of $x_i$ and $x_j$ implies that $x_i$ divides
$\mathcal C_{W',i}$. Writing
$\mathcal C_{W',i}=x_iQ_i$, the same identities give $Q_i=Q_j$ for all
$i,j$. Thus there is a quadratic form $Q_W$ such that
\begin{equation}\label{eq:cofactor-reconstruction}
\mathcal C_{W'}(x)=Q_W(x)x.
\end{equation}

Changing the basis of $W'$ multiplies $Q_W$ by a nonzero scalar. The
coefficients of $Q_W$ depend polynomially on a basis of $W'$. Choosing
this basis by orthogonally projecting a fixed basis of $W_0$ therefore
gives
$$
\operatorname{dist}([Q_{\Delta,\mathcal B}],[Q_W])
\ll
\operatorname{dist}(W,\mathcal M_c).
$$

By Minkowski's second theorem and the fixed positive lower bound for the
norm of a nonzero vector of $\wedge^2\mathbb Z^4$, the lattice
$W'\cap\wedge^2\mathbb Z^4$ has a basis
$\omega_1,\omega_2,\omega_3$ satisfying
$$
\|\omega_i\|
\ll_{\mathcal B}
\|\mathsf w_{\wedge^2\Delta,W}\|.
$$
For this basis, the coefficients of $L_{W'}$ are integral and have size
$
O\bigl(\|\mathsf w_{\wedge^2\Delta,W}\|\bigr).
$
The cofactor formula therefore gives an integral multiple of $Q_W$ whose
coefficients have size
$
O\bigl(\|\mathsf w_{\wedge^2\Delta,W}\|^3\bigr),
$
which proves the height bound.

Finally,
$(\wedge^2g_{\mathcal B}^{-1})(u\wedge v)\in W'$. If
$
x\in
\operatorname{span}
\{g_{\mathcal B}^{-1}u,g_{\mathcal B}^{-1}v\},
$
then one linear combination of the rows of $L_{W'}(x)$ vanishes
identically as a linear form in $y$. Thus
$\operatorname{rank}L_{W'}(x)\le2$, and
\eqref{eq:cofactor-reconstruction} gives $Q_W(x)=0$. Hence $Q_W$
vanishes on a rational two-plane. For $W$ sufficiently close to $\mathcal M_c$, the form $Q_W$ is
nondegenerate. Since it vanishes on a rational two-plane, it is split over
$\mathbb Q$. The proof for $\mathcal M_r$ is identical.
\end{proof}

For $s\ge0$ and $0<\varepsilon\le1$, define
$$
\Xi(s,\varepsilon)
:=
\left\{
0\ne w\in E:
\|w\|\le e^{4(s+1)},\
\min\{\|w-\pi_cw\|,\|w-\pi_rw\|\}\le\varepsilon
\right\}.
$$
Let $\mathcal K_{\eta,M}(s,\varepsilon)$ be the set of pairs $(h,\Delta)$ for which there is a $\Delta$-rational two-plane $V$, with $w=\mathsf w_{\Delta,V}\notin\mathcal H_{\eta,M}$, such that $hw\in\Xi(s,\varepsilon)$.

We recall the quantitative-nondivergence terminology. A continuous
function $f$ on a ball $B\subset\mathbb R^d$ is called
\emph{$(C,\tau)$-good} if, for every ball $B'\subset B$ satisfying
$\sup_{B'}|f|>0$ and every $u>0$,
$$
\Leb\{x\in B':|f(x)|<u\}
\le
C
\left(
\frac{u}{\sup_{B'}|f|}
\right)^\tau
\Leb(B').
$$
A map $g$ is called $(C,\tau)$-good if
$n\mapsto\|(\wedge^rg(n))v\|$ is $(C,\tau)$-good for every exterior
degree $r$ and every $v\in\wedge^rE$.

\begin{lem}[Quantitative nondivergence for selected vectors]
\label{lem:d2-prescribed-qnd}
For every integer $d_0\ge1$, there exist constants $C,\tau>0$, depending
only on $E$, $d_0$, and $B_N(1)$, such that the following holds. Let
$\mathcal L<E$ be a lattice, let $\Upsilon\subset\mathcal L$ be a
collection of primitive vectors, and let
$$
g:B_N(3^6)\to \GL(E)
$$
be a polynomial map whose coordinate functions have degree at most $d_0$
and which is $(C,\tau)$-good. If
$0<\varepsilon\le\rho\le1$, then either
$$
\Leb\left\{
n\in B_N(1):
\min_{w\in\Upsilon}\|g(n)w\|\le\varepsilon
\right\}
\ll
(\varepsilon/\rho)^\tau,
$$
or there are linearly independent vectors
$w_1,\ldots,w_m\in\Upsilon$, with $1\le m\le6$, such that
$$
\sup_{n\in B_N(1)}
\|(\wedge^mg(n))(w_1\wedge\cdots\wedge w_m)\|
<
\rho^m.
$$
\end{lem}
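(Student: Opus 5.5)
This is the Kleinbock--Margulis quantitative nondivergence theorem, in the form of \cite{kleinbock:2008}, with the family of sublattices restricted to those spanned by vectors of $\Upsilon$. The plan is to run the standard flag/induction argument of Kleinbock--Margulis, but with the poset $\mathfrak P$ of $\mathcal L$-rational subspaces replaced by the subposet $\mathfrak P_\Upsilon$ of subspaces $\Gamma_U=U\cap\mathcal L$ with $U$ spanned by vectors of $\Upsilon$. For $U\in\mathfrak P_\Upsilon$ set $\psi_U(n)=\|g(n)\mathsf w_{\mathcal L,U}\|$, the covolume of $g(n)(U\cap\mathcal L)$. The length of $\mathfrak P_\Upsilon$ is at most $\dim E=6$, which is where the bound $m\le6$ comes from. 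Moreover, $\psi_U$ is $(C,\tau)$-good on $B_N(3^6)$ by the hypothesis on $g$, applied in exterior degree $\dim U$.

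The key steps follow Kleinbock's abstract theorem for partially ordered sets. The hypotheses there are: goodness of each $\psi_U$ on the dilated ball $B_N(3^6)$; a uniform lower bound $\sup_{B_N(1)}\psi_U\ge\rho^{\dim U}$; and the discreteness property that, for each $n$, only finitely many $U$ have $\psi_U(n)$ bounded. The second hypothesis is exactly the negation of the second alternative, after choosing a basis $w_1,\ldots,w_m\in\Upsilon$ of $U$. One small point needs care here: $w_1\wedge\cdots\wedge w_m$ is an integer multiple of $\mathsf w_{\mathcal L,U}$, so it is at least as long. The failure of the second alternative for $w_1,\dots,w_m$ therefore gives a lower bound for $\sup\|g(n)(w_1\wedge\cdots\wedge w_m)\|$, and not directly for $\sup\psi_U$. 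I would avoid this issue by stating the hypothesis in terms of the wedge of a chosen spanning family. In the induction, the only sublattices used are spanned by elements of $\Upsilon$ together with the ``marked'' vectors, so the tree of flags can be built using these wedges as the size functions. Since these functions are still norms of $\wedge^m g(n)$ applied to fixed vectors, they remain $(C,\tau)$-good.

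Then run the Kleinbock--Margulis induction on the length of flags. At each point $n\in B_N(1)$ where some $w\in\Upsilon$ has $\|g(n)w\|\le\varepsilon$, one finds a maximal flag of $\rho$-small subspaces in $\mathfrak P_\Upsilon$, together with a Besicovitch covering by balls $B'$ on which the governing $\psi_U$ reaches $\rho^{\dim U}$ on the threefold dilate. The goodness estimate on each ball then bounds the proportion of $B'$ where the relevant $\psi$ drops below $\varepsilon\rho^{\dim U-1}$ by $C(\varepsilon/\rho)^\tau$. Each induction step uses one threefold dilation, and there are at most $6$ steps, which explains the radius $3^6$. Summing over the covering, with Besicovitch multiplicity depending only on the dimension $2$, gives the first alternative with constants depending only on $d_0$ and $E$.

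I expect the main obstacle to be the one step where the restricted poset is used: showing that a minimal short subspace produced by the induction, which is a sum or intersection of $\Upsilon$-spanned spaces, is itself in $\mathfrak P_\Upsilon$. For sums this is immediate. For intersections it fails in general, so I would redesign the induction to use only sums, in the spirit of Kleinbock's ``$\Upsilon$-spanned'' variant. In that version the filtration is built by adjoining vectors of $\Upsilon$ one at a time, and the covolume comparison uses only the submultiplicativity $\|a\wedge b\|\le\|a\|\|b\|$, not the intersection-sum inequality. If that proves too delicate, the fallback is to cite the relevant form of \cite{kleinbock:2008} directly, where this restriction is treated.
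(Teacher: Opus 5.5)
Your proposal is correct and follows the same route as the paper, which simply invokes Kleinbock's quantitative nondivergence theorem \cite{kleinbock:2008} with the poset restricted to $\Upsilon$-spanned subspaces and defers the details to the companion paper. Two small remarks. First, the worry about intersections is unnecessary: the marked-flag step of the Kleinbock--Margulis argument only uses sums $U_{j-1}+\mathbb R v$ with $v\in\Upsilon$, together with $\|a\wedge b\|\le\|a\|\|b\|$. Second, for the integer-multiple issue, take for each $U$ the wedge of an $\Upsilon$-basis of $U$ of minimal norm as the size function; then $\mathbf w_{U_{j-1}}\wedge v$ is an admissible competitor in that step, and the negation of the second alternative gives the lower bound $\sup_{B_N(1)}\psi_U\ge\rho^{\dim U}$ directly.
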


\begin{proof}
This follows from the quantitative nondivergence theorem
\cite[Theorem~2.1]{kleinbock:2008}, with the argument restricted to the
selected primitive vectors in $\Upsilon$. See
\cite[Proposition~7.3]{KimOh_det} for details.
\end{proof}

We shall also use the following elementary expansion property of
nontrivial irreducible $H$-modules.

\begin{lem}[Instability]
\label{lem:d2-instability}
Let $U$ be a nontrivial irreducible $H$-subrepresentation of an exterior
power of $E$. There exist constants $c,\kappa>0$ such that
$$
\sup_{n\in B_N(1)}\|a_tnu\|
\ge
ce^{\kappa t}\|u\|
\qquad
(u\in U,\ t\ge0).
$$
\end{lem}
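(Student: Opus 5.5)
The plan is to reduce to a single factor and use the standard expansion of nontrivial $\SL_2(\mathbb R)$-representations under $b_t$ composed with unipotents. First, since $H\cong\SL_2(\mathbb R)\times\SL_2(\mathbb R)$ (up to a finite center) and $U$ is a nontrivial irreducible $H$-module, we have $U\cong U_1\otimes U_2$, with $U_i$ irreducible for the $i$th factor and at least one of them nontrivial. Suppose first that $U_1$ is nontrivial, and write $U_1\cong\operatorname{Sym}^k(\mathbb R^2)$ with $k\ge1$. As a module for the first factor, $U$ is then isomorphic to $U_1^{\oplus\dim U_2}$. We fix a norm on $U$ that is adapted to this decomposition; since all norms on $U$ are equivalent, this affects only the constant $c$.

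The core one-factor claim is the following. For $\operatorname{Sym}^k(\mathbb R^2)$ with $k\ge1$, there is $c_k>0$ such that $\sup_{|\xi|\le1}\|b_tu_\xi v\|\ge c_k e^{t}\|v\|$ for every $v$ and every $t\ge0$. To prove it, write $v$ in the weight basis $Z_1^{k-j}Z_2^{j}$, on which $b_t$ acts by $e^{(2j-k)t}$. The key quantity is the top-weight coordinate of $u_\xi v$, namely $p_v(\xi)=\sum_j v_j\binom{k}{j}\xi^{k-j}$ up to normalization. This is a polynomial in $\xi$ of degree at most $k$ whose coefficients are, up to fixed nonzero constants, exactly the coordinates of $v$. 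Since the sup norm of a polynomial on $[-1,1]$ is equivalent to the norm of its coefficient vector (finite dimensionality), we get $\sup_{|\xi|\le1}|p_v(\xi)|\gg\|v\|$. The top-weight coordinate of $b_tu_\xi v$ is $e^{kt}p_v(\xi)$, so the supremum is at least $c\,e^{kt}\|v\|\ge c\,e^{t}\|v\|$.

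For a direct sum of copies, we apply this to a component of $v$ whose norm is comparable to $\|v\|$, exactly as in the proof of \Cref{prop:local-contraction-d2}. Because the second factor acts by an element of norm $\ge1$ on the weight structure, it remains to handle $a_tn=(b_tu_{\xi_1},b_tu_{\xi_2})$ acting on $U_1\otimes U_2$. I would take $\xi_2=0$ and note that $b_t$ acting on $U_2$ is diagonal with weights symmetric about zero. I would then pick the top-weight line of $U_1$ tensored with the component of $v$ in some $U_2$-weight vector; this is where care is needed. The cleaner route is to use the top-weight coordinate for the combined torus: $a_t$ acts on $U$ with weights $(2j_1-k_1)+(2j_2-k_2)$. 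The coordinate of weight $k_1+k_2\ge1$ of $a_tn_{\xi_1,\xi_2}u$ equals $e^{(k_1+k_2)t}P_u(\xi_1,\xi_2)$, where $P_u$ is a polynomial in two variables of bidegree at most $(k_1,k_2)$ whose coefficients are a fixed invertible linear image of the coordinates of $u$. Indeed, $u_{\xi_1}\otimes u_{\xi_2}$ expressed in the weight bases is a product of binomial-type unipotent matrices, and the top row of each is $(\xi^{k})$-Vandermonde-like with nonzero entries. Equivalence of norms on polynomials of bounded bidegree on $[-1,1]^2$ then gives $\sup_{B_N(1)}|P_u|\gg\|u\|$. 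Hence the supremum is at least $c\,e^{(k_1+k_2)t}\|u\|$, and we may take $\kappa=1$ (or $\kappa=k_1+k_2$).

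I expect the main obstacle to be the bookkeeping step: verifying that the linear map from $u$ to the coefficients of $P_u$ is injective. This follows because the top row of $u_\xi$ on $\operatorname{Sym}^k$ is $(\binom{k}{j}\xi^{k-j})_j$, whose monomials are distinct, so each coordinate of $u$ contributes a distinct monomial $\xi_1^{k_1-j_1}\xi_2^{k_2-j_2}$. The identification of exterior powers of $E$ with tensor products of symmetric powers is only needed abstractly, via complete reducibility and the classification of irreducible $\SL_2\times\SL_2$-modules.
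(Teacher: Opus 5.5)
Your final route is correct and is essentially the paper's proof: read off the one-dimensional top-weight coordinate of $a_tn_{\xi_1,\xi_2}u$ as $e^{(k_1+k_2)t}P_u(\xi_1,\xi_2)$, note that the coefficients of $P_u$ correspond bijectively to the coordinates of $u$ (distinct monomials $\xi_1^{k_1-j_1}\xi_2^{k_2-j_2}$), and use equivalence of norms on polynomials of bidegree at most $(k_1,k_2)$ on $[-1,1]^2$. The preliminary one-factor reduction can simply be dropped. Its remark that the second factor acts by something of norm $\ge1$ is not accurate, since $b_t$ contracts the low weights of $U_2$, but you already replace it with the combined-weight argument.
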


\begin{proof}
Write
$
U
\simeq
\operatorname{Sym}^p(\mathbb R^2)
\otimes
\operatorname{Sym}^q(\mathbb R^2)$ where $p+q>0$,
and realize the two factors as spaces of homogeneous polynomials. The
coefficient of $Y^p\otimes Y^q$ in $n_{\xi_1,\xi_2}u$ is a polynomial in
$(\xi_1,\xi_2)$ whose coefficients are nonzero scalar multiples of the
coordinates of $u$. In particular, this polynomial is not identically zero
when $u\ne0$.

The resulting coefficient map from $U$ into a finite-dimensional
polynomial space is injective. Compactness of the unit sphere in $U$
therefore gives a uniform lower bound for the supremum of this coefficient
on $[-1,1]^2$. Since $a_t$ multiplies
$Y^p\otimes Y^q$ by $e^{(p+q)t}$, the result follows.
\end{proof}

We can now prove the basic avoidance estimate.

\begin{prop}[Avoidance of near-exceptional directions]
\label{prop:avoidance0}
There exists $B_0=B_0(\Delta,\eta,M)>1$ such that, if
$
0<\varepsilon<e^{-B_0Ms}$ and $
t\ge B_0\log(1/\varepsilon),
$
then
\begin{equation}\label{eq:d2-avoidance0}
\Leb\left\{
n\in B_N(1):
(a_tn,\Delta)\in\mathcal K_{\eta,M}(s,\varepsilon)
\right\}
\ll
\varepsilon^{1/(B_0M)}.
\end{equation}
\end{prop}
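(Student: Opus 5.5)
We reduce to \Cref{lem:d2-prescribed-qnd}, applied to the lattice $\mathcal L=\wedge^2\Delta<E$ and the polynomial map $g(n)=\wedge^2(a_tn)$ composed with the projections $\mathrm{id}-\pi_c$ and $\mathrm{id}-\pi_r$. Since $\pi_c$ and $\pi_r$ commute with $H$,
$$
hw-\pi_c(hw)=h(w-\pi_cw),\qquad hw-\pi_r(hw)=h(w-\pi_rw).
$$
Hence the condition $hw\in\Xi(s,\varepsilon)$ with $h=a_tn$ says that $\|a_tn w\|\le e^{4(s+1)}$ and that either $\|(a_tn)\pi_r w\|\le\varepsilon$ or $\|(a_tn)\pi_c w\|\le\varepsilon$. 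We treat the two alternatives separately; by symmetry we discuss the $\mathcal M_r$-component, namely the event $\|a_tn\,\pi_r w\|\le\varepsilon$ (i.e.\ $hw$ close to $\mathcal M_c$). The map $n\mapsto\pi_r\circ\wedge^2(a_tn)$ is not invertible on $E$. To stay within the lemma, we instead use $g_t(n)=\wedge^2(a_tn)$ on $E$ itself, with $\Upsilon$ the set of primitive Plücker vectors $w\notin\mathcal H_{\eta,M}$ of $\Delta$-rational planes. We then handle the projection by a deformation: replace the norm by $\|x\|_\lambda:=\|\pi_rx\|+\lambda\|\pi_cx\|$ with $\lambda=\varepsilon e^{-4(s+1)}$. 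Then $hw\in\Xi$ near $\mathcal M_c$ forces $\|hw\|_\lambda\le 2\varepsilon$. The map $g_t^\lambda=(\pi_r+\lambda\pi_c)\circ\wedge^2(a_tn)$ is polynomial of bounded degree in $n$ and invertible, and it is $(C,\tau)$-good uniformly by the standard Kleinbock--Margulis polynomial goodness, since its coordinates are polynomials of degree $\le d_0$ in $(\xi_1,\xi_2)$.

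We apply \Cref{lem:d2-prescribed-qnd} with $\varepsilon$ replaced by $2\varepsilon$ and with $\rho=\varepsilon^{1/(2B_0M)}$. In the first alternative the measure is $\ll(\varepsilon/\rho)^\tau$, which is $\ll\varepsilon^{1/(B_0M)}$ once $B_0$ is large relative to $1/\tau$. The real work is excluding the second alternative: linearly independent $w_1,\dots,w_m\in\Upsilon$, $1\le m\le6$, with
$$
\sup_{n\in B_N(1)}\|\wedge^m g_t^\lambda(n)(w_1\wedge\cdots\wedge w_m)\|<\rho^m .
$$
Let $W=\operatorname{span}(w_1,\ldots,w_m)$, and decompose $\wedge^mE=\bigoplus_{j}\wedge^j\mathcal M_r\otimes\wedge^{m-j}\mathcal M_c$. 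The factor $\lambda$ weights only the $\mathcal M_c$-slots. The components of $w_1\wedge\cdots\wedge w_m$ lying in nontrivial irreducible $H$-submodules are expanded by $a_t$ with rate $e^{\kappa t}$, uniformly over $n\in B_N(1)$ (\Cref{lem:d2-instability}). Since $t\ge B_0\log(1/\varepsilon)$, this rate overwhelms both $\rho^{-m}$ and the powers $\lambda^{-6}\le(\varepsilon^{-1}e^{4(s+1)})^{6}$; here $\varepsilon<e^{-B_0Ms}$ lets us absorb $e^{s}$ into a power of $1/\varepsilon$. So these components are tiny in the original (integral) coordinates.

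The main obstacle is converting this into arithmetic information. First, the $H$-invariant components of $\wedge^mE$ built from $\mathcal M_r$ alone exist only for $m=3$ (namely $\wedge^3\mathcal M_r$), and those from $\mathcal M_c$ alone likewise. Mixed invariants are combinations of these with the trivial pieces of $\wedge^2\mathcal M_r\otimes\mathcal M_r$-type contractions. The upshot should be that $W$ contains, up to error a negative power of $1/\varepsilon$ relative to its height, a subspace that is $\mathcal M_c$ itself. Concretely, this is the case $m=3$ with $\operatorname{dist}(W,\mathcal M_c)\ll\varepsilon^{c'}$ and $\|\mathsf w_{\wedge^2\Delta,W}\|\ll\varepsilon^{-c''/B_0}$; for $m\neq3$ the smallness contradicts discreteness of $\wedge^m\wedge^2\Delta$, or reduces to $m=3$ by passing to a sub-wedge. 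The subspace $W$ contains $w_1=u\wedge v$, a decomposable vector from $\Delta$. Now \Cref{lem:rational-ruling-reconstruction-d2} yields a rational split form $Q_W$ with $\operatorname{ht}(Q_W)\ll\varepsilon^{-3c''/B_0}$ and $\operatorname{dist}([Q_{\Delta,\mathcal B}],[Q_W])\ll\varepsilon^{c'}$. For $B_0$ large compared with $M_0$, this violates the $(\eta_0,M_0)$-split-Diophantine condition, unless $Q_W$ is exactly proportional to $Q_{\Delta,\mathcal B}$, which would make $\Delta$ determinant-rational. The alternative where $W$ is near $\mathcal M_r$ is symmetric. I expect the invariant-theoretic step (identifying which $W$ can have all non-invariant components small, and extracting the quantitative closeness of $W$ to $\mathcal M_c$) to be the delicate part. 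The fallback is the case analysis in \cite[Proposition~7.3]{KimOh_det}, specialized to $d=2$.
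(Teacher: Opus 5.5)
Your architecture matches the paper's: your $\pi_r+\lambda\pi_c$ is the paper's $D$ up to a factor $2$, followed by \Cref{lem:d2-prescribed-qnd}, instability, and \Cref{lem:rational-ruling-reconstruction-d2}. There is, however, a genuine gap in the parameter choice $\rho=\varepsilon^{1/(2B_0M)}$. With $\rho$ this close to $1$, the obstruction alternative cannot be excluded, because it actually occurs.

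The component of $\mathbf w$ in $\wedge^j\mathcal M_c\otimes\wedge^{m-j}\mathcal M_r$ is only controlled up to $\lambda^{-j}\rho^m$. The $H$-invariants of $\wedge^mE$ are exactly $\wedge^3\mathcal M_c$ and $\wedge^3\mathcal M_r$ (for $m=3$), and the line $\wedge^6E=\wedge^3\mathcal M_c\otimes\wedge^3\mathcal M_r$ (for $m=6$). On that line $\wedge^6g_t^\lambda(n)$ acts by $\lambda^3$, so the $m=6$ obstruction reads $\lambda^3\|\mathbf w\|<\rho^6$. Since $\rho^6\lambda^{-3}=\varepsilon^{-3+3/(B_0M)}e^{12(s+1)}\to\infty$, any fixed six independent elements of $\Upsilon$ satisfy it once $\varepsilon$ is small. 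So your claim that $m\ne3$ contradicts discreteness is false with your $\rho$.

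Even for $m=3$ your parameters do not close the argument. The $\wedge^3\mathcal M_c$-component is bounded only by $\lambda^{-3}\rho^3\approx\varepsilon^{-3}e^{12(s+1)}$, and the $\wedge^3\mathcal M_r$-component only by $\rho^3=\varepsilon^{3/(2B_0M)}$. Reconstruction then gives $\operatorname{ht}(Q_W)$ up to $\varepsilon^{-9}e^{36(s+1)}$ against $\operatorname{dist}\ll\varepsilon^{3/(2B_0M)}$. That pair is perfectly compatible with $\operatorname{dist}\ge\eta_0\operatorname{ht}^{-M_0}$. The bound $\|\mathsf w_{\wedge^2\Delta,W}\|\ll\varepsilon^{-c''/B_0}$ you assert does not follow from your choice.

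What is needed is that $\rho^m\lambda^{-j}$ be small for every $j<m$, while $\rho^m\lambda^{-m}$ is at most a small negative power of $\varepsilon$. This forces $\rho\approx\lambda$, and it is the paper's choice $\rho=\varepsilon^{1-\delta}$ with $\delta<\min\{1/100,1/(40M_0)\}$.

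With that choice, using $e^{s}\le\varepsilon^{-1/(B_0M)}$, every fixed component except the pure $\wedge^m\mathcal M_c$ one is $O(\varepsilon^{1/2})$. Since $\mathcal M_c$ and $\wedge^2\mathcal M_c$ have no invariants and $\wedge^m\mathcal M_c=0$ for $m>3$, discreteness forces $m=3$. One gets $\operatorname{dist}(W,\mathcal M_c)\ll\varepsilon^{1/2}$ and $\|\mathsf w_{\wedge^2\Delta,W}\|\ll\varepsilon^{-6\delta}$. \Cref{lem:rational-ruling-reconstruction-d2} then gives height $\ll\varepsilon^{-18\delta}$ against distance $\ll\varepsilon^{1/2}$, which contradicts the split-Diophantine bound because $18M_0\delta<1/2$.

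The price is that the first alternative yields only $(\varepsilon/\rho)^\tau=\varepsilon^{\tau\delta}$. The target exponent $1/(B_0M)$ is then reached by enlarging $B_0$, which is legitimate because $B_0$ may depend on $\Delta$, hence on $M_0$. So $\rho$ must be tuned to the Diophantine exponent, not to the exponent you want in the measure bound.
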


\begin{proof}
We treat the condition
$
\|a_tnw-\pi_c(a_tnw)\|\le\varepsilon;
$
the argument for $\mathcal M_r$ is identical. Let $\Upsilon$ be the
collection of primitive Pl\"ucker vectors of $\Delta$-rational two-planes
that do not belong to $\mathcal H_{\eta,M}$. Define $D\in\GL(E)$ by
$$
D|_{\mathcal M_c}
=
\frac{\varepsilon}{2e^{4(s+1)}}\operatorname{id},
\qquad
D|_{\mathcal M_r}
=
\frac12\operatorname{id}.
$$
If $a_tnw\in\Xi(s,\varepsilon)$ and
$\|a_tnw-\pi_c(a_tnw)\|\le\varepsilon$, then
$\|Da_tnw\|\le\varepsilon$.

For every exterior degree, the coordinates of
$\wedge^r(Da_tn)$ are polynomials of degree bounded independently of
$s,t,\varepsilon$. The vector-valued Remez inequality supplies the
goodness constants required in
\Cref{lem:d2-prescribed-qnd}.

Choose
$$
0<\delta<
\min\left\{
\frac1{100},\frac1{40M_0}
\right\},
\qquad
\rho=\varepsilon^{1-\delta}.
$$
By \Cref{lem:d2-prescribed-qnd}, either the measure estimate in
\eqref{eq:d2-avoidance0} follows, after enlarging $B_0$, or there are
linearly independent vectors $w_1,\ldots,w_m\in\Upsilon$ such that, with
$
\mathbf w=w_1\wedge\cdots\wedge w_m,
$
one has
\begin{equation}\label{eq:d2-qnd-obstruction}
\sup_{n\in B_N(1)}
\|(\wedge^m(Da_tn))\mathbf w\|
<
\rho^m.
\end{equation}

Decompose
$$
\mathbf w
=
\sum_{j=0}^m\mathbf w_j,
\qquad
\mathbf w_j
\in
\wedge^j\mathcal M_c
\otimes
\wedge^{m-j}\mathcal M_r.
$$
In each of these finitely many $H$-modules, write
$\mathbf w_j$ as the sum of its $H$-fixed part
$\mathbf w_j^{\operatorname{fix}}$ and its projections
$\mathbf w_j^{\operatorname{nt}}$ onto the nontrivial irreducible summands.
Let $\kappa>0$ be the minimum of the constants supplied by
\Cref{lem:d2-instability} over these nontrivial summands.

Projecting \eqref{eq:d2-qnd-obstruction} and undoing the action of $D$
gives
$$
\|\mathbf w_j^{\operatorname{fix}}\|
\ll
e^{4j(s+1)}\varepsilon^{m(1-\delta)-j},
\quad\text{and}\quad
\|\mathbf w_j^{\operatorname{nt}}\|
\ll
e^{4j(s+1)}\varepsilon^{m(1-\delta)-j}e^{-\kappa t}.
$$
Since $\varepsilon<e^{-B_0Ms}$ and
$t\ge B_0\log(1/\varepsilon)$,
$$
e^{4j(s+1)}
\ll
\varepsilon^{-4j/(B_0M)},
\qquad
e^{-\kappa t}
\le
\varepsilon^{\kappa B_0}.
$$
After enlarging $B_0$, we may therefore arrange that
$$
m(1-\delta)-j-\tfrac{4j}{B_0M}
>
\tfrac12 \quad (0\le j <m) ;$$
$$ m(1-\delta)-j+\kappa B_0-\tfrac{4j}{B_0M}
>
\tfrac12
\quad 
(0\le j\le m).
$$
It follows that all terms with $j<m$, as well as all nontrivial terms with
$j=m$, have norm $O(\varepsilon^{1/2})$. For the fixed part with $j=m$,
enlarging $B_0$ once more gives
$$
\|\mathbf w_m^{\operatorname{fix}}\|
\ll
e^{4m(s+1)}\varepsilon^{-m\delta}
\ll
\varepsilon^{-2m\delta}.
$$
Consequently,
\begin{equation}\label{eq:d2-wedge-ruling-close}
\|\mathbf w-(\wedge^m\pi_c)\mathbf w\|
\ll
\varepsilon^{1/2},
\qquad
\|\mathbf w\|
\ll
\varepsilon^{-2m\delta}.
\end{equation}

The modules $\mathcal M_c$ and $\wedge^2\mathcal M_c$ have no nonzero
$H$-fixed vectors, whereas $\wedge^3\mathcal M_c$ is one-dimensional and
fixed. Since every nonzero vector in each of the finitely many lattices
$\wedge^m(\wedge^2\Delta)$ has norm bounded below,
\eqref{eq:d2-wedge-ruling-close} is impossible when $m>3$, because
$\wedge^m\pi_c=0$. It is also impossible when $m=1$ or $m=2$, because in
those cases the pure $\mathcal M_c$-component has no fixed part and is
therefore also $O(\varepsilon^{1/2})$. Hence $m=3$.

Let
$
W=\operatorname{span}\{w_1,w_2,w_3\}.
$
Since $\mathbf w$ is a nonzero integral multiple of
$\mathsf w_{\wedge^2\Delta,W}$,
\eqref{eq:d2-wedge-ruling-close} gives
$$
\|\mathsf w_{\wedge^2\Delta,W}\|
\ll
\varepsilon^{-6\delta}.
$$
Moreover, $(\wedge^3\pi_c)\mathbf w\ne0$ for sufficiently small
$\varepsilon$, and normalization in
\eqref{eq:d2-wedge-ruling-close} gives
$$
\operatorname{dist}(W,\mathcal M_c)
\ll_\Delta
\varepsilon^{1/2}.
$$

The vector $w_1$ is the wedge of a $\mathbb Z$-basis of a rank-two
sublattice of $\Delta$. Hence, for sufficiently small $\varepsilon$,
\Cref{lem:rational-ruling-reconstruction-d2} gives a rational split form
$Q_W$ such that
$$
\operatorname{ht}(Q_W)
\ll
\varepsilon^{-18\delta},
\qquad
\operatorname{dist}([Q_{\Delta,\mathcal B}],[Q_W])
\ll
\varepsilon^{1/2}.
$$
On the other hand, the split-Diophantine condition gives
$$
\operatorname{dist}([Q_{\Delta,\mathcal B}],[Q_W])
\ge
\eta_0\operatorname{ht}(Q_W)^{-M_0}
\gg
\varepsilon^{18M_0\delta}.
$$
These two estimates are incompatible because
$18M_0\delta<1/2$.

Choose $\varepsilon_0>0$ so that this contradiction holds whenever
$0<\varepsilon<\varepsilon_0$. For
$\varepsilon_0\le\varepsilon<1$, the desired estimate follows after
increasing the implicit constant, since $\Leb(B_N(1))<\infty$. Thus the
second alternative in \Cref{lem:d2-prescribed-qnd} cannot occur. The first
alternative gives a bound of order $\varepsilon^{\tau\delta}$; enlarging
$B_0$ proves \eqref{eq:d2-avoidance0}.
\end{proof}

We now incorporate the ordinary height into the avoidance estimate. This is
the form needed in the stopping-time argument of the next section.

\begin{prop}[Avoidance estimate]
\label{prop:avoidance'}
There exists $B=B(\Delta,\eta,M)>1$ such that for all
$M'\ge BM^2, s\ge0, t\ge BM's$,
one has
\begin{equation}\label{eq:d2-avoidance}
\int_{B_N(1)}
\alpha(a_tn\Delta)^2
\mathbf 1_{\mathcal E_{s,\eta,M,M'}}(a_tn,\Delta)\,dn
\ll
e^{-M's/(BM)}+e^{-t/(BM)}.
\end{equation}
\end{prop}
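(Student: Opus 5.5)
The plan is to split the integral according to the size of $\alpha(a_tn\Delta)$ and to treat separately the two regimes in the definition \eqref{eq:d2-exceptional-scale} of $\varepsilon_{s,M'}$. I fix a threshold $L=e^{5(s+1)}$.

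\textbf{The range $\alpha(a_tn\Delta)\le L$.} Here $\varepsilon_{s,M'}(a_tn;\Delta)=e^{-M's}$, so $(a_tn,\Delta)\in\mathcal E_{s,\eta,M,M'}$ implies $(a_tn,\Delta)\in\mathcal K_{\eta,M}(s,e^{-M's})$. The integrand is at most $e^{10(s+1)}$ times the indicator of this set. I apply \Cref{prop:avoidance0} with $\varepsilon=e^{-M's}$. Its hypotheses read $M's>B_0Ms$, which follows from $M'\ge BM^2$ once $B\ge B_0$, and $t\ge B_0M's$, which follows from $t\ge BM's$. This bounds the contribution by $e^{10(s+1)}e^{-M's/(B_0M)}$. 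Since $M'\ge BM^2$ is large compared with $B_0M$, the factor $e^{10s}$ is absorbed, leaving $e^{-M's/(BM)}$. The degenerate case $s$ small (say $s<1$) needs a separate check, because \Cref{prop:avoidance0} requires $\varepsilon<1$ strictly. In that case I would instead bound the contribution by $\int\alpha^2\mathbf 1_{\{\alpha\le L\}}$ restricted to the exceptional set. Since $e^{-M's}$ is then only of order one, the bound $e^{-M's/(BM)}\asymp 1$ is trivially acceptable after enlarging the implicit constant.

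\textbf{The range $\alpha(a_tn\Delta)>L$.} Here $\varepsilon_{s,M'}=\alpha^{-M'}$. I decompose dyadically, $\alpha(a_tn\Delta)\in(e^{k},e^{k+1}]$ with $e^k\ge L$. On each shell, exceptionality gives $(a_tn,\Delta)\in\mathcal K_{\eta,M}(s,e^{-kM'})$, so the shell contributes at most $e^{2k+2}\Leb\{n:(a_tn,\Delta)\in\mathcal K_{\eta,M}(s,e^{-kM'})\}$.

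\begin{itemize}
\item \emph{Moderate $k$, namely $kM'B_0\le t$.} \Cref{prop:avoidance0} applies with $\varepsilon=e^{-kM'}$: the requirement $e^{-kM'}<e^{-B_0Ms}$ holds since $k\ge5s$ and $M'\ge B_0M$. This gives the bound $e^{2k}e^{-kM'/(B_0M)}$. Because $M'/(B_0M)\ge BM/B_0\gg 2$, the series over $k\ge 5s$ is dominated by its first term, of order $e^{-M's/(BM)}$.
\item \emph{Large $k$, namely $e^{k}>e^{t/(B_0M')}$.} Avoidance is no longer available. Instead I drop the exceptional indicator and use a tail bound for $\alpha^2$ on $\{\alpha>A\}$ with $A=e^{t/(B_0M')}$. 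I would get this from \Cref{lem:d2-noncritical-height-contribution} and the global estimate \Cref{prop:globalcontraction}, or more simply from the standard Margulis function estimate for $\alpha^{\beta}$ at the identity. However, $\alpha^{2}$ is beyond the range where those work.
\end{itemize}

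This large-$k$ range is the main obstacle. I expect the paper's resolution is that the exceptional set already forces $\alpha\ll e^{Ct}$ trivially, since $\alpha(a_tn\Delta)\le e^{6t}\alpha(\Delta)$ by \Cref{lem:d2-log-lipschitz}. That leaves only finitely many shells, $k\le 6t+O(1)$. The condition $t\ge BM's$ does not make these shells avoidance-accessible, so I would instead shrink $\varepsilon$. The idea is to note that being in $\mathcal K(s,e^{-kM'})$ also places the pair in $\mathcal K(s,\varepsilon)$ with $\varepsilon=e^{-t/B_0}$, because $e^{-kM'}\le e^{-t/B_0}$ for large $k$. \Cref{prop:avoidance0} with this $\varepsilon$ gives measure $\ll e^{-t/(B_0^2M)}$. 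Against the weight $e^{2k}\le e^{12t}$ this is still too weak, so I would balance using the finer $\varepsilon=e^{-kM'}$ whenever $kM'\le t/B_0$. For the remaining shells, I would use $M'\ge BM^2$ so that $e^{-kM'}$ is tiny, and replace \Cref{prop:avoidance0} by its proof with the threshold $t\ge B_0\log(1/\varepsilon)$ weakened to $t\ge B_0M\log(1/\varepsilon)/M'$. The point is that the instability step in that proof needs only $e^{-\kappa t}$ small compared with a fixed power of $\varepsilon$. This yields the term $e^{-t/(BM)}$.

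Summing the two ranges and enlarging $B$ gives \eqref{eq:d2-avoidance}.
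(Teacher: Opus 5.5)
\textbf{The small-$\alpha$ range and the moderate shells.} Your treatment of the range $\alpha(a_tn\Delta)\le e^{5(s+1)}$ and of the moderate shells $B_0kM'\le t$ is the paper's treatment of $E_0$ and of the ``low'' indices.

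One small correction here. For every $s>0$ you have $e^{-M's}<1$, so \Cref{prop:avoidance0} applies directly, and only $s=0$ needs the trivial bound. Your trivial bound for $0<s<1$ does not give $e^{-M's/(BM)}$ uniformly in $M'$, because $M'$ is unbounded above.

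\textbf{The gap: the large-$\alpha$ shells.} The bound $\alpha(a_tn\Delta)\le e^{6(t+1)}\alpha(\Delta)$ is just the log-Lipschitz estimate and has nothing to do with exceptionality. With a weight as large as $e^{12t}$, no avoidance bound of size $e^{-t/(B_0^2M)}$ can win, as you noticed.

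Your remedy is to rerun the proof of \Cref{prop:avoidance0} under the weaker threshold $t\ge B_0M\log(1/\varepsilon)/M'$. This does not follow from that proof. There, the instability step must give $\|\mathbf w_m^{\operatorname{nt}}\|\ll\varepsilon^{1/2}$ despite the prefactor $e^{4m(s+1)}\varepsilon^{-m\delta}$. That requires $e^{-\kappa t}\le\varepsilon^{c_0}$ for a fixed $c_0>1/2$, that is, $t\ge(c_0/\kappa)\log(1/\varepsilon)$ with a constant independent of $M'$. Dividing the threshold by $M'$ makes $e^{-\kappa t}$ far larger than any fixed power of $\varepsilon$, and then the obstruction alternative in \Cref{lem:d2-prescribed-qnd} can no longer be excluded. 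Even granting the weakened threshold, it would only cover $k\le t/(B_0M)$, so the shells with $t/(B_0M)<k\le 6t+O(1)$ would remain.

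\textbf{The missing idea.} Exceptionality itself caps the height at a \emph{small} exponential in $t$, because the witnessing vector is not quasi-null.

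In the second case of \eqref{eq:d2-exceptional-scale}, let $w\notin\mathcal H_{\eta,M}$ witness $(a_tn,\Delta)\in\mathcal E_{s,\eta,M,M'}$. Since $\pi_c$ and $\pi_r$ commute with $H$, \eqref{eq:d2-log-lipschitz-wedge} gives
\[
\|w\|\le e^{4(t+1)}e^{4(s+1)},\qquad
\min\{\|w-\pi_cw\|,\|w-\pi_rw\|\}\le e^{4(t+1)}\alpha(a_tn\Delta)^{-M'} .
\]
On the other hand, $w\notin\mathcal H_{\eta,M}$ forces this minimum to exceed $\eta\|w\|^{-M}\ge\eta e^{-4M(t+s+2)}$. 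Hence
\[
\alpha(a_tn\Delta)^{M'}<\eta^{-1}e^{4(M+1)(t+1)+4M(s+1)} .
\]
Since $M'\ge BM^2$ and $s\le t/(BM')$, this gives $\alpha(a_tn\Delta)\le C_1e^{t/(B_1M)}$ for any prescribed $B_1$, after enlarging $B$. Equivalently, $\mathcal K_{\eta,M}(s,\varepsilon)$ is never met along $a_tn$ once $\log(1/\varepsilon)>\log(1/\eta)+4(M+1)(t+1)+4M(s+1)$.

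You were right that the resolution is a cap forced by exceptionality. But the cap has to be $e^{t/(B_1M)}$, not $e^{6t}$, and it comes from non-quasi-nullity, not from log-Lipschitz.

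\textbf{How the cap closes the argument.} With this cap, the non-moderate shells carry weight at most $e^{20(s+1)}e^{2t/(B_1M)}$. The coarse bound you already wrote down is \Cref{prop:avoidance0} with $\varepsilon_*=e^{-t/(2B_0)}$, which gives measure $\ll e^{-t/(2B_0^2M)}$. This beats the weight once $B_1>8B_0^2$. The $O(1+s+t)$ shells and the factor $e^{20(s+1)}$ are absorbed using $t\ge BM's$. This is how the paper obtains the $e^{-t/(BM)}$ term.
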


\begin{proof}
Let $B_0$ be the constant in \Cref{prop:avoidance0}. Fix $B_1>8B_0^2$. After enlarging $B$, there is $C_1>1$ such that
\begin{equation}\label{eq:d2-exceptional-alpha-cap}
(a_tn,\Delta)\in\mathcal E_{s,\eta,M,M'}\quad\Rightarrow\quad 
\alpha(a_tn\Delta)\le C_1e^{t/(B_1M)}.
\end{equation}
In the first case of \eqref{eq:d2-exceptional-scale}, this follows from $\alpha(a_tn\Delta)\le e^{5(s+1)}$ and $t\ge BM's$. In the second case, a vector $w\notin\mathcal H_{\eta,M}$ witnessing exceptionality satisfies
$$
\|w\|\ll e^{4(t+1)} e^{4(s+1)},
\qquad
\min\{\|w-\pi_cw\|,\|w-\pi_rw\|\}\ll e^{4(t+1)}\alpha(a_tn\Delta)^{-M'}.
$$
If \eqref{eq:d2-exceptional-alpha-cap} failed, then, after increasing $B$ and $C_1$,
$$
 e^{4(t+1)} \alpha(a_tn\Delta)^{-M'}\le\eta(e^{4(t+1)} e^{4(s+1)})^{-M}\le\eta\|w\|^{-M},
$$
because $M'\ge BM^2$ and $t\ge BM's$. This contradicts $w\notin\mathcal H_{\eta,M}$.

Put $\mathsf h_j=e^{10(s+1)}e^{10j}$, and let $J$ be the least integer such that $\mathsf h_J\ge C_1e^{t/(B_1M)}$. For an exceptional point, choose the least $j$ such that $\alpha(a_tn\Delta)\le\mathsf h_j$. If $j=0$, enlarge the exceptional scale to $e^{-M's}$. If $j\ge1$, then $\alpha(a_tn\Delta)>\mathsf h_{j-1}>e^{5(s+1)}$, so the second line of \eqref{eq:d2-exceptional-scale} applies and $\alpha(a_tn\Delta)^{-M'}<\mathsf h_{j-1}^{-M'}$. Hence every exceptional point belongs to one of the sets
$$
E_0:=\{n\in B_N(1):\alpha(a_tn\Delta)\le\mathsf h_0,\ (a_tn,\Delta)\in\mathcal K_{\eta,M}(s,e^{-M's})\},
$$
$$
E_j:=\left\{n\in B_N(1):
\alpha(a_tn\Delta)\le\mathsf h_j ,\;\;
(a_tn,\Delta)\in\mathcal K_{\eta,M}(s,\mathsf h_{j-1}^{-M'}) \right\},
\qquad1\le j\le J.
$$
Consequently,
\begin{equation}\label{eq:d2-level-sum}
\int_{B_N(1)}\alpha(a_tn\Delta)^2\mathbf1_{\mathcal E_{s,\eta,M,M'}}(a_tn,\Delta)\,dn
\ll\mathsf h_0^2\Leb(E_0)+\sum_{j=1}^J\mathsf h_j^2\Leb(E_j).
\end{equation}

If $s=0$, the first term is bounded by a constant. If $s>0$, the hypotheses on $M'$ and $t$ imply that \Cref{prop:avoidance0} applies with $\varepsilon=e^{-M's}$, and hence
$$
\Leb(E_0)\ll e^{-M's/(B_0M)}.
$$
Call $j\ge1$ low if $B_0M'\log\mathsf h_{j-1}\le t/2$. For such $j$, one also has $\mathsf h_{j-1}^{-M'}<e^{-B_0Ms}$, so \Cref{prop:avoidance0} gives $\Leb(E_j)\ll\mathsf h_{j-1}^{-M'/(B_0M)}$, hence
$$
\mathsf h_j^2\Leb(E_j)\ll\mathsf h_{j-1}^{2-M'/(B_0M)}.
$$
Since $\mathsf h_j/\mathsf h_{j-1}=e^{10}$ and $M'\ge BM^2$, the contribution of $E_0$ and the low indices to \eqref{eq:d2-level-sum} is $\ll e^{-M's/(BM)}$, after enlarging $B$.

If $t$ is bounded, \eqref{eq:d2-exceptional-alpha-cap} proves the required estimate after increasing the implicit constant. We may therefore assume that $t$ is sufficiently large. For the non-low indices, put $\varepsilon_*=e^{-t/(2B_0)}$. Then $\mathsf h_{j-1}^{-M'}<\varepsilon_*$. The assumptions on $M'$, $s$, and $t$ also give $\varepsilon_*<e^{-B_0Ms}$, while $t\ge B_0\log(1/\varepsilon_*)$. Thus \Cref{prop:avoidance0} gives
$$
\Leb(E_j)\ll e^{-t/(2B_0^2M)}.
$$
By the minimality of $J$, we have $\mathsf h_J^2\ll e^{20(s+1)}e^{2t/(B_1M)}$ and $J+1\ll1+s+t$. Therefore the contribution of the non-low indices is at most
$$
e^{20(s+1)}(J+1)\exp\left[-\left(\frac1{2B_0^2}-\frac2{B_1}\right)\frac tM\right].
$$
Put $c=1/(2B_0^2)-2/B_1>0$. After enlarging $B$, the assumptions $M'\ge BM^2$ and $t\ge BM's$ imply $20(s+1)+\log(J+1)\le\frac{ct}{2M}$ whenever the non-low range is nonempty. Hence its contribution is $\ll e^{-ct/(2M)}\ll e^{-t/(BM)}$. This proves \eqref{eq:d2-avoidance}.
\end{proof}
\section{Uniform moment bounds for the modified height}
\label{s:iterations}

We now combine the global contraction estimate with the avoidance estimate
through a stopping-time iteration, following
\cite[Section~6]{Kim}. We first prove a uniform moment bound for averages
over the expanding horospherical subgroup $N$. A local coordinate argument
then transfers this bound to the $K$-averages needed in the counting
problem.

Choose $R_0>1$ large enough that the unipotent parameters in the finitely
many local coordinate charts used in
\cite[Subsection~7.2]{Kim} belong to $B_N(R_0)$. For every fixed $R>0$,
the estimates in \Cref{prop:globalcontraction,prop:avoidance'} and the
ordinary-height estimate below remain valid with $B_N(1)$ replaced by
$B_N(R)$, after changing the constants. We use the contraction and
ordinary-height estimates on $B_N(2R_0)$ and the avoidance estimate on
$B_N(3R_0)$. Let $\nu$ be the probability measure obtained by restricting
$dn$ to $B_N(2R_0)$ and normalizing.

The following comparison allows us to replace a single long
horospherical average by a product of shorter averages.

\begin{lem}[Random walk comparison]
\label{lem:randomwalkgeneral}
There is $T_0>0$ such that, if $s_1,\ldots,s_m\ge T_0$, then, for every
nonnegative measurable function $F:H\times X\to[0,\infty)$ and every
$\Delta\in X$,
\begin{multline*}
\int_{B_N(R_0)}
F(a_{s_1+\cdots+s_m}n;\Delta)\,dn\\
\ll
\int_{B_N(2R_0)^m}
F(a_{s_m}n_m\cdots a_{s_1}n_1;\Delta)\,
d\nu(n_1)\cdots d\nu(n_m),
\end{multline*}
and
\begin{multline*}
\int_{B_N(2R_0)^m}
F(a_{s_m}n_m\cdots a_{s_1}n_1;\Delta)\,
d\nu(n_1)\cdots d\nu(n_m)\\
\ll
\int_{B_N(3R_0)}
F(a_{s_1+\cdots+s_m}n;\Delta)\,dn.
\end{multline*}
\end{lem}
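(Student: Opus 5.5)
The plan is to reduce the comparison to a change of variables in the group $N\rtimes\langle a_t\rangle$. The key identity is $a_sna_s^{-1}=n_{e^{2s}\xi_1,e^{2s}\xi_2}$ for $n=n_{\xi_1,\xi_2}$. This follows from $b_su_\xi b_s^{-1}=u_{e^{2s}\xi}$, since conjugation by $b_t$ multiplies the lower-left entry by $e^{2t}$. Using it repeatedly, we rewrite
$$
a_{s_m}n_m\cdots a_{s_1}n_1
=
a_{s_1+\cdots+s_m}\,n(\xi),
\qquad
\xi=\sum_{j=1}^m e^{-2(s_1+\cdots+s_{j-1})}\xi^{(j)},
$$
where $\xi^{(j)}\in\mathbb R^2$ is the coordinate of $n_j$. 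We move each $a_{s_j}$ to the left past the later unipotents, and this rescales each later unipotent by $e^{-2(\cdot)}$. The function $F(a_{s_1+\cdots+s_m}n;\Delta)$ depends only on $n$. So the right-hand side of the first inequality is $\int F(a_{S}n(\xi);\Delta)\,d\mu(\xi)$, with $S=\sum s_j$ and $\mu$ the law of the random sum $\xi$ under $\nu^{\otimes m}$.

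The whole statement then reduces to comparing $\mu$ with Lebesgue measure. The claim is that $\mu$ has a density bounded above on $\mathbb R^2$, bounded below on $B_N(R_0)$, and supported in $B_N(3R_0)$. Support: $|\xi|_\infty\le 2R_0\sum_{j\ge0}e^{-2jT_0}\le 2R_0/(1-e^{-2T_0})\le 3R_0$ once $T_0$ is large.

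Upper density bound: $\mu$ is the convolution of the uniform law on $B_N(2R_0)$, which comes from $\xi^{(1)}$ with coefficient $1$, with the law of the remaining terms. A convolution of a density bounded by $\Leb(B_N(2R_0))^{-1}$ with any probability measure has density bounded by the same constant. This gives the second inequality, since $\mu\ll \mathbf 1_{B_N(3R_0)}\,d\xi$.

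Lower density bound: the tail $\zeta=\sum_{j\ge2}(\cdots)$ satisfies $|\zeta|_\infty\le 2R_0e^{-2T_0}/(1-e^{-2T_0})\le R_0/2$ for $T_0$ large. So for every $x\in B_N(R_0)$ and every value of $\zeta$, the point $x-\zeta$ lies in $B_N(3R_0/2)\subset B_N(2R_0)$. The density of $\mu$ at $x$ is $\mathbb E[\text{density of }\xi^{(1)}\text{ at }x-\zeta]=\Leb(B_N(2R_0))^{-1}$. This yields $\mathbf 1_{B_N(R_0)}\,d\xi\ll\mu$, which is the first inequality.

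The main point requiring care is the order convention in the product: which factor receives coefficient $1$, and that the rescalings are $e^{-2(\cdot)}$ rather than $e^{+2(\cdot)}$. I would verify this directly from $a_{s}n_{\xi}=n_{e^{2s}\xi}a_s$, after which everything is elementary. All implied constants depend only on $R_0$.
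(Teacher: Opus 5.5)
Your proposal is correct and follows the paper's proof. You use the same commutation identity $a_{s_m}n_m\cdots a_{s_1}n_1=a_{s_1+\cdots+s_m}n_{\xi}$ with $\xi=\sum_j e^{-2(s_1+\cdots+s_{j-1})}\xi^{(j)}$, and the same observation that the law of $\xi$ has density bounded above, supported in $B_N(3R_0)$, and bounded below on $B_N(R_0)$ once $T_0$ is large; you simply write out the geometric tail bound and the convolution-with-a-uniform-density argument that the paper leaves implicit.
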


\begin{proof}
This is the product-group version of \cite[Lemma~6.1]{Kim}. Writing
$n_j=n_{\xi_j}$ with $\xi_j\in\mathbb R^2$, we have
$$
a_{s_m}n_m\cdots a_{s_1}n_1
=
a_{s_1+\cdots+s_m}
n_{\xi_1+e^{-2s_1}\xi_2+\cdots
+e^{-2(s_1+\cdots+s_{m-1})}\xi_m}.
$$
For sufficiently large $T_0$, the resulting convolution density is
uniformly bounded, is supported in $B_N(3R_0)$, and is bounded below on
$B_N(R_0)$. 
\end{proof}

We shall choose the lengths of the successive steps using the following
elementary decomposition (see \cite[Lemma~6.2]{Kim}).

\begin{lem}[Step decomposition]
\label{lem:ndecomposition}
Let $D>1$, $0<\delta<(1+D)^{-1}$, and $T>0$. For every
$t\ge T/\delta$, there are $I\ge2$ and $s_1,\ldots,s_I\ge T$ such that
$$
\begin{gathered}
t=s_1+\cdots+s_I,\qquad s_1=Ds_2,\\
s_i=(1+\delta)s_{i+1}\quad(2\le i<I),
\qquad
T\le s_I\le2T.
\end{gathered}
$$
\end{lem}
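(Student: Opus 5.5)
The plan is to build the sequence backwards from the last step, since all the constraints are multiplicative. Parametrize by the last step $s_I=\sigma\in[T,2T]$. Then for $2\le i\le I$,
\[
s_i=(1+\delta)^{I-i}\sigma, \qquad s_1=Ds_2=D(1+\delta)^{I-2}\sigma .
\]
Summing the geometric series gives the total length
\[
S_I(\sigma)=\sigma\Bigl(D(1+\delta)^{I-2}+\tfrac{(1+\delta)^{I-1}-1}{\delta}\Bigr)=:\sigma\,c_I .
\]
Every $s_i$ is automatically at least $\sigma\ge T$, because $D>1$ and $1+\delta>1$. So the whole task is to choose $I\ge 2$ and $\sigma\in[T,2T]$ with $\sigma c_I=t$.

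As $\sigma$ ranges over $[T,2T]$, the total $S_I$ covers the interval $[c_IT,2c_IT]$. Consecutive intervals overlap or abut as long as $c_{I+1}\le 2c_I$; then their union over $I\ge 2$ is $[c_2T,\infty)$, since $c_I\to\infty$. To check the ratio, note
\[
c_{I+1}=(1+\delta)c_I+1 .
\]
This follows from $c_{I+1}-(1+\delta)c_I=\bigl((1+\delta)^I-1-(1+\delta)^I+(1+\delta)\bigr)/\delta=1$. Hence $c_{I+1}\le 2c_I$ exactly when $1\le(1-\delta)c_I$. Since $c_I\ge c_2=D+1$, it suffices that $(1-\delta)(D+1)\ge1$. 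This holds because $\delta<(1+D)^{-1}<1$ and $D>1$ give $(1-\delta)(D+1)>D>1$.

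Finally, $t\ge T/\delta>(1+D)T=c_2T$, so $t$ lies in the covered range $[c_2T,\infty)$. Pick the smallest $I\ge2$ with $t\le 2c_IT$ and set $\sigma=t/c_I$. Minimality of $I$, together with the overlap property, gives $\sigma\in[T,2T]$.

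No step here is a genuine obstacle. The only point needing care is the gap-free covering, which reduces to the inequality $c_{I+1}\le 2c_I$ checked above. The hypothesis $t\ge T/\delta$ is stronger than needed; it is presumably there for use later rather than for the existence of the decomposition. The condition $I\ge2$ is built into the construction.
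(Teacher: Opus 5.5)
Your proof is correct. The backward parametrization $s_i=(1+\delta)^{I-i}\sigma$, the recursion $c_{I+1}=(1+\delta)c_I+1$ giving $c_{I+1}\le 2c_I$ from $(1-\delta)(D+1)>1$, and the choice of the minimal $I$ with $t\le 2c_IT$ (which forces $\sigma\in[T,2T]$) all check out. The paper gives no argument of its own here and simply defers to \cite[Lemma~6.2]{Kim}; your geometric-series construction is the natural elementary proof of that cited statement and is complete as written.
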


We also use a bounded-expansion estimate for the ordinary height. There are
constants $C>0$ and $\gamma_0>0$, depending on $R_0$, such that, for
$0<\gamma\le\gamma_0$ and $s\geq 1$,
\begin{equation}\label{eq:ordinary-height-small-moment}
\int_{B_N(2R_0)}
\alpha(a_snh\Delta)^{1+\gamma}\,d\nu(n)
\le
e^{C\gamma s}\alpha(h\Delta)^{1+\gamma}
+
e^{Cs}.
\end{equation}
This follows from the fixed-subspace estimates and the Pl\"ucker covolume
inequality; see \cite[Section~5]{eskin-margulis-mozes:1998}.

\begin{prop}[Uniform $N$-moment bound]
\label{prop:maintheoremforu}
Suppose that $\Lambda\in X$ is Diophantine and not
determinant-rational. Choose $0<\eta<1$ and $M>1$ such that \eqref{fs}
holds. Then there is $\theta'>0$ such that
\begin{equation}\label{eq:d2-N-average-main}
\sup_{t\ge0}
\int_{B_N(R_0)}
\widehat\alpha_{\eta,M}(a_tn;\Lambda)^{1+\theta'}\,dn
<\infty.
\end{equation}
\end{prop}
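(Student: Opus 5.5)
We will prove the bound for $\widetilde\alpha:=\widetilde\alpha_{\eta,M,\theta}$ with a small fixed $\theta$, and take $\theta'=\theta$. By \eqref{eq:auxiliary-comparisons}, $\widehat\alpha_{\eta,M}^{1+\theta}\ll\widetilde\alpha$, so it suffices to show
$$\sup_{t\ge0}\int_{B_N(R_0)}\widetilde\alpha(a_tn;\Lambda)\,dn<\infty.$$

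\emph{Choice of constants and step lengths.} Fix $\theta$ so small that $BM^2\sqrt\theta\le 1/20$, where $B$ is the constant of \Cref{prop:avoidance'}. Put $M'=BM^2$. Then \Cref{prop:globalcontraction} (on $B_N(2R_0)$) and \Cref{prop:avoidance'} (on $B_N(3R_0)$) are both available. Given large $t$, we decompose $t=s_1+\cdots+s_I$ by \Cref{lem:ndecomposition}, with $D$ large and $\delta$ small. The first step $s_1=Ds_2$ is a long "warm-up" step. By \Cref{lem:d2-quasinull-at-identity}, after enlarging $T$, the starting point $(e,\Lambda)$ is non-exceptional at every scale used. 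We then pass to a random walk by \Cref{lem:randomwalkgeneral}, writing $g_i=a_{s_i}n_i\cdots a_{s_1}n_1$.

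\emph{Stopping-time iteration.} Set $F_i(g_i)=\widetilde\alpha(g_i;\Lambda)$. At step $i+1$ we split according to whether $(g_i,\Lambda)\in\mathcal E_{s_{i+1},\eta,M,M'}$.
\begin{itemize}
\item Off the exceptional set, \Cref{prop:globalcontraction} gives
$$\int F_{i+1}\,d\nu(n_{i+1})\le C_\theta e^{-\theta s_{i+1}/2}F_i+C_\theta e^{C_\theta s_{i+1}}.$$
\item On the exceptional set, we use only the crude bound $\widetilde\alpha\ll\alpha^{2}$ (valid since $\phi_\theta(w)\ll\|w\|^{-1}\cdot(\text{small powers})$, and the exceptional condition gives polynomial control) together with \eqref{eq:ordinary-height-small-moment}.
\end{itemize}
For the exceptional term we then integrate back over the total time $s_1+\cdots+s_i$, using the comparison in \Cref{lem:randomwalkgeneral}, and apply \Cref{prop:avoidance'} with $s=s_{i+1}$ and elapsed time $t_i=s_1+\cdots+s_i\ge BM's_{i+1}$; this inequality is guaranteed by the choice $s_1=Ds_2$ with $D\ge BM'$. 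The exceptional contribution is then $\ll e^{C s_{i+1}}\big(e^{-M's_{i+1}/(BM)}+e^{-t_i/(BM)}\big)$. Since $M'/(BM)=M\ge$ the growth exponent (enlarge $M'$ if needed, readjusting $\theta$), this contribution is summable.

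\emph{Summing the recursion.} Let $A_i=\mathbb E[F_i]$. The recursion reads
$$A_{i+1}\le C e^{-\theta s_{i+1}/2}A_i+Ce^{Cs_{i+1}}+(\text{exceptional term}).$$
Because the $s_i$ decrease geometrically, with $s_i=(1+\delta)s_{i+1}$, the additive terms $e^{Cs_{i+1}}$ are dominated by the contraction gained at the previous steps, provided $\delta$ is small compared with $\theta/C$. The initial value $A_1$ is controlled as follows. The warm-up step starts from the fixed lattice $\Lambda$, whose height is finite. Using contraction with $s_1$ huge, the first step yields $A_1\ll e^{Cs_1}$ at worst, but multiplied by the factor $e^{-\theta s_1/2}\widetilde\alpha(e;\Lambda)$ plus $e^{Cs_1}$. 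The additive constant must therefore be absorbed by later contractions. I expect this bookkeeping to be the main obstacle: matching the additive $e^{C_\theta s}$ losses against the contraction $e^{-\theta s/2}$ over a geometric sequence, while keeping $t_i\ge BM's_{i+1}$. Following \cite[Section~6]{Kim}, the fix is to compare with the stationary bound $e^{Cs_{i}}/(1-\text{rate})$, and to end with $s_I\in[T,2T]$ so that the final constant is $e^{2CT}$, independent of $t$. Small $t$ is handled trivially by \eqref{eq:d2-log-lipschitz-wedge}.
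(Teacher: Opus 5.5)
\textbf{Verdict: there is a genuine gap.} It lies in your treatment of the exceptional term, and it also forces a different choice of $\theta'$.

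\textbf{The failing step.} You bound $\mathbb E[\widetilde\alpha_{\eta,M,\theta}(g_{i+1};\Lambda)\mathbf 1_{\mathcal E}(g_i)]$ using $\widetilde\alpha_{\eta,M,\theta}\ll\alpha^2$. This domination is false, and its justification has the inequality reversed. Write $\phi_\theta(w)=\|w\|^{-1}\bigl(\|w\|/\|w-\pi_cw\|\bigr)^{2\theta}\bigl(\|w\|/\|w-\pi_rw\|\bigr)^{2\theta}$. Then $\phi_\theta(w)\gg\|w\|^{-1}$, which is what \eqref{eq:auxiliary-comparisons} uses. But the two ratios are unbounded, so $\phi_\theta(hw)\to\infty$ as $hw$ approaches $\mathcal M_c\cup\mathcal M_r$ with $\|hw\|\asymp1$.

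This blow-up has nothing to do with the cusp. It occurs for $h\Lambda$ in a compact part of $X$, for instance near a lattice with several short rational isotropic planes, such as $\M_2(\mathbb Z)$. Such lattices are approached by $H\Lambda$, since $H\Lambda$ is dense.

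Polynomial lower bounds on these distances come from \emph{non}-exceptionality. That is exactly how $(h,\Delta)\notin\mathcal E_{s,\eta,M,M'}$ enters the proof of \Cref{prop:globalcontraction}. On $\mathcal E_{s_{i+1},\eta,M,M'}$, the only lower bound comes from $w\notin\mathcal H_{\eta,M}$ transported by $g_i$. That bound degrades like a power of $e^{-t_i}$. The avoidance factor $e^{-M's_{i+1}/(BM)}$ cannot compensate when $t_i\gg s_{i+1}$. So your recursion for $A_i=\mathbb E[\widetilde\alpha_{\eta,M,\theta}(g_i;\Lambda)]$ does not close.

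\textbf{The missing idea.} This is why the paper takes $\theta'$ much smaller than $\theta$. On the event that $(h_m,\Lambda)$ is exceptional at scale $s_{m+1}$, you stop tracking $\widetilde\alpha_{\eta,M,\theta}$ and bound the final target directly.
\begin{itemize}
\item Since $\widehat\alpha_{\eta,M}\ll\alpha$ involves no $\phi_\theta$, you can iterate \eqref{eq:ordinary-height-small-moment} with exponent $1+\theta'$ over the remaining steps.
\item Those steps have total length $\lesssim s_{m+1}/\delta$. This bounds the conditional integral of $\widehat\alpha_{\eta,M}(h_I;\Lambda)^{1+\theta'}$ by roughly $e^{(2C\theta'\delta^{-1}+C)s_{m+1}}\bigl(\alpha(h_m\Lambda)^2+1\bigr)$.
\item \Cref{prop:avoidance'}, applied at elapsed time $S_m\ge BM's_{m+1}$, beats this growth thanks to \eqref{eq:theta-prime-iteration}.
\item Off all exceptional events, you iterate \Cref{prop:globalcontraction} for $\widetilde\alpha_{\eta,M,\theta}$, as you intended.
\end{itemize}

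\textbf{Why $\theta'=\theta$ is incompatible.} The argument needs the hierarchy $\theta'\ll\delta\lesssim\theta/(DC_\theta)$ with $D\ge BM'$. The bound on $\delta$ is what absorbs the warm-up term $e^{C_\theta s_1}$. With $\theta'=\theta$, the growth exponent $2C\theta\delta^{-1}$ is at least of order $C\,C_\theta BM'$. This exceeds the avoidance exponent $M'/(BM)$. Both exponents are linear in $M'$, so enlarging $M'$ does not help.

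You should therefore prove the statement for a separate, small $\theta'$, not a uniform first moment of $\widetilde\alpha_{\eta,M,\theta}$. The rest of your outline matches the paper: the step decomposition, the random-walk comparison, \Cref{lem:d2-quasinull-at-identity} for the first step, and absorbing the additive $e^{C_\theta s_m}$ terms through later contraction.
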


\begin{proof}
Let $B$ be the constant in the fixed-ball form of
\Cref{prop:avoidance'}, and let $C,\gamma_0$ be as in
\eqref{eq:ordinary-height-small-moment}. Put
$$
M'=100B(C+1)M^2.
$$
Choose $0<\theta<10^{-4}$ such that
$M'\sqrt\theta\le1/20$, and let $C_\theta$ be the constant in the
fixed-ball form of \Cref{prop:globalcontraction}. Next, choose
$$
D\ge4BM',
\qquad
\delta=\frac{\theta}{100D(C_\theta+1)}.
$$
Finally, choose
$0<\theta'\le\min\{\theta,\gamma_0\}$ sufficiently small that
\begin{equation}\label{eq:theta-prime-iteration}
2C\theta'\delta^{-1}+C
<
\frac{M'}{2BM}.
\end{equation}
Such a choice is possible by the definition of $M'$. Choose $T\ge\max\{T_0,e\}$ sufficiently large that
$$
C_\theta e^{-\theta s/2}
\le
e^{-\theta s/4}
\qquad
(s\ge T),
$$
and so that \Cref{lem:d2-quasinull-at-identity} applies for $s\ge T$.
Since $M'\ge5M$, we have
$$
\mathcal E_{s,\eta,M,M'}
\subset
\mathcal E_{s,\eta,M,5M}.
$$
It follows that
$(e,\Lambda)\notin\mathcal E_{s,\eta,M,M'}$ for every $s\ge T$.

For $0\le t<T/\delta$, the required estimate follows from compactness and
the log-Lipschitz estimates. Suppose, therefore, that $t\ge T/\delta$, and
choose
$$
t=s_1+\cdots+s_I
$$
as in \Cref{lem:ndecomposition}.
For $n_j\in B_N(2R_0)$, put
$$
h_m
=
a_{s_m}n_m\cdots a_{s_1}n_1
\qquad
(1\le m\le I),
$$
and define
$$
\mathsf Z_t
:=
\int_{B_N(2R_0)^I}
\widehat\alpha_{\eta,M}(h_I;\Lambda)^{1+\theta'}\,
d\nu(n_1)\cdots d\nu(n_I).
$$
By \Cref{lem:randomwalkgeneral}, it is enough to prove that
$\sup_t\mathsf Z_t<\infty$.

For $1\le m<I$, let
$$
\Theta_m
:=
\left\{
(n_1,\ldots,n_I)\in B_N(2R_0)^I:
(h_m,\Lambda)\in
\mathcal E_{s_{m+1},\eta,M,M'}
\right\},
$$
and put $\Theta=\bigcup_{m=1}^{I-1}\Theta_m$. Thus
$$
\mathsf Z_t
\le
\mathsf Y_t+\sum_{m=1}^{I-1}\mathsf E_{t,m},
$$
where $\mathsf Y_t$ is the integral over
$B_N(2R_0)^I\smallsetminus\Theta$, and $\mathsf E_{t,m}$ is the integral over
$\Theta_m$.

We first estimate the exceptional contributions. By Minkowski's theorem,
$\alpha$ is bounded below by a positive constant on $X$. Hence
$$
\widehat\alpha_{\eta,M}(h;\Lambda)
\le
\max\{1,\alpha(h\Lambda)\}
\ll
\alpha(h\Lambda).
$$
Iterating \eqref{eq:ordinary-height-small-moment}, as in
\cite[proof of Theorem~1.9, especially (6.6)]{Kim}, gives, for every fixed
prefix $(n_1,\ldots,n_m)$,
\begin{align*}
&\int_{B_N(2R_0)^{I-m}}
\widehat\alpha_{\eta,M}(h_I;\Lambda)^{1+\theta'}\,
d\nu(n_{m+1})\cdots d\nu(n_I)\\
&\qquad\ll
\delta^{-1}\log(e+s_{m+1})\,
e^{(2C\theta'\delta^{-1}+C)s_{m+1}}
\bigl(\alpha(h_m\Lambda)^2+1\bigr).
\end{align*}
By \eqref{eq:theta-prime-iteration}, the exponential factor is at most
$
e^{M's_{m+1}/(2BM)}.
$

Put $S_m=s_1+\cdots+s_m$. Since
$$
S_m\ge s_1=Ds_2\ge BM's_{m+1},
$$
\Cref{lem:randomwalkgeneral} and the fixed-ball form of
\Cref{prop:avoidance'} give
\begin{multline*}
\int_{B_N(2R_0)^m}
\bigl(\alpha(h_m\Lambda)^2+1\bigr)
\mathbf 1_{\mathcal E_{s_{m+1},\eta,M,M'}}(h_m,\Lambda)\\
d\nu(n_1)\cdots d\nu(n_m)
\ll
e^{-M's_{m+1}/(BM)}.
\end{multline*}
Consequently, after increasing $T$,
$$
\mathsf E_{t,m}
\ll
\delta^{-1}\log(e+s_{m+1})
e^{-M's_{m+1}/(2BM)}
\ll
e^{-s_{m+1}}.
$$
Since
$s_{m+1}\ge(1+\delta)^{I-m-1}T$, we obtain
\begin{equation}\label{eq:iteration-exceptional-sum}
\sum_{m=1}^{I-1}\mathsf E_{t,m}
\ll
\sum_{j=0}^{\infty}e^{-(1+\delta)^jT}
<\infty.
\end{equation}

We next estimate the contribution from the complement of $\Theta$. The
choice of $T$ ensures that $(e,\Lambda)$ is outside the exceptional set
needed for the first step, while the definition of $\Theta$ ensures that
each subsequent prefix is outside the exceptional set needed for the next
step. Since $\theta'\le\theta$, the comparison
\eqref{eq:auxiliary-comparisons} and successive applications of the
fixed-ball form of \Cref{prop:globalcontraction} give
\begin{equation}\label{eq:iteration-good-bound}
\mathsf Y_t
\ll
e^{-\theta t/4}
\widetilde\alpha_{\eta,M,\theta}(e;\Lambda)
+
\sum_{m=1}^I
\exp\left(
C_\theta s_m
-
\frac{\theta}{4}(s_{m+1}+\cdots+s_I)
\right),
\end{equation}
where an empty sum in the exponent is understood to be zero.

Let $I_0$ be the largest index such that $s_{I_0}\ge2s_I$. For
$2\le m\le I_0$, the geometric relation among the $s_j$ gives
$$
s_{m+1}+\cdots+s_I
\ge
\frac{s_m}{2\delta}.
$$
If $I_0\ge2$, then similarly
$$
s_2+\cdots+s_I
\ge
\frac{s_1}{2D\delta}.
$$
By the choice of $\delta$, the terms in the sum in
\eqref{eq:iteration-good-bound} with $m\le I_0$ are
$O(e^{-s_m})$. If $I_0=1$, then $s_2<2s_I$, and hence
$$
s_1=Ds_2<2Ds_I\le4DT;
$$
thus the $m=1$ term in the error sum is bounded independently of $t$.

Finally,
$$
I-I_0\ll\delta^{-1},
\qquad
s_m<2s_I\le4T
\quad
(m>I_0).
$$
The remaining terms in the error sum are therefore bounded independently
of $t$. It follows from \eqref{eq:iteration-good-bound} that
$\sup_t\mathsf Y_t<\infty$. Together with
\eqref{eq:iteration-exceptional-sum}, this proves that
$\sup_t\mathsf Z_t<\infty$. The first inequality in
\Cref{lem:randomwalkgeneral} now yields
\eqref{eq:d2-N-average-main}.
\end{proof}

It remains to transfer the uniform bound from the horospherical averages
to the $K$-averages.

\begin{lem}[From $N$-averages to $K$-averages]
\label{lem:K-from-N-d2}
There exist $R_0>1$ and elements
$k_1,\ldots,k_q\in K$ such that the following holds.
Let $F:H\to[1,\infty)$ be measurable and suppose that, for every compact
set $\mathcal C\subset H$, there is $A_{\mathcal C}>0$ such that
\[
F(gh)\le A_{\mathcal C}F(h)
\qquad(g\in\mathcal C,\ h\in H).
\]
If
\[
\sup_{t\ge0}\int_{B_N(R_0)}F(a_tnk_j)\,dn<\infty
\qquad(1\le j\le q),
\]
then
\[
\sup_{t\ge0}\int_KF(a_tk)\,dk<\infty.
\]
\end{lem}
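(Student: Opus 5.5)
The plan is the standard ``$K$ is covered by a Bruhat-type chart of $N$'' argument, carried out on each $\SL_2(\mathbb R)$ factor separately and then taken as a product. Let $P^-=\{\text{lower-triangular... }\}$ denote the opposite parabolic in $\SL_2(\mathbb R)$, namely the upper-triangular group $P=\{ \text{diag}\cdot u^+\}$, which is normalized by $b_t$ and contracted (or at least not expanded) under conjugation by $b_t$ for $t\ge0$. The product map $N_1\times P\to\SL_2(\mathbb R)$, $(u_\xi,p)\mapsto u_\xi p$, is a diffeomorphism onto an open dense set. Since $\SO(2)$ is compact, I would choose finitely many rotations $k^{(1)},\dots,k^{(q_0)}$ such that every $k\in\SO(2)$ can be written as $k=u_\xi p\,k^{(j)}$ with $|\xi|\le R_0$ and $p$ lying in a fixed compact set $\mathcal P_0\subset P$. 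This works because $\SO(2)k^{(j)-1}$ near the identity lies in the big cell, and the overlaps are arranged so that each piece stays in a compact part of the chart. Moreover, the Haar measure $dk$ restricted to each piece pushes forward to a measure on $\xi$ with bounded density with respect to $d\xi$, since the chart is a diffeomorphism on the relevant compact pieces. Taking products gives $k_1,\dots,k_q\in K$ with $q=q_0^2$, the ball $B_N(R_0)$, and a compact $\mathcal P\subset P\times P$.

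The key computation is then
\[
a_tk=a_t n\, p\, k_j=(a_tpa_{-t})\,a_t n k_j .
\]
For $t\ge0$, the element $a_tpa_{-t}$ stays in a fixed compact set $\mathcal C\subset H$. Indeed, $b_t$ conjugation scales the upper unipotent entry by $e^{-2t}$ (with $b_t=\mathrm{diag}(e^{-t},e^t)$, so $b_t\begin{pmatrix}1&x\\0&1\end{pmatrix}b_{-t}$ has entry $e^{-2t}x$) and fixes the diagonal part. Hence $\mathcal C$ can be taken to be the closure of $\{a_tpa_{-t}: t\ge0,\ p\in\mathcal P\}$, which is bounded. The hypothesis then gives
\[
F(a_tk)\le A_{\mathcal C}F(a_tnk_j).
\]

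Integrating over $K$, splitting $K$ into the $q$ pieces and changing variables on each piece with its bounded Jacobian, I obtain
\[
\int_KF(a_tk)\,dk\ll A_{\mathcal C}\sum_j\int_{B_N(R_0)}F(a_tnk_j)\,dn,
\]
which is uniformly bounded in $t$.

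The main point to check carefully is the sign convention. $N$ consists of lower unipotents, which are \emph{expanded} by $a_t$, since $b_tu_\xi b_{-t}=u_{e^{2t}\xi}$. The complementary factor $P$, consisting of the diagonal and upper unipotents, is therefore the non-expanded one, and this is exactly what is needed for $a_tpa_{-t}$ to remain bounded. Beyond that, the only routine verification is that the pushforward density is bounded above on each piece, which follows from smoothness of the inverse chart on compacta.
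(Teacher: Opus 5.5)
Your strategy matches the paper's: finitely many charts on $K$, a parabolic factor absorbed by the left-translation hypothesis on $F$, and bounded Jacobians. Your check that $a_t$ expands the lower unipotents is also correct. However, the factors are multiplied in the wrong order, so the key identity $a_t n p k_j=(a_tpa_{-t})\,a_t n k_j$ is false. Its right-hand side equals $a_t p n k_j$, and $n$ and $p$ do not commute.

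With your chart $k=u_\xi p\,k^{(j)}$, the parabolic factor is trapped in the middle of $a_tu_\xi p\,k^{(j)}$. You cannot move it to the right, because the hypothesis only controls left translates $F(gh)\le A_{\mathcal C}F(h)$. In the application $F(h)=\widehat\alpha_{\eta,M}(h;\Lambda)^{1+\theta'}$, and a right translate would change the lattice $\Lambda$. Moving it to the left means conjugating by $a_tu_\xi$, which is not bounded. For $p=\begin{pmatrix}1&x\\0&1\end{pmatrix}$ one has $(u_\xi p u_{-\xi})_{21}=-\xi^2x$, and conjugation by $b_t$ multiplies this entry by $e^{2t}$. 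So the pointwise bound $F(a_tk)\le A_{\mathcal C}F(a_tnk_j)$ does not follow from what you wrote.

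The repair is to use the other big cell and write $k=p\,u_\xi\,k^{(j)}$ with $p$ upper triangular. The map $(p,u_\xi)\mapsto pu_\xi$ is a diffeomorphism from $P\times\{u_\xi:\xi\in\mathbb R\}$ onto $\{g\in\SL_2(\mathbb R):g_{22}\ne0\}$, with $\xi=g_{21}/g_{22}$. For $k_\vartheta\in\SO(2)$ this gives $\xi=\tan\vartheta$, $p=\begin{pmatrix}\sec\vartheta&-\sin\vartheta\\0&\cos\vartheta\end{pmatrix}$, and $d\vartheta=\cos^2\vartheta\,d\xi$. On $\{|\cos\vartheta|\ge1/\sqrt2\}$ you therefore get $|\xi|\le1$, $p$ in a compact set, and a bounded Jacobian. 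The remaining part of $\SO(2)$ is handled the same way with $k^{(2)}=k_{\pi/2}$.

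Now $a_tk=(a_tpa_{-t})\,a_tu_\xi k^{(j)}$ holds exactly. Your computation that $b_t$-conjugation scales the upper-right entry by $e^{-2t}$ shows that $\{a_tpa_{-t}:t\ge0\}$ is relatively compact, and the rest of your argument goes through unchanged. In product form this is precisely the paper's chart $k=p_\gamma(k)n_{r_\gamma(k)}\gamma$, with the parabolic factor $p_\gamma(k)$ to the left of the $N$-coordinate.
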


\begin{proof}
This is the product-group version of the local
parabolic-unipotent coordinate argument at the beginning of
\cite[Subsection~7.2]{Kim}. In each of finitely many coordinate charts,
one may write
$$
k=p_\gamma(k)n_{r_\gamma(k)}\gamma,
$$
where $r_\gamma(k)\in B_N(R_0)$, the coordinate Jacobians are uniformly
bounded, and
$a_tp_\gamma(k)a_{-t}$ ranges over a fixed compact subset of $H$.
The compact-set hypothesis on $F$ therefore reduces each chart integral to
an $N$-average over $B_N(R_0)$.
\end{proof}

\begin{theorem}[Uniform moment bound]
\label{thm:moment-d2}
Let $\Lambda\in X$ be Diophantine and not determinant-rational. Choose
$0<\eta<1$ and $M>1$ such that \eqref{fs} holds. Then there exists
$\theta'>0$ such that
$$
\sup_{t\ge0}
\int_K
\widehat\alpha_{\eta,M}(a_tk;\Lambda)^{1+\theta'}\,dk
<\infty.
$$
\end{theorem}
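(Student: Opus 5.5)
The plan is to combine \Cref{prop:maintheoremforu} with \Cref{lem:K-from-N-d2}, applied to the function
$$
F(h):=\widehat\alpha_{\eta,M}(h;\Lambda)^{1+\theta'},
$$
which takes values in $[1,\infty)$ because the maximum in \Cref{def:modified-height-d2} includes $1$. There are three things to check: that $F$ is quasi-invariant under left multiplication by compact sets, that the $N$-moment bound holds at each of the base points $k_j\Lambda$, and that the exponent $\theta'$ and the constants can be chosen uniformly in $j$.

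\emph{Compact quasi-invariance.} Let $\mathcal C\subset H$ be compact and $g\in\mathcal C$. Then $g$ distorts the norms on each exterior power $\wedge^i\M_2(\mathbb R)$ by at most a factor $A=A_{\mathcal C}$, exactly as in \eqref{eq:d2-log-lipschitz-wedge}. So $\alpha_1(gh\Lambda)\le A\alpha_1(h\Lambda)$, and similarly for $\alpha_3$. For the degree-$2$ term, the index set of planes $V$ with $\mathsf w_{\Lambda,V}\notin\mathcal H_{\eta,M}$ depends only on $\Lambda$ and not on $h$. This is the point of defining the quasi-null set at the base lattice. Hence $\|gh\mathsf w_{\Lambda,V}\|^{-1}\le A\|h\mathsf w_{\Lambda,V}\|^{-1}$ termwise, and so $F(gh)\le A^{1+\theta'}F(h)$.

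\emph{Moment bound at each base point.} We need
$$
\sup_t\int_{B_N(R_0)}\widehat\alpha_{\eta,M}(a_tnk_j;\Lambda)^{1+\theta'}\,dn<\infty .
$$
Note that $\widehat\alpha_{\eta,M}(hk;\Lambda)$ is not literally $\widehat\alpha_{\eta,M}(h;k\Lambda)$, because the quasi-null set $\mathcal H_{\eta,M}$ is tested on $\mathsf w_{\Lambda,V}$ rather than on $k\mathsf w_{\Lambda,V}$. However, $k\in K$ acts isometrically and commutes with $\pi_c$ and $\pi_r$, so $k\mathcal H_{\eta,M}=\mathcal H_{\eta,M}$. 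Moreover $\mathsf w_{k\Lambda,kV}=k\mathsf w_{\Lambda,V}$. It follows that
$$
\widehat\alpha_{\eta,M}(hk;\Lambda)=\widehat\alpha_{\eta,M}(h;k\Lambda).
$$
We then apply \Cref{prop:maintheoremforu} to the lattice $k_j\Lambda$. Its hypotheses hold there: determinant-rationality is $H$-invariant; the Diophantine condition is unchanged, since $Q_{k\Lambda,k\mathcal B}=Q_{\Lambda,\mathcal B}$ ($\det$ is $H$-invariant); and \eqref{fs} holds with the same $\eta,M$, since $k$ preserves norms and the projections, so $\mathcal Q_{k\Lambda}(R,\varepsilon)=k\mathcal Q_\Lambda(R,\varepsilon)$.

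\emph{Uniform exponent.} \Cref{prop:maintheoremforu} yields some $\theta'_j>0$ for each $j$. Take $\theta'=\min_j\theta'_j$. Since $F\ge1$, Hölder's inequality on the finite-measure set $B_N(R_0)$ lets a smaller exponent inherit the bound. \Cref{lem:K-from-N-d2} then gives the theorem.

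The main obstacle I expect is the bookkeeping that makes the $R_0$ of \Cref{lem:K-from-N-d2} compatible with the $R_0$ fixed before \Cref{prop:maintheoremforu}. The paper arranges this by choosing $R_0$ from the charts of \cite{Kim} at the outset. I would simply take the larger of the two, noting that all estimates hold on any fixed ball $B_N(R)$.
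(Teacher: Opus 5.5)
Your proposal is correct and follows the paper's proof, which applies \Cref{lem:K-from-N-d2} to $F=\widehat\alpha_{\eta,M}(\cdot\,;\Lambda)^{1+\theta'}$ using \Cref{prop:maintheoremforu}, with the compact-set hypothesis coming from the log-Lipschitz estimates. Your extra step makes explicit what the paper's one-line proof leaves implicit: the lemma needs bounds at the base points $k_j$. You supply them via the identity $\widehat\alpha_{\eta,M}(hk;\Lambda)=\widehat\alpha_{\eta,M}(h;k\Lambda)$, by applying \Cref{prop:maintheoremforu} to each translate $k_j\Lambda$, and by taking $\theta'=\min_j\theta'_j$.
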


\begin{proof}
Apply \Cref{lem:K-from-N-d2} to
$
F(h)
=
\widehat\alpha_{\eta,M}(h;\Lambda)^{1+\theta'},
$
using \Cref{prop:maintheoremforu}. The compact-set hypothesis follows from
the log-Lipschitz estimates on the finitely many exterior powers.
\end{proof}

\section{Quasi-null contribution and modified Siegel transforms}
\label{sec:modified-siegel-d2}

The moment bound in \Cref{thm:moment-d2} controls a modified height that
omits all quasi-null rational two-planes. For the counting problem,
however, we remove only the exact rational isotropic planes, which produce
the singular contribution treated in
Section~\ref{sec:singular-d2}. We show that the remaining quasi-null but
non-isotropic planes contribute negligibly in the limit. We then use the
uniform moment bound to extend Shah's equidistribution theorem
\Cref{shah} from bounded continuous functions to the modified Siegel
transform.

Throughout the section, fix a Diophantine lattice
$\Lambda\in X$ that is not determinant-rational. Choose
$0<\eta_0<1$ and $M>1$ such that \eqref{fs} holds, and fix
$$
0<\eta\le2^{-M}\eta_0.
$$
All quasi-null planes below are defined using this smaller parameter
$\eta$. Since
$$
\mathcal Q_\Lambda(R,\eta R^{-M})
\subset
\mathcal Q_\Lambda(R,\eta_0R^{-M}),
$$
the finite-scale condition \eqref{fs}, and hence
\Cref{thm:moment-d2}, remain valid with the parameters $\eta$ and $M$.

For a $\Lambda$-rational two-plane $V$, put
$w_V=\mathsf w_{\Lambda,V}$. Define
$$
\mathcal X_{\eta,M}(\Lambda)
=
\left\{
V:
V\text{ is a $\Lambda$-rational two-plane and }
w_V\in\mathcal H_{\eta,M}
\right\},
$$
and
$$
\widehat{\mathcal X}_{\eta,M}(\Lambda)
=
\mathcal X_{\eta,M}(\Lambda)\smallsetminus\mathcal I_\Lambda.
$$
Thus $\widehat{\mathcal X}_{\eta,M}(\Lambda)$ is the collection of
quasi-null rational two-planes that are not exactly isotropic, while
$\mathcal I_\Lambda$ is the finite collection of exact rational
column- and row-isotropic planes.

We begin with the critical first-moment estimate.

\begin{lem}[Critical first-moment estimate]
\label{lem:critical-K-average-d2}
There is a constant $C\ge1$ such that, for every
$0\ne w\in\wedge^2\M_2(\mathbb R)$ and every $t\ge0$,
\begin{equation}\label{eq:critical-K-average-d2}
\int_K\|a_tkw\|^{-1}\,dk
\le
C\|w\|^{-1}.
\end{equation}
Moreover, if $w\notin\mathcal M_c\cup\mathcal M_r$, then
\begin{equation}\label{eq:critical-K-average-decay-d2}
\lim_{t\to\infty}
\int_K\|a_tkw\|^{-1}\,dk
=
0.
\end{equation}
\end{lem}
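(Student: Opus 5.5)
Write $w=w_c+w_r$ with $w_c=\pi_cw\in\mathcal M_c$ and $w_r=\pi_rw\in\mathcal M_r$. Since $a_t$, $K$ and the projections all respect the decomposition \eqref{decom}, for $k=(k_1,k_2)$ we have
$$
\|a_tkw\|\asymp\max\{\|b_tk_1w_c\|,\ \|b_tk_2w_r\|\}.
$$
Here the left factor acts only on $\mathcal M_c\simeq\operatorname{Sym}^2(\mathsf C)$ and the right factor only on $\mathcal M_r\simeq\operatorname{Sym}^2(\mathsf R)$. This reduces everything to $\SL_2(\mathbb R)$ acting on $\operatorname{Sym}^2(\mathbb R^2)\simeq\mathbb R^{2,1}$, with $K$-averages in place of $N$-averages.

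\textbf{The bound \eqref{eq:critical-K-average-d2}.} Suppose $\|w_c\|\ge\|w_r\|$; the other case is symmetric. Then $\|a_tkw\|^{-1}\ll\|b_tk_1w_c\|^{-1}$, so it suffices to show
$$
\int_{\SO(2)}\|b_tk_1v\|^{-1}\,dk_1\ll\|v\|^{-1}\qquad(v\in\operatorname{Sym}^2(\mathbb R^2),\ t\ge0).
$$
I would prove this directly. Normalize $\|v\|=1$ and write $k_1v=aZ_1^2+bZ_1Z_2+cZ_2^2$, with weights $-2,0,2$ as in \Cref{prop:local-contraction-d2}. Then
$$
\|b_tk_1v\|^2\asymp e^{-4t}a^2+b^2+e^{4t}c^2.
$$
As $k_1$ runs over $\SO(2)$ with angle $\phi$, the coefficients $(c,b)$ are trigonometric polynomials of degree $2$ in $\phi$. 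The sublevel set where both $|b|$ and $|c|$ are small is where $\phi$ is near a root, and the root structure is governed by the discriminant of $v$.

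One route is to transfer to the $N$-average. Alternatively, the Iwasawa decomposition lets us apply \Cref{lem:K-from-N-d2} together with the $\beta=1$ case of \Cref{prop:local-contraction-d2}(ii). That case gives a bound $O(\|v\|^{-1})$ uniformly in $t$, and the log-Lipschitz compact-set hypothesis holds for $F(h)=\|hv\|^{-1}/\|v\|^{-1}$. Since the $N$-estimate is uniform in $v$, the constant is uniform, which proves \eqref{eq:critical-K-average-d2}.

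\textbf{The decay \eqref{eq:critical-K-average-decay-d2}.} Now assume $w_c\ne0$ and $w_r\ne0$. I would use $\|a_tkw\|\gg\|b_tk_1w_c\|^{1/2}\|b_tk_2w_r\|^{1/2}$, which holds because the maximum dominates the geometric mean. Fubini then gives
$$
\int_K\|a_tkw\|^{-1}\,dk\ll\prod_{\ast\in\{c,r\}}\int_{\SO(2)}\|b_tk'w_\ast\|^{-1/2}\,dk'.
$$
Each factor is a subcritical ($\beta=1/2$) average over a single $\SL_2$ factor. Transferring \Cref{prop:local-contraction-d2}(ii) with $\beta=1/2$ to $K$ as above, each factor is $\ll(1+t)e^{-t}\|w_\ast\|^{-1/2}$. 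The product therefore tends to $0$; in fact it decays like $(1+t)^2e^{-2t}\|w_c\|^{-1/2}\|w_r\|^{-1/2}$.

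\textbf{Main obstacle.} The delicate point is transferring the one-factor $N$-estimates to $K$-averages uniformly in the vector. Concretely, this means checking that \Cref{lem:K-from-N-d2}, or its one-factor analogue, can be applied to $F(h)=\|hv\|^{-\beta}\|v\|^{\beta}$ with constants independent of $v$. This holds because $\|ghv\|\asymp_{\mathcal C}\|hv\|$ for $g$ in a compact set $\mathcal C$. It also requires the $N$-estimates in every chart, $\int_{B_N(R_0)}\|a_tnk_jv\|^{-\beta}\,dn\ll\|k_jv\|^{-\beta}=\|v\|^{-\beta}$, which are supplied by \Cref{prop:local-contraction-d2} on $B_N(R_0)$.
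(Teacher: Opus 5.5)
Your proposal is correct. For \eqref{eq:critical-K-average-d2} it is essentially the paper's argument. The paper applies \Cref{prop:product-local-d2}\textnormal{(ii)} with $\beta=1$, whose proof is exactly your reduction to the dominant summand $w_c$ or $w_r$. It then transfers from $N$ to $K$ through the finitely many charts $k=p_\gamma(k)n_{r_\gamma(k)}\gamma$.

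For the decay \eqref{eq:critical-K-average-decay-d2} you take a different route.

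\textbf{The paper's route.} It uses $\|v\|^{-1}\ll_\theta\phi_\theta(v)$, the weighted estimate \Cref{lem:weighted-local-d2}, and H\"older to get $\int_{B_N(R)}\|a_tnw\|^{-1}\,dn\ll e^{-\theta t/(1+\theta)}\phi_\theta(w)$. It then applies the same charts.

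\textbf{Your route.} You use the pointwise bound $\|a_tkw\|^{-1}\le\|b_tk_1w_c\|^{-1/2}\|b_tk_2w_r\|^{-1/2}$, Fubini, and the subcritical one-factor estimate at $\beta=1/2$ in each variable.

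\textbf{Comparison.} This is the same product-splitting mechanism that proves \Cref{lem:weighted-local-d2}. There the exponents $(1+\theta)(1-2\theta)$ and $2\theta(1+\theta)$ are forced to be unbalanced, because a $(1+\theta)$-moment is needed for the Margulis inequality. For a bare first moment the balanced split $\tfrac12+\tfrac12$ is available; formally, $\|w_c\|^{-1/2}\|w_r\|^{-1/2}=\phi_{1/4}(w)$. Your route is more self-contained and yields a better explicit rate. Each factor gives $e^{-t/2}\|w_*\|^{-1/2}$ already from the stated $c_{1/2}=1/2$, so the product is $\ll e^{-t}\|w_c\|^{-1/2}\|w_r\|^{-1/2}$. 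Your sharper $(1+t)e^{-t}$ per factor comes from the model integral in the sketch rather than from the proposition as stated, but either suffices. The paper's route costs nothing extra, since the weighted lemma is needed anyway for the contraction argument.

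\textbf{Two small points.} First, \Cref{lem:K-from-N-d2} is stated for $F\ge1$ and only asserts finiteness. What you actually need, as the paper does, is the chart inequality $\int_KF(a_tk)\,dk\ll\sum_j\int_{B_N(R_0)}F(a_tnk_j)\,dn$ from its proof. That inequality is what carries both the uniformity in $v$ and the decay in $t$. Second, no one-factor analogue of the chart lemma is needed. Functions of $hw_c$ or $hw_r$ are functions on $H$, so the product charts apply to them directly, or to the product integrand itself.
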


\begin{proof}
By \Cref{prop:product-local-d2}\textnormal{(ii)} with $\beta=1$, for
every fixed $R>0$,
$$
\int_{B_N(R)}
\|a_tnw\|^{-1}\,dn
\ll_R
\|w\|^{-1}.
$$
On each of the finitely many local charts used at the beginning of
\cite[Subsection~7.2]{Kim}, one has
$$
k=p_\gamma(k)n_{r_\gamma(k)}\gamma,
\qquad
a_tk
=
(a_tp_\gamma(k)a_{-t})
a_tn_{r_\gamma(k)}\gamma,
$$
where $a_tp_\gamma(k)a_{-t}$ ranges over a fixed compact subset of $H$,
the parameter $r_\gamma(k)$ lies in a fixed ball in $N$, and the coordinate
Jacobians are uniformly bounded. Since multiplication by elements of a
fixed compact set distorts the norm by only a bounded factor, the
fixed-ball estimate gives \eqref{eq:critical-K-average-d2}.

Now suppose that
$w\notin\mathcal M_c\cup\mathcal M_r$. For every sufficiently small
$\theta>0$,
$$
\|v\|^{-1}
\ll_\theta
\phi_\theta(v)
\qquad
\left(
v\notin\mathcal M_c\cup\mathcal M_r
\right).
$$
The fixed-ball form of \Cref{lem:weighted-local-d2} and H\"older's
inequality therefore give
$$
\begin{aligned}
\int_{B_N(R)}
\|a_tnw\|^{-1}\,dn
&\ll_{\theta,R}
\left(
\int_{B_N(R)}
\phi_\theta(a_tnw)^{1+\theta}\,dn
\right)^{1/(1+\theta)}\\
&\ll_{\theta,R}
e^{-\theta t/(1+\theta)}\phi_\theta(w).
\end{aligned}
$$
The summands $\mathcal M_c$ and $\mathcal M_r$ are $H$-invariant, so
$\gamma w\notin\mathcal M_c\cup\mathcal M_r$ in each chart. Applying the
same chart decomposition proves
\eqref{eq:critical-K-average-decay-d2}.
\end{proof}

The finite-scale bound now allows us to sum the preceding estimate over
all non-isotropic quasi-null planes.

\begin{prop}[Vanishing of non-isotropic quasi-null planes]
\label{prop:nonisotropic-quasinull-vanishing-d2}
Assume that $\Lambda$ satisfies \eqref{fs} for $\eta_0$ and $M$, and let
$0<\eta\le2^{-M}\eta_0$. Then
\begin{equation}\label{eq:nonisotropic-quasinull-vanishing-d2}
\lim_{t\to\infty}
\int_K
\sum_{V\in\widehat{\mathcal X}_{\eta,M}(\Lambda)}
\|a_tkw_V\|^{-1}\,dk
=
0.
\end{equation}
\end{prop}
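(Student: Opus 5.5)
The plan is to sort the non-isotropic quasi-null planes into dyadic Plücker-height shells, bound each shell uniformly in $t$, and show the resulting bounds are summable. Then each fixed plane is sent to zero by \eqref{eq:critical-K-average-decay-d2}, and dominated convergence over the countable family finishes the argument.

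\textbf{Dyadic shells.} For $j\ge0$ let $\mathcal S_j$ be the set of $V\in\widehat{\mathcal X}_{\eta,M}(\Lambda)$ with $2^j\le\|w_V\|<2^{j+1}$. Since $\wedge^2\Lambda$ is discrete, $\|w_V\|\gg1$, so only finitely many small $j$ occur below $2^0$, and these can be absorbed into the first shell. Take $V\in\mathcal S_j$ and put $R=2^{j+1}$. Membership in $\mathcal H_{\eta,M}$ gives
$$
\min\{\|w_V-\pi_cw_V\|,\|w_V-\pi_rw_V\|\}\le\eta\|w_V\|^{-M}\le\eta2^{-jM}=2^M\eta R^{-M}\le\eta_0R^{-M}.
$$
This uses $\eta\le2^{-M}\eta_0$, which is why the smaller parameter $\eta$ was fixed. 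Hence $\mathcal S_j\subset\mathcal Q_\Lambda(R,\eta_0R^{-M})$, and \eqref{fs} gives $\#\mathcal S_j\le4$. Consequently $\widehat{\mathcal X}_{\eta,M}(\Lambda)$ is countable, and there are at most four planes at each dyadic height.

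\textbf{Domination.} By \eqref{eq:critical-K-average-d2}, for every $t\ge0$,
$$
\sum_{V\in\mathcal S_j}\int_K\|a_tkw_V\|^{-1}\,dk\le 4C\,2^{-j}.
$$
The series $\sum_j4C2^{-j}$ converges. By Tonelli we may exchange the sum and the $K$-integral. The family of functions of $t$
$$
t\mapsto\int_K\|a_tkw_V\|^{-1}\,dk,\qquad V\in\widehat{\mathcal X}_{\eta,M}(\Lambda),
$$
is therefore dominated, uniformly in $t$, by a summable sequence indexed by $V$: after ordering the planes by shell, each has bound $C2^{-j}$ with multiplicity at most $4$ per $j$.

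\textbf{Pointwise decay and conclusion.} Each $V\in\widehat{\mathcal X}_{\eta,M}(\Lambda)$ is not exactly isotropic. By \Cref{lem:isotropic-plucker-d2}, this means $w_V\notin\mathcal M_c\cup\mathcal M_r$, so \eqref{eq:critical-K-average-decay-d2} gives $\int_K\|a_tkw_V\|^{-1}dk\to0$ for each such $V$. Dominated convergence for series (Tannery's theorem) then yields \eqref{eq:nonisotropic-quasinull-vanishing-d2}.

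The only step with real content is checking that the threshold in each dyadic shell matches the finite-scale bound. This is exactly the factor $2^M$, absorbed by the choice $\eta\le2^{-M}\eta_0$. Everything else follows from \Cref{lem:critical-K-average-d2} and \eqref{fs}.
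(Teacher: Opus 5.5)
Your argument is correct and is essentially the paper's proof. It uses the same dyadic shells, with the factor $2^M$ absorbed by $\eta\le2^{-M}\eta_0$ so that \eqref{fs} gives at most four planes per shell, the uniform bound \eqref{eq:critical-K-average-d2}, and the pointwise decay \eqref{eq:critical-K-average-decay-d2} for non-isotropic planes. Invoking Tannery's theorem just packages the paper's explicit split into a finite head and a uniformly small tail (let $t\to\infty$, then $L\to\infty$). One small imprecision: the planes with $\|w_V\|<1$ need not obey the bound of four, but they are finitely many by discreteness of $\wedge^2\Lambda$, which is all your domination needs.
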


\begin{proof}
Let
$$
\mathcal A_{<0}
=
\left\{
V\in\widehat{\mathcal X}_{\eta,M}(\Lambda):
\|w_V\|<1
\right\},
$$
and, for $\ell\ge0$, set
$$
\mathcal A_\ell
=
\left\{
V\in\widehat{\mathcal X}_{\eta,M}(\Lambda):
2^\ell\le\|w_V\|<2^{\ell+1}
\right\}.
$$
The set $\mathcal A_{<0}$ is finite, since the primitive Pl\"ucker
vectors of $\Lambda$-rational two-planes form a discrete subset of
$\wedge^2\Lambda$.

Let $V\in\mathcal A_\ell$. Since
$w_V\in\mathcal H_{\eta,M}$ and
$\eta\le2^{-M}\eta_0$, we have
$$
\begin{aligned}
\min\{\|w_V-\pi_cw_V\|,\|w_V-\pi_rw_V\|\}
&\le
\eta\|w_V\|^{-M} \le
\eta 2^{-\ell M} \le
\eta_0 2^{-(\ell+1)M}.
\end{aligned}
$$
Moreover, $\|w_V\|<2^{\ell+1}$. Hence
$$
\mathcal A_\ell
\subset
\mathcal Q_\Lambda
\left(
2^{\ell+1},
\eta_0 2^{-(\ell+1)M}
\right),
$$
and \eqref{fs} gives
$$
\#\mathcal A_\ell\le4
\qquad
(\ell\ge0).
$$

We first estimate the tail. By
\Cref{lem:critical-K-average-d2}, for every $L\ge0$ and every $t\ge0$,
$$
\begin{aligned}
\int_K
\sum_{\ell\ge L}
\sum_{V\in\mathcal A_\ell}
\|a_tkw_V\|^{-1}\,dk
&\le
C
\sum_{\ell\ge L}
\sum_{V\in\mathcal A_\ell}
\|w_V\|^{-1} \le
4C\sum_{\ell\ge L}2^{-\ell}
\ll
2^{-L}.
\end{aligned}
$$
This estimate is uniform in $t$.

For fixed $L$, the collection
$
\mathcal A_{<0}
\cup
\bigcup_{0\le\ell<L}\mathcal A_\ell
$
is finite, and every plane in it is non-isotropic. Therefore
\Cref{lem:critical-K-average-d2} gives
$$
\lim_{t\to\infty}
\int_K\|a_tkw_V\|^{-1}\,dk
=
0
$$
for each such $V$. Letting first $t\to\infty$ and then $L\to\infty$
proves \eqref{eq:nonisotropic-quasinull-vanishing-d2}.
\end{proof}

For $f\in C_c(\M_2(\mathbb R)\smallsetminus\{0\})$, define the modified Siegel transform
$$
\widehat f_{\operatorname{ni}}(h;\Lambda)
=
\sum_{\substack{0\ne v\in\Lambda\\
v\notin V\text{ for every }V\in\mathcal I_\Lambda}}
f(hv).
$$
Thus $\widehat f_{\operatorname{ni}}$ is obtained from the ordinary Siegel
transform by removing the lattice vectors contained in the exact rational
isotropic planes.

\begin{lem}[Modified Lipschitz principle]
\label{lem:siegel-height-d2}
Let $f\in C_c(\M_2(\mathbb R)\smallsetminus\{0\})$. There is a constant $C_f>0$ such that,
for every $h\in H$,
\begin{equation}\label{eq:exact-transform-domination-d2}
|\widehat f_{\operatorname{ni}}(h;\Lambda)|
\le
C_f\widehat\alpha_{\eta,M}(h;\Lambda)
+
C_f
\sum_{V\in\widehat{\mathcal X}_{\eta,M}(\Lambda)}
\|hw_V\|^{-1}.
\end{equation}
\end{lem}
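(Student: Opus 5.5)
The plan is to adapt the standard Lipschitz principle of Eskin--Margulis--Mozes, keeping track of which rational subspaces are used. Fix $f$ with $\supp f\subset\{v:\|v\|\le r\}$ and put $\Delta=h\Lambda$. For each $1\le i\le 4$, let $\Delta_i$ denote the span of the lattice vectors in $\Delta$ with $\|v\|\le r$. The support condition gives
\[
|\widehat f_{\operatorname{ni}}(h;\Lambda)|\le\|f\|_\infty\,\#\{v\in\Delta\cap B_r\smallsetminus\textstyle\bigcup_{V\in\mathcal I_\Lambda}hV\}.
\]
So it suffices to count lattice points of $\Delta$ in $B_r$ that avoid the images of the exact isotropic planes. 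We use successive minima: let $\lambda_1\le\dots\le\lambda_4$ be the successive minima of $\Delta$, let $j$ be the largest index with $\lambda_j\le r$, and let $U=\operatorname{span}$ of the first $j$ minima vectors. This $U$ is a $\Delta$-rational subspace of dimension $j$. A standard counting bound gives
\[
\#(\Delta\cap B_r)\ll_r \max\{1,\|\mathsf w_{\Delta,U}\|^{-1}\}.
\]

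Now split according to $j$.

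\begin{itemize}
\item If $j\in\{0,4\}$, the count is $O_r(1)$, since $\covol(\Delta)=1$.
\item If $j\in\{1,3\}$, write $U=hU_0$ with $U_0$ a $\Lambda$-rational subspace. Then $\|\mathsf w_{\Delta,U}\|^{-1}\le\alpha_j(h\Lambda)\le\widehat\alpha_{\eta,M}(h;\Lambda)$, because the degree-$1$ and degree-$3$ terms of $\widehat\alpha_{\eta,M}$ agree with those of $\alpha$.
\item If $j=2$, write $U=hV$ with $V$ a $\Lambda$-rational two-plane, and split into three cases.
  \begin{itemize}
  \item If $w_V\notin\mathcal H_{\eta,M}$, then $\|hw_V\|^{-1}\le\widehat\alpha_{\eta,M}(h;\Lambda)$.
  \item If $V\in\widehat{\mathcal X}_{\eta,M}(\Lambda)$, the bound is the term $\|hw_V\|^{-1}$ appearing in the sum.
  \item If $V\in\mathcal I_\Lambda$, we may not use $\|hw_V\|^{-1}$. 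Instead, every lattice point of $\Delta\cap B_r$ lies in $U=hV$, because $j=2$ means that all points of $\Delta\cap B_r$ lie in the span of the first two minima. These points are excluded from the modified transform, so the modified count is zero.
  \end{itemize}
\end{itemize}

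The main obstacle is the middle range, where the first two minima are small but $\lambda_3$ is only moderately above $r$. Choosing $j$ as the largest index with $\lambda_j\le r$ handles this cleanly, because it guarantees that every point of $\Delta\cap B_r$ lies in $U$. The one point to verify is the inclusion $\Delta\cap B_r\subset U$. This holds because any lattice vector of norm at most $r$ that lies outside $U$ would force $\lambda_{j+1}\le r$, contradicting the choice of $j$.

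The counting bound $\#(\Delta\cap U\cap B_r)\ll_r \|\mathsf w_{\Delta,U}\|^{-1}+1$ follows from Minkowski's second theorem applied inside $U$. Combining the cases gives \eqref{eq:exact-transform-domination-d2} with $C_f\ll\|f\|_\infty$ times a constant depending on $r$, using that $\widehat\alpha_{\eta,M}\ge1$ absorbs the $O_r(1)$ terms.
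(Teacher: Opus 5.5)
Your proof is correct and is essentially the argument the paper invokes when it appeals to the usual height (Lipschitz) argument behind its cited modified Lipschitz principle. Taking $j$ maximal with $\lambda_j\le r$ puts every short vector of $h\Lambda$ in the single rational subspace $U$, and your treatment of the case $U=hV$ with $V\in\mathcal I_\Lambda$, where the modified sum vanishes, makes explicit a step the paper's sketch leaves implicit. Since your bound is a pure counting estimate, it also avoids the paper's smoothing of $f$. One minor attribution point: the bound $\#(\Delta\cap U\cap B_r)\ll_r\|\mathsf w_{\Delta,U}\|^{-1}$ is not Minkowski's second theorem alone. It combines that theorem with the standard lattice-point count $\prod_i\max(1,r/\lambda_i)$.
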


\begin{proof}
This is the modified Lipschitz principle; compare
\cite[Lemma~2.7]{Kim}. Although the cited lemma is stated for smooth
functions, the continuous version follows by dominating $|f|$ by a function
in $C_c^\infty(\M_2(\mathbb R)\smallsetminus\{0\})$. The usual height
argument controls the contributions
from rational subspaces in degrees $1$ and $3$ and from non-quasi-null
rational two-planes. The remaining degree-$2$ contributions are precisely
those indexed by
$\widehat{\mathcal X}_{\eta,M}(\Lambda)$, which give the second term in
\eqref{eq:exact-transform-domination-d2}.
\end{proof}

We also need a subcritical moment of the ordinary height. Minkowski's
second theorem gives
$$
\alpha_2(\Delta)^{1/2}
\ll
\max\{1,\alpha_1(\Delta)\}.
$$
It follows that
$$
\alpha(h\Lambda)^{1/2}
\ll
\widehat\alpha_{\eta,M}(h;\Lambda).
$$
Consequently, if $\theta'>0$ is as in \Cref{thm:moment-d2}, then
\begin{equation}\label{eq:ordinary-subcritical-K-d2}
\sup_{t\ge0}
\int_K
\alpha(a_tk\Lambda)^{(1+\theta')/2}\,dk
<
\infty.
\end{equation}

We shall use the following equidistribution theorem of Shah.

\begin{theorem}\cite{shah}
\label{shah}
Suppose that $H\Lambda$ is dense in $X$. Then, for every $F\in C_c(X)$,
$$
\lim_{t\to\infty}
\int_KF(a_tk\Lambda)\,dk
=
\int_XF\,dm_X.
$$
\end{theorem}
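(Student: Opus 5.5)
Shah's theorem is a cited result, so the real task is to indicate how it follows from the standard linearization and Ratner machinery, specialized to $H=\SL_2(\mathbb R)\otimes\SL_2(\mathbb R)<G=\SL_4(\mathbb R)$ and $K=\SO(2)\times\SO(2)$. I would take the following route. First, put $\mu_t$ for the pushforward of $dk$ under $k\mapsto a_tk\Lambda$. By the parabolic chart decomposition already used in \Cref{lem:K-from-N-d2}, it suffices to show that for each chart element $\gamma$ the translates $a_t B_N(R_0)\gamma\Lambda$ equidistribute. Here $N$ is the expanding horospherical subgroup of $H$ for $a_t$, and $H\gamma\Lambda=H\Lambda$ is dense. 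So I reduce to the statement that $\nu_t:=(a_t)_*(\text{normalized }dn|_{B_N(R_0)})$ applied to $x=\gamma\Lambda$ converges weak-$*$ to $m_X$.

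Second, I establish non-escape of mass. By \Cref{lem:randomwalkgeneral} and the subcritical ordinary-height moment \eqref{eq:ordinary-height-small-moment} (or directly Kleinbock--Margulis quantitative nondivergence, \Cref{lem:d2-prescribed-qnd}), the maps $n\mapsto a_tn x$ are $(C,\tau)$-good. The obstruction alternative would force a rational subspace whose Pl\"ucker vector is fixed by $H$, and hence $H$-invariant. The only $H$-invariant proper subspaces of $\M_2(\mathbb R)$ or its exterior powers defined over $\mathbb Q$ would make $Hx$ non-dense, via the same linearization as \Cref{thm:orbit-dichotomy-d2}. So every weak-$*$ limit $\mu$ is a probability measure.

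Third, I show invariance and apply Ratner. A standard argument shows that $\mu$ is invariant under $N$: smooth $F$, and use that $a_t n' a_{-t}$ shrinks for $n'$ in the contracting direction while the $N$-average is translation-almost-invariant. Then $\mu$ is invariant under the unipotent subgroup $N$, which generates $H$ together with the opposite horospherical subgroup. The key claim is that $\mu$ is in fact $H$-invariant, and Ratner's measure classification applies to it. For this step I would use the Dani--Margulis linearization. The ergodic components of $\mu$ are homogeneous measures on closed orbits $Lg\Gamma$ with $L\supset N$. If a positive proportion lay on a tube $X(L,N)$ with $L\ne G$, then polynomial $(C,\alpha)$-good behavior of $n\mapsto a_tn\gamma g\cdot p_L$ in the representation $\wedge^{\dim L}\mathfrak g$ would trap $a_tB_N\gamma\Lambda$ near a singular set. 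Combined with \Cref{lem:d2-instability}-type expansion of nontrivial $H$-components, this forces $p_L$ to be $H$-fixed modulo $\Gamma$. That in turn puts $Hx$ inside a closed orbit of a proper subgroup, contradicting density.

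The main obstacle is this linearization step: classifying intermediate subgroups $H\subseteq L\subsetneq G$ that contain $N$ and admit closed orbits. Since $H$ is maximal among connected subgroups of $\SL_4$ up to its normalizer ($\SO(2,2)^\circ$), one has $L\in\{H,\ \SO(Q)\}$, which gives a closed orbit exactly in the determinant-rational case. With $\mu$ thus $H$-invariant and $H$-ergodic components all equal to $m_X$, I conclude $\mu=m_X$. Averaging over the finitely many charts then gives the $K$-statement.
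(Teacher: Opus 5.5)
Your overall architecture is the standard Dani--Margulis--Shah proof of the result the paper cites from \cite{shah}; the paper itself gives no proof. That architecture is: reduce the $K$-average to weighted $N$-averages via the charts, prove non-escape of mass, show that limits are $N$-invariant, then apply Ratner and linearization. The end of your third paragraph already contains the correct closing mechanism.

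The problem is your last paragraph, which you present as both the main obstacle and the actual conclusion. It is wrong and should be discarded.
\begin{itemize}
\item \emph{Wrong family of subgroups.} In the linearization, the subgroups $L$ to be handled are those in Ratner's countable class ($L\cap\Gamma$ a lattice in $L$) that contain a \emph{conjugate of $N$}. They are not overgroups of $H$, and they are not restricted to $H$ and $\SO(Q)$. For instance, $N$ is a $\mathbb Q$-subgroup with $N\cap\Gamma\cong\mathbb Z^2$, so $N\Gamma/\Gamma$ is a compact orbit and $L=N$ already occurs. Maximality of $\mathfrak{so}(2,2)$ in $\mathfrak{sl}_4$ is the input for the dichotomy \Cref{thm:orbit-dichotomy-d2}, not for equidistribution.
\item \emph{No $H$-invariance.} Nothing in your sketch makes the limit $\mu$ $H$-invariant; a priori it is only $N$-invariant.
\item \emph{$H$-ergodic components are not automatically $m_X$.} Even for an $H$-invariant measure, the $H$-ergodic components can be homogeneous measures on closed orbits $Hy$ with $y$ determinant-rational. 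The fact that $\Lambda$ itself is not determinant-rational does not prevent mass from accumulating near these countably many orbits.
\end{itemize}

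No classification is needed. Write $x=g\Gamma$ and let $p_L\in\wedge^{\dim L}\mathfrak{sl}_4(\mathbb R)$ be the wedge of a basis of $\operatorname{Lie}(L)$. Fix a proper $L$ in the class.
\begin{itemize}
\item Linearization together with \Cref{lem:d2-instability} (whose proof works in any finite-dimensional $H$-module) gives $\gamma\in\Gamma$ such that $g\gamma p_L$ is $H$-fixed.
\item Then $Hx\subset g\gamma\operatorname{Stab}_G(p_L)\Gamma/\Gamma$. This set is closed because $\Gamma p_L$ is discrete.
\item It is a proper subset of $X$, because $\SL_4(\mathbb R)$ is simple and $L\ne G$. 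This contradicts density of $Hx$.
\end{itemize}
Hence $\mu$ gives zero mass to every tube with $L\ne G$, and Ratner's theorem applied to the $N$-ergodic decomposition gives $\mu=m_X$ directly.

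Three smaller corrections.
\begin{itemize}
\item \emph{Non-escape of mass needs no density.} The $H$-summands of $\wedge^i\M_2(\mathbb R)$, $1\le i\le3$, are $\mathsf C\otimes\mathsf R$, $\mathcal M_c$, $\mathcal M_r$ and $\mathsf C^*\otimes\mathsf R^*$, all nontrivial. So \Cref{lem:d2-instability} gives $\sup_{n\in B_N(1)}\|a_tn\mathsf w\|\gg e^{\kappa t}\|\mathsf w\|$, and the Kleinbock--Margulis obstruction is impossible for large $t$ for every $\Lambda\in X$. No discussion of invariant rational subspaces is required.
\item \emph{Your first suggested route fails.} Going through \eqref{eq:ordinary-height-small-moment} does not give a uniform bound: iterating it only yields bounds growing like $e^{C\gamma t}$.
\item \emph{Source of $N$-invariance.} It comes from $a_{-t}n'a_t\to e$ for $n'\in N$. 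Thus $n'a_tnx=a_t(a_{-t}n'a_t)nx$ is a negligible shift of the parameter $n$. Conjugating the contracting direction by $a_t$, as you wrote, does not give it.
\end{itemize}
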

We now pass to the modified Siegel
transform.

\begin{prop}[Modified Siegel-transform limit]
\label{prop:modified-siegel-limit-d2}
Let $\Lambda<\M_2(\mathbb R)$ be Diophantine and not
determinant-rational. Then, for every
$f\in C_c(\M_2(\mathbb R)\smallsetminus\{0\})$,
\[
\lim_{t\to\infty}
\int_K
\widehat f_{\operatorname{ni}}(a_tk;\Lambda)\,dk
=
\frac{1}{\covol(\Lambda)}
\int_{\M_2(\mathbb R)}f(x)\,dx.
\]
\end{prop}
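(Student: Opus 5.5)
We first reduce to $\Lambda\in X$. Replacing $\Lambda$ by $\lambda\Lambda$ with $\lambda^4\covol(\Lambda)=1$ changes $f$ to $f(\lambda^{-1}\cdot)$ and rescales both sides consistently. We may also assume $f\ge0$ by splitting into positive and negative parts.

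The plan is a standard truncation argument. Write $\widehat f=\widehat f_{\operatorname{ni}}+\widehat f_{\mathrm{iso}}$, where $\widehat f_{\mathrm{iso}}(h;\Lambda)$ sums $f(hv)$ over the nonzero $v$ lying in some $V\in\mathcal I_\Lambda$. For $R>0$, choose a continuous cutoff $\chi_R$ on $X$ with $0\le\chi_R\le1$, equal to $1$ on $\{\alpha\le R\}$ and supported in $\{\alpha\le2R\}$. Then $F_R:=\chi_R\widehat f$ lies in $C_c(X)$. By \Cref{thm:orbit-dichotomy-d2}, $H\Lambda$ is dense, so Shah's theorem \Cref{shah} gives
\[
\int_KF_R(a_tk\Lambda)\,dk\to\int_XF_R\,dm_X .
\]
Siegel's mean-value formula gives $\int_X\widehat f\,dm_X=\int f$, so $\int_XF_R\,dm_X\to\int f$ as $R\to\infty$ by monotone convergence.

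It then remains to bound two errors, uniformly in large $t$ and small as $R\to\infty$. The first is
\[
\int_K(1-\chi_R)(a_tk\Lambda)\,\widehat f_{\operatorname{ni}}(a_tk;\Lambda)\,dk .
\]
The second is
\[
\int_K\chi_R(a_tk\Lambda)\,\widehat f_{\mathrm{iso}}(a_tk;\Lambda)\,dk .
\]
For the first error we apply \Cref{lem:siegel-height-d2}. The term with $C_f\widehat\alpha_{\eta,M}$ restricted to $\{\alpha(a_tk\Lambda)>R\}$ is handled by Hölder's inequality, using \Cref{thm:moment-d2} together with the Chebyshev bound from \eqref{eq:ordinary-subcritical-K-d2}. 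This gives
\[
\int_K\widehat\alpha_{\eta,M}\mathbf 1_{\{\alpha>R\}}
\le\Bigl(\int_K\widehat\alpha_{\eta,M}^{1+\theta'}\Bigr)^{1/(1+\theta')}
\Bigl(\int_K\mathbf 1_{\{\alpha>R\}}\Bigr)^{\theta'/(1+\theta')}
\ll R^{-c}
\]
for some $c>0$, uniformly in $t$. The quasi-null sum is handled without truncation, since \Cref{prop:nonisotropic-quasinull-vanishing-d2} shows that its $K$-average tends to $0$.

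For the second error, the key point is that $\mathcal I_\Lambda$ is finite (\Cref{prop:finite-isotropic-planes-d2}). For each $V\in\mathcal I_\Lambda$, the lattice-point count gives
\[
\sum_{0\ne v\in\Lambda\cap V}f(a_tkv)\ll_f\alpha_1(a_tk(\Lambda\cap V))\,\|a_tkw_V\|^{-1}+\alpha_1(a_tk(\Lambda\cap V)).
\]
On $\{\alpha\le2R\}$ this is $\ll_R\|a_tkw_V\|^{-1}+1$.

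The remaining task is to show that $\int_K\|a_tkw_V\|^{-1}\,dk$ tends to $0$ for an exactly isotropic $w_V$. I expect this step to be the main obstacle, because \Cref{lem:critical-K-average-d2} gives decay only off $\mathcal M_c\cup\mathcal M_r$. For $w\in\mathcal M_c\cong\operatorname{Sym}^2$, however, only the first factor acts, so $\int_{\SO(2)}\|b_tkw\|^{-1}\,dk$ can be computed directly. The vector $w=z(s,t)$ is a pure square $u\otimes u$, so $\|b_tk\,u\otimes u\|=\|b_tku\|^2$, and
\[
\int_{\SO(2)}\|b_tku\|^{-2}\,dk\asymp\int_0^{2\pi}\frac{d\phi}{e^{-2t}\cos^2\phi+e^{2t}\sin^2\phi}=2\pi,
\]
which does \emph{not} decay. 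So the bound $\ll_R\|a_tkw_V\|^{-1}$ is too lossy, and I would instead use directly that $f$ is supported away from $0$ and that the vectors of $a_tk(\Lambda\cap V)$ counted on $\{\alpha\le2R\}$ are few. More precisely, the first minimum of $a_tk(\Lambda\cap V)$ is at least $R^{-1}$, while its covolume $\|a_tkw_V\|$ is typically of size about $e^{2t}$. A lattice with covolume of size $e^{2t}$ and first minimum at least $R^{-1}$ has no nonzero vectors in a fixed ball once the second minimum exceeds $\operatorname{supp}f$. The count is nonzero only when the first minimum is $O_f(1)$, which forces $a_tku$ to be short. This happens only for $k$ in a set of measure $\ll e^{-t}$, and there the count is $\ll_f 1$ per vector of $\Lambda\cap V$ in that direction, bounded using $R$. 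Hence the second error tends to $0$ for each fixed $R$. Combining the estimates, letting $t\to\infty$ and then $R\to\infty$ proves the proposition.
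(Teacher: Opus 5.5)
Your argument is correct, but it handles the vectors in the exact isotropic planes differently from the paper.

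The paper never estimates their contribution on the thick part of $X$. For the lower bound it replaces $f$ by some $0\le f_-\le f$ vanishing on a neighborhood of $\{\det=0\}$. Since $H$ preserves $\det$, the vectors of $V\in\mathcal I_\Lambda$ are invisible to $f_-$, so $\widehat{f_-}_{\operatorname{ni}}=\widehat{f_-}$. For the upper bound it just uses $\widehat f_{\operatorname{ni}}\le\widehat f$ and $\int_X\widehat f(1-h_R)\,dm_X\le\int f$.

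The cusp part of the argument is the same as yours: the moment bound, the Chebyshev/H\"older step, and the vanishing of the non-isotropic quasi-null planes. Your two-sided decomposition via $\chi_R\widehat f$ instead requires showing that $\int_K\chi_R(a_tk\Lambda)\,\widehat f_{\mathrm{iso}}(a_tk;\Lambda)\,dk\to0$ for each fixed $R$. This uses the finiteness of $\mathcal I_\Lambda$ and an extra computation that the paper's sandwich avoids. In return, it shows that the $T^2$-scale contribution of $\mathcal I_\Lambda$ lives entirely in the cusp.

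Your sketch of that extra step is imprecise: a hit in $\supp f$ does not force the column vector to be short. For a column-isotropic $V$, write $\Lambda\cap V=\{uy^{\intercal}:y\in L\}$ with $L<\mathbb R^2$ a lattice. Then $\|a_tk(uy^{\intercal})\|=\|b_tk_1u\|\,\|b_tk_2y\|$ for $k=(k_1,k_2)$. So a hit in the ball of radius $r$ means $\|b_tk_1u\|\,\lambda_1(b_tk_2L)\le r$, and this can also happen because $\lambda_1(b_tk_2L)$ is small.

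The fix is routine. The set of $k_1$ with $\|b_tk_1u\|\le Y$ has measure $\ll_u Ye^{-t}$. Hence the set of such $k$ has measure $\ll_u re^{-t}\int_{\SO(2)}\lambda_1(b_tk_2L)^{-1}\,dk_2\ll e^{-t}$, by the uniform-in-$t$ bound for this $\SL_2$ average. On $\{\alpha\le2R\}$ the count is $\ll_{f,R}1$ by packing, since $\lambda_1(a_tk\Lambda)\ge(2R)^{-1}$. Thus the second error is $O_{f,R}(e^{-t})$, and your proof closes; the row-isotropic case is symmetric.
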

\begin{proof} We may assume that $\Lambda\in X$ and $f\ge0$. Following
\cite[Section~7]{Kim}, we prove matching lower and upper bounds. Let
$\widehat f$ denote the ordinary Siegel transform, and put
\[
X_{>R}=\{\Delta\in X:\alpha(\Delta)>R\}.
\]
Choose a continuous function $h_R:X\to[0,1]$ such that
\[
\mathbf 1_{X_{>R+1}}\le h_R\le\mathbf 1_{X_{>R}}.
\]

For the lower bound, fix $\varepsilon>0$ and choose
$f_-\in C_c(\M_2(\mathbb R)\smallsetminus\{0\})$ such that
\[
0\le f_-\le f,\qquad
f_-=0\quad\text{on a neighborhood of }\{\det=0\},
\]
and
\[
\int_{\M_2(\mathbb R)}f_-(x)\,dx
\ge
\int_{\M_2(\mathbb R)}f(x)\,dx-\varepsilon.
\]
Such a function exists because $\{\det=0\}$ has Lebesgue measure zero.
Since every plane in $\mathcal I_\Lambda$ is contained in $\{\det=0\}$,
\[
\widehat{f_-}_{\operatorname{ni}}(h;\Lambda)
=
\widehat{f_-}(h\Lambda).
\]

By Siegel's mean-value formula \cite{siegel1945mean},
$\widehat{f_-}\in L^1(X)$. Taking $R$ sufficiently large, we may assume
that
\[
\int_X\widehat{f_-}(\Delta)h_R(\Delta)\,dm_X(\Delta)<\varepsilon.
\]
The function $\widehat{f_-}(1-h_R)$ is bounded, continuous, and compactly
supported on $X$. Since $H\Lambda$ is dense by
\Cref{thm:orbit-dichotomy-d2}, Shah's theorem and Siegel's mean-value
formula give
\[
\begin{aligned}
&\liminf_{t\to\infty}
\int_K\widehat f_{\operatorname{ni}}(a_tk;\Lambda)\,dk
\ge
\int_X\widehat{f_-}(\Delta)(1-h_R(\Delta))\,dm_X(\Delta)\\
&\ge
\int_{\M_2(\mathbb R)}f_-(x)\,dx-\varepsilon\ge 
\int_{\M_2(\mathbb R)}f(x)\,dx-2\varepsilon.
\end{aligned}
\]

For the upper bound, $\widehat f_{\operatorname{ni}}(h;\Lambda)\le
\widehat f(h\Lambda)$ pointwise. Applying Shah's theorem to
$\widehat f(1-h_R)$ gives
\[
\begin{aligned}
\limsup_{t\to\infty}
\int_K
\widehat f_{\operatorname{ni}}(a_tk;\Lambda)
(1-h_R(a_tk\Lambda))\,dk
&\le
\int_X\widehat f(\Delta)(1-h_R(\Delta))\,dm_X(\Delta)\\
&\le
\int_{\M_2(\mathbb R)}f(x)\,dx.
\end{aligned}
\]

It remains to estimate the cusp contribution. If
$h_R(a_tk\Lambda)\ne0$, then $\alpha(a_tk\Lambda)>R$, and hence
\[
\begin{aligned}
&\left(
\widehat\alpha_{\eta,M}(a_tk;\Lambda)
+
\alpha(a_tk\Lambda)^{1/2}
\right)h_R(a_tk\Lambda) \le
R^{-\theta'/2}
\left(
\widehat\alpha_{\eta,M}(a_tk;\Lambda)
+
\alpha(a_tk\Lambda)^{1/2}
\right)^{1+\theta'}.
\end{aligned}
\]
By \Cref{thm:moment-d2},
\eqref{eq:ordinary-subcritical-K-d2}, and
\eqref{eq:exact-transform-domination-d2},
\[
\begin{aligned}
&\limsup_{t\to\infty}
\int_K
\widehat f_{\operatorname{ni}}(a_tk;\Lambda)
h_R(a_tk\Lambda)\,dk\\
&\qquad\ll_f
R^{-\theta'/2}
+
\limsup_{t\to\infty}
\int_K
\sum_{V\in\widehat{\mathcal X}_{\eta,M}(\Lambda)}
\|a_tkw_V\|^{-1}\,dk
\ll_f R^{-\theta'/2},
\end{aligned}
\]
where \Cref{prop:nonisotropic-quasinull-vanishing-d2} shows that the
remaining limit is zero. Therefore
\[
\limsup_{t\to\infty}
\int_K\widehat f_{\operatorname{ni}}(a_tk;\Lambda)\,dk
\le
\int_{\M_2(\mathbb R)}f(x)\,dx+O_f(R^{-\theta'/2}).
\]
Letting $R\to\infty$ and $\varepsilon\to0$ proves the claim.
\end{proof}
\section{Counting and completion}
\label{sec:counting-completion}
We derive the determinant-counting asymptotic from the modified
Siegel-transform limit. A fiber kernel converts compactly supported
functions on $\M_2(\mathbb R)$ into functions of the largest singular
value and the determinant, yielding a counting limit on compact parameter
ranges. We then remove the range where $\sigma_1/T$ is small and count the
finitely many rational isotropic planes separately.

Fix a lattice $\Lambda<\M_2(\mathbb R)$ satisfying the hypotheses of
\Cref{thm:main}, and put
$$
\mathcal R(a,b;T)
=
\{X\in\M_2(\mathbb R):\|X\|<T,\ a<\det X<b\},
$$
and
$$
\Lambda_{\operatorname{ni}}
=
\{0\ne v\in\Lambda:
v\notin V\text{ for every }V\in\mathcal I_\Lambda\}.
$$

\subsection{The fiber kernel}

Let
$$
\M_2^+(\mathbb R)
=
\{X=(x_{ij})\in\M_2(\mathbb R):x_{11}>0\}.
$$
Extending by zero, we regard $C_c(\M_2^+(\mathbb R))$ as a subspace of
$C_c(\M_2(\mathbb R)\smallsetminus\{0\})$. For $r>0$, define
$$
\Theta(r,x,y,\zeta)
=
\begin{pmatrix}
r&y\\
x&(\zeta+xy)/r
\end{pmatrix}.
$$
For $f\in C_c(\M_2^+(\mathbb R))$, define
$J_f\in C_c((0,\infty)\times\mathbb R)$ by
\begin{equation}\label{eq:def-Jf-d2}
J_f(r,\zeta)
=
r^{-2}
\int_{\mathbb R^2}
f(\Theta(r,x,y,\zeta))\,dx\,dy.
\end{equation}

\begin{lem}
\label{lem:Jf-basic-d2}
For every $f\in C_c(\M_2^+(\mathbb R))$,
\begin{equation}\label{eq:Jf-volume-d2}
\int_{\M_2^+(\mathbb R)}f(x)\,dx
=
\int_{\mathbb R}\int_0^\infty
J_f(r,\zeta)\,r\,dr\,d\zeta.
\end{equation}
Conversely, for every $h\in C_c((0,\infty)\times\mathbb R)$, there is
$f\in C_c(\M_2^+(\mathbb R))$ such that $J_f=h$. If $h\ge0$, then $f$
may be chosen nonnegative.
\end{lem}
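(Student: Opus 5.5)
The plan is to treat $\Theta$ as a global coordinate system on $\M_2^+(\mathbb R)$ and reduce both assertions to a change of variables. Throughout, Lebesgue measure on $\M_2(\mathbb R)$ induced by the trace inner product is ordinary coordinate Lebesgue measure in the entries $(x_{11},x_{12},x_{21},x_{22})$, because $E_{11},E_{12},E_{21},E_{22}$ form an orthonormal basis.

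\emph{Step 1: $\Theta$ is a diffeomorphism.} I will check that $\Theta:(0,\infty)\times\mathbb R^3\to\M_2^+(\mathbb R)$ is a diffeomorphism. Given $X=(x_{ij})$ with $x_{11}>0$, the unique preimage is
$$r=x_{11},\quad x=x_{21},\quad y=x_{12},\quad \zeta=x_{11}x_{22}-x_{12}x_{21}=\det X.$$
Both $\Theta$ and this inverse are smooth. In the variable order $(r,y,x,\zeta)$, the entries are $x_{11}=r$, $x_{12}=y$, $x_{21}=x$ and $x_{22}=(\zeta+xy)/r$. The Jacobian matrix is therefore lower triangular with diagonal $(1,1,1,1/r)$, so $|\det D\Theta|=r^{-1}$.

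\emph{Step 2: the volume identity \eqref{eq:Jf-volume-d2}.} By the change of variables formula and Fubini,
$$\int_{\M_2^+(\mathbb R)} f\,dX=\int f(\Theta(r,x,y,\zeta))\,r^{-1}\,dr\,dx\,dy\,d\zeta=\int_{\mathbb R}\int_0^\infty J_f(r,\zeta)\,r^2\cdot r^{-1}\,dr\,d\zeta .$$
This is exactly \eqref{eq:Jf-volume-d2}.

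\emph{Step 3: $J_f\in C_c((0,\infty)\times\mathbb R)$.} Since $\supp f$ is a compact subset of $\{x_{11}>0\}$, its preimage under $\Theta$ is compact. Hence there is a compact set of $(r,x,y,\zeta)$, with $r$ bounded away from $0$ and $\infty$ and $x,y,\zeta$ bounded, outside of which $f\circ\Theta$ vanishes. It follows that $J_f$ has compact support in $(0,\infty)\times\mathbb R$. Continuity of $J_f$ follows from the uniform continuity of $f\circ\Theta$ on compact sets, since the integration in $(x,y)$ runs over a fixed bounded set, together with the continuity of the factor $r^{-2}$ away from $r=0$.

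\emph{Step 4: the converse.} Given $h\in C_c((0,\infty)\times\mathbb R)$, fix $\psi\in C_c(\mathbb R^2)$ with $\psi\ge0$ and $\int\psi=1$. Define $f$ on $\M_2^+(\mathbb R)$ by
$$f(\Theta(r,x,y,\zeta))=r^2h(r,\zeta)\psi(x,y),$$
which makes sense because $\Theta$ is a diffeomorphism. Then $f$ is continuous, and its support is the image under the homeomorphism $\Theta$ of the compact set $\supp h\times\supp\psi$ (in the appropriate variable order), so $f\in C_c(\M_2^+(\mathbb R))$. Directly from \eqref{eq:def-Jf-d2},
$$J_f(r,\zeta)=r^{-2}\cdot r^2h(r,\zeta)\int\psi=h(r,\zeta).$$
If $h\ge0$, then $f\ge0$.

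There is no real obstacle here. The only point needing care is the Jacobian computation in Step 1, which pins down the weight $r\,dr\,d\zeta$ in \eqref{eq:Jf-volume-d2} and explains the normalization $r^{-2}$ in \eqref{eq:def-Jf-d2}.
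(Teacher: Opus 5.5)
Your proposal is correct and follows essentially the same route as the paper. The paper also uses the coordinates $(r,x,y,\zeta)$, with $dz=r^{-1}\,d\zeta$, together with Fubini's theorem to prove the volume identity, and it uses the same construction $f(\Theta(r,x,y,\zeta))=r^2h(r,\zeta)\psi(x,y)$ for the converse; your explicit Jacobian computation and your check that $J_f\in C_c((0,\infty)\times\mathbb R)$ just fill in details the paper leaves implicit.
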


\begin{proof}
Write
$$
v=
\begin{pmatrix}
r&y\\
x&z
\end{pmatrix},
\qquad
\zeta=rz-xy.
$$
Since $dz=r^{-1}d\zeta$, formula \eqref{eq:Jf-volume-d2} follows from
Fubini's theorem.

Conversely, choose a nonnegative function
$\psi\in C_c(\mathbb R^2)$ with integral one, and define
$$
f(\Theta(r,x,y,\zeta))
=
r^2h(r,\zeta)\psi(x,y).
$$
The coordinates $(r,x,y,\zeta)$ are unique on
$\M_2^+(\mathbb R)$, and the support of $h$ is bounded away from $r=0$.
It follows that $f$ is compactly supported and $J_f=h$.
\end{proof}

For $T=e^{2t}$, $r>0$, and $|\zeta|\le T^2r^2$, put
$$
D_T(r,\zeta)
=
\operatorname{diag}\left(Tr,\frac{\zeta}{Tr}\right),
\qquad
J_{f,T}(r,\zeta)
=
T^2\int_Kf(a_tkD_T(r,\zeta))\,dk.
$$
Set $J_{f,T}(r,\zeta)=0$ when $|\zeta|>T^2r^2$.

\begin{lem}[Orbital kernel]
\label{lem:orbital-kernel-d2}
For every $f\in C_c(\M_2^+(\mathbb R))$,
\[
J_{f,T}\to \frac{1}{2\pi^2}J_f
\]
uniformly on compact subsets of $(0,\infty)\times\mathbb R$ as
$T\to\infty$. Moreover, for all sufficiently large $T$, the supports of
$J_{f,T}$ lie in a fixed compact subset of
$(0,\infty)\times\mathbb R$.
\end{lem}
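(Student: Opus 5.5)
The plan is a direct computation in angular coordinates on $K$. The key facts are that the determinant is $H$-invariant and that the conjugated matrix $a_tkD_T(r,\zeta)$ is of the form $\Theta(r',x,y,\zeta)$ with $r'\approx r$. Write $k=(k_\varphi,k_\psi)$ with $k_\varphi$ the rotation by $\varphi$, so that $dk=(4\pi^2)^{-1}d\varphi\,d\psi$. Put $c=\cos\varphi$, $s=\sin\varphi$, $c'=\cos\psi$, $s'=\sin\psi$.

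\textbf{The four entries.} Since $b_t$ multiplies the first row and column by $e^{-t}$ and the second by $e^{t}$, and $T=e^{2t}$, the entries of $a_tkD_T(r,\zeta)=b_tk_\varphi D_T k_\psi^{\intercal}b_t$ are
\[
\begin{aligned}
X_{11}&=rcc'+\frac{\zeta ss'}{T^2r}, &\qquad X_{12}&=Trcs'-\frac{\zeta sc'}{Tr},\\
X_{21}&=Trsc'-\frac{\zeta cs'}{Tr}, &\qquad X_{22}&=T^2rss'+\frac{\zeta cc'}{r}.
\end{aligned}
\]
Moreover $\det X=\zeta$. Hence $X=\Theta(X_{11},X_{21},X_{12},\zeta)$ whenever $X_{11}>0$, and
\[
f(a_tkD_T)=f\bigl(\Theta(X_{11},X_{21},X_{12},\zeta)\bigr).
\]

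\textbf{Support claim.} Let $\supp f\subset\{r_0\le x_{11}\le r_1,\ |x_{ij}|\le A\}$. On the region $|\zeta|\le T^2r^2$ we have $|X_{11}|\le r|cc'|+r|ss'|\le 2r$, so $f(a_tkD_T)\ne0$ forces $r\ge r_0/2$. Also $|\zeta|=|\det X|\le 2A^2$.

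Given this lower bound on $r$ and bounded $\zeta$, the conditions $|X_{12}|,|X_{21}|\le A$ force $|s'|\ll 1/(Tr)$ and $|s|\ll 1/(Tr)$, after one bootstraps using the smallness of $\zeta/(Tr)$. Hence $|cc'|=1+O(T^{-2})$, and $X_{11}=\pm r+O(T^{-2})$. Thus $r\le 2r_1$ for large $T$, which gives the fixed compact support $[r_0/2,2r_1]\times[-2A^2,2A^2]$.

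\textbf{Main term.} For $(r,\zeta)$ in a compact set and $T$ large, the integrand is supported in two small neighbourhoods: $\varphi,\psi$ both near $0$, or both near $\pi$. Both sheets require $cc'>0$, and since $(-1,-1)$ acts trivially, the two sheets contribute equally.

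On the sheet near $(0,0)$, change variables from $(\varphi,\psi)$ to
\[
(x,y)=(X_{21},X_{12})=\bigl(Tr\sin\varphi+O(T^{-1}),\,Tr\sin\psi+O(T^{-1})\bigr).
\]
This is a diffeomorphism onto a region containing $\{|x|,|y|\le A\}$. Its Jacobian is $T^2r^2\bigl(1+O(T^{-2})\bigr)$, uniformly for $(r,\zeta)$ in compacts; the derivative corrections are of size $\zeta/(Tr)$ times trigonometric derivatives, and $\cos$ is $1+O(T^{-2})$ on the support. Also $X_{11}=r+O(T^{-2})$ uniformly. Therefore
\[
T^2\int_K f(a_tkD_T)\,dk=2\cdot\frac{T^2}{4\pi^2}\int_{\mathbb R^2} f\bigl(\Theta(r+O(T^{-2}),x,y,\zeta)\bigr)\,\frac{dx\,dy}{T^2r^2\,(1+O(T^{-2}))}.
\]
Uniform continuity of $f$ and the bounded $(x,y)$-domain then give the limit
\[
\frac{1}{2\pi^2}\,r^{-2}\int_{\mathbb R^2} f(\Theta(r,x,y,\zeta))\,dx\,dy=\frac{1}{2\pi^2}\,J_f(r,\zeta),
\]
uniformly on compacts.

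\textbf{Main obstacle.} The delicate step is making the localisation and the change of variables uniform. One must show that, on the support, $|\sin\varphi|$ and $|\sin\psi|$ are $O(1/(Tr))$ simultaneously, despite the coupling terms $\zeta sc'/(Tr)$ and $\zeta cs'/(Tr)$ in $X_{12}$ and $X_{21}$. I would first solve the $2\times2$ linear system for $(s,s')$ in terms of $(X_{21},X_{12})$. Its matrix is $Tr\cdot\mathrm{id}+O(\zeta/(Tr))$, which is invertible with inverse of norm $\ll 1/(Tr)$ once $T^2r^2\gg|\zeta|$; this also provides the injectivity needed for the change of variables.
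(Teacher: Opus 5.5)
Your overall route is the paper's: the same entry formulas, $\det X=\zeta$, exclusion of the sheets near $(0,\pi)$ and $(\pi,0)$ by the sign of $X_{11}$, the two surviving sheets identified via the trivially acting $(-I,-I)$, and a Jacobian of size $T^2r^2$. Using the exact coordinates $(X_{21},X_{12})$ instead of the paper's linearized $(Tr\vartheta,Tr\varphi)$ is a harmless variation.

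The gap is in the localization, which you rightly single out as the delicate step but then resolve incorrectly. The bounds $|X_{12}|,|X_{21}|\le A$ only give $|cs'|,|sc'|\ll (Tr)^{-1}$. This is also satisfied on a second cluster where $|c|,|c'|\ll (Tr)^{-1}$; for instance, at $\varphi=\psi=\pi/2$ one has $X_{12}=X_{21}=0$ exactly. Your proposed remedy cannot see this cluster. As a linear system in $(s,s')$, the coefficient matrix is
\[
\begin{pmatrix} Trc' & -\zeta c/(Tr)\\ -\zeta c'/(Tr) & Trc\end{pmatrix}
=Tr\operatorname{diag}(c',c)+O\bigl(\zeta/(Tr)\bigr),
\]
not $Tr\cdot\mathrm{id}+O(\zeta/(Tr))$. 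Its determinant $cc'\bigl(T^2r^2-\zeta^2/(T^2r^2)\bigr)$ vanishes exactly on that cluster. Treating the matrix as $Tr\cdot\mathrm{id}$ presupposes $c,c'\approx1$, i.e.\ the very localization being proved. Because your bound $r\le 2r_1$ is derived from this localization, the support claim inherits the gap.

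The missing input is the $(2,2)$ entry, which is how the paper localizes. The bound $|X_{22}|\le A$ gives $T^2r|ss'|\le A+|\zeta|/r$. Hence $|ss'|\ll (T^2r)^{-1}$ once $r\ge r_0/2$ and $|\zeta|\le 2A^2$. Then $s'^2=(cs')^2+(ss')^2\ll (Tr)^{-2}$, and likewise for $s$. Alternatively, on the bad cluster $X_{11}=rcc'+\zeta ss'/(T^2r)=O\bigl((T^2r)^{-1}\bigr)<r_0$ for large $T$. With this added, the rest of your argument goes through, including $r\le 2r_1$ from $X_{11}=r+O(T^{-2})$.

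For comparison, the paper bounds $r$ from above without any localization. It uses $\|a_tkD_T(r,\zeta)\|\gg T^{-1}\|D_T(r,\zeta)\|_{\operatorname{Euc}}\ge r$, since $T^{-1}$ is the smallest singular value of $a_t$ on $\M_2(\mathbb R)$.
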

\begin{proof}
Write elements of $\SO(2)$ as
$
k_\vartheta
=
\begin{pmatrix}
\cos\vartheta&-\sin\vartheta\\
\sin\vartheta&\cos\vartheta
\end{pmatrix}.
$
A direct calculation gives
$$
\begin{aligned}
\bigl(a_t(k_\vartheta,k_\varphi)D_T(r,\zeta)\bigr)_{11}
&=
r\cos\vartheta\cos\varphi
+
\frac{\zeta}{T^2r}\sin\vartheta\sin\varphi,\\
\bigl(a_t(k_\vartheta,k_\varphi)D_T(r,\zeta)\bigr)_{12}
&=
Tr\cos\vartheta\sin\varphi
-
\frac{\zeta}{Tr}\sin\vartheta\cos\varphi,\\
\bigl(a_t(k_\vartheta,k_\varphi)D_T(r,\zeta)\bigr)_{21}
&=
Tr\sin\vartheta\cos\varphi
-
\frac{\zeta}{Tr}\cos\vartheta\sin\varphi,\\
\bigl(a_t(k_\vartheta,k_\varphi)D_T(r,\zeta)\bigr)_{22}
&=
T^2r\sin\vartheta\sin\varphi
+
\frac{\zeta}{r}\cos\vartheta\cos\varphi.
\end{aligned}
$$

Fix a compact subset of $(0,\infty)\times\mathbb R$. If the displayed
matrix belongs to $\supp f$, boundedness of its second and third entries
gives
$$
|\cos\vartheta\sin\varphi|
+
|\sin\vartheta\cos\varphi|
\ll T^{-1},
$$
while boundedness of the fourth entry gives
$|\sin\vartheta\sin\varphi|\ll T^{-2}$. Thus each angle lies within
$O(T^{-1})$ of $0$ or $\pi$. Only neighborhoods of
$(0,0)$, $(\pi,\pi)$, $(0,\pi)$, and $(\pi,0)$ can contribute. The last
two are excluded for large $T$, since the upper-left entry is then
negative, whereas $\supp f$ is compactly contained in $\{x_{11}>0\}$.

Near $(0,0)$, put
$$
x=Tr\vartheta,
\qquad
y=Tr\varphi.
$$
The support condition bounds $x$ and $y$. Uniformly for bounded $x,y$ and
for $(r,\zeta)$ in the fixed compact set,
$$
a_t(k_\vartheta,k_\varphi)D_T(r,\zeta)
=
\Theta(r,x,y,\zeta)+O(T^{-1}).
$$
The same expansion holds near $(\pi,\pi)$ after changing the signs of
$x$ and $y$. Moreover,
$$
T^2\frac{d\vartheta\,d\varphi}{(2\pi)^2}
=
\frac{1}{(2\pi)^2}r^{-2}\,dx\,dy.
$$
The two contributing neighborhoods therefore give
$
J_{f,T}(r,\zeta)
\to
\frac{1}{2\pi^2}J_f(r,\zeta)
$
uniformly on compact subsets.

It remains to prove the support assertion. The claim is immediate if
$f=0$, so assume otherwise. Since $\supp f$ is a compact subset of
$\M_2^+(\mathbb R)$, we have
$$
x_{11}\gg_f1,
\qquad
\|X\|\ll_f1
\qquad(X\in\supp f).
$$
If $J_{f,T}(r,\zeta)\ne0$, then $|\zeta|\le T^2r^2$, and the upper-left
entry in the displayed formula has absolute value at most $2r$. Hence
$r\gg_f1$.

By equivalence of norms and the smallest-singular-value bound for the
action of $a_tk$,
$$
\|a_tkD_T(r,\zeta)\|
\gg_{\|\cdot\|}
T^{-1}\|D_T(r,\zeta)\|_{\operatorname{Euc}}
\ge r,
$$
where $\|\cdot\|_{\operatorname{Euc}}$ is induced by the trace inner
product. Since $a_tkD_T(r,\zeta)\in\supp f$ for some $k\in K$, it follows
that $r\ll_f1$. Finally, $\zeta$ is the determinant of a matrix in
$\supp f$, so $|\zeta|\ll_f1$. This proves the support assertion.
\end{proof}

\subsection{The parameter-space limit}

Let $\sigma_1(v)\ge\sigma_2(v)\ge0$ denote the singular values of $v\in \M_2(\mathbb R)$.

\begin{prop}
\label{prop:parameter-limit-d2}
For every $h\in C_c((0,\infty)\times\mathbb R)$, we have
\begin{equation}\label{eq:parameter-limit-d2}
\lim_{T\to\infty}
T^{-2}
\sum_{v\in\Lambda_{\operatorname{ni}}}
h\left(T^{-1}\sigma_1(v),\det v\right)
=
\frac{2\pi^2}{\covol(\Lambda)}
\int_{\mathbb R}\int_0^\infty
h(r,\zeta)\,r\,dr\,d\zeta.
\end{equation}
The corresponding Lebesgue-measure limit is
$$
\lim_{T\to\infty}
T^{-2}
\int_{\M_2(\mathbb R)}
h\left(T^{-1}\sigma_1(x),\det x\right)\,dx
=
2\pi^2
\int_{\mathbb R}\int_0^\infty
h(r,\zeta)\,r\,dr\,d\zeta.
$$
\end{prop}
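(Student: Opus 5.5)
Assume $\Lambda\in X$, so $\covol(\Lambda)=1$; the general case follows by rescaling. By \Cref{lem:Jf-basic-d2}, write $h=J_f$ with $f\in C_c(\M_2^+(\mathbb R))$, taking $f\ge0$ when $h\ge0$. Linearity then reduces the statement to nonnegative $h$. The plan has three steps.

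\textbf{Step 1: rewrite the $K$-average as a sum over singular-value data.} Fix $v\in\Lambda_{\operatorname{ni}}$ and take a Cartan decomposition $v=k_1 D k_2^{\intercal}$ with $D=\operatorname{diag}(\sigma_1(v),\pm\sigma_2(v))$. The sign is arranged so that $\det D=\det v$, at the cost of $k_1,k_2\in\mathrm O(2)$; since the $\SO(2)$ factors act transitively up to this sign flip, the parameterization goes through. By $K$-invariance of the Haar measure,
\[
\int_K f(a_tkv)\,dk=\int_K f(a_tkD)\,dk.
\]
Now set $r=T^{-1}\sigma_1(v)$ and $\zeta=\det v$. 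Then $D=D_T(r,\zeta)$, because $\zeta/(Tr)=\det v/\sigma_1(v)=\pm\sigma_2(v)$. The constraint $|\zeta|\le T^2r^2$ holds automatically, since $|\det v|\le\sigma_1(v)^2$. This gives the identity
\[
T^2\int_K\widehat f_{\operatorname{ni}}(a_tk;\Lambda)\,dk=\sum_{v\in\Lambda_{\operatorname{ni}}}J_{f,T}\bigl(T^{-1}\sigma_1(v),\det v\bigr).
\]
Some care is needed with the orientation, $\mathrm O(2)$ versus $\SO(2)$: if $\det k_1=-1$, the reflection $\operatorname{diag}(1,-1)$ must be absorbed on both sides. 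I expect this to be harmless.

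\textbf{Step 2: replace $J_{f,T}$ by $J_f/(2\pi^2)$.} By \Cref{prop:modified-siegel-limit-d2}, the left side of the identity divided by $T^2$ tends to $\int f=\int\!\!\int J_f\,r\,dr\,d\zeta$, using \eqref{eq:Jf-volume-d2}. By \Cref{lem:orbital-kernel-d2}, for large $T$ both $J_{f,T}$ and $J_f$ are supported in a fixed compact set $\mathcal K\subset(0,\infty)\times\mathbb R$, and $\sup_{\mathcal K}|J_{f,T}-J_f/(2\pi^2)|=o(1)$. The difference of the two sums is therefore at most
\[
o(1)\cdot\#\{v\in\Lambda_{\operatorname{ni}}:(T^{-1}\sigma_1(v),\det v)\in\mathcal K\}.
\]
It remains to show that this count is $O(T^2)$. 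For this, choose a nonnegative $h_0\in C_c$ with $h_0\ge1$ on $\mathcal K$, and realize it as $h_0=J_{f_0}$ with $f_0\ge0$. Apply the identity of Step 1 to $f_0$, together with the lower bound $J_{f_0,T}\ge\frac{1}{4\pi^2}$ on $\mathcal K$ for large $T$, which follows from uniform convergence. \Cref{prop:modified-siegel-limit-d2} for $f_0$ then bounds the count by $O(T^2)$. Combining,
\[
T^{-2}\sum_{v}h\bigl(T^{-1}\sigma_1(v),\det v\bigr)\to 2\pi^2\int\!\!\int h\,r\,dr\,d\zeta.
\]
This is the first claim.

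\textbf{Step 3: the Lebesgue version.} Repeat the same computation with $\Lambda$ replaced by Lebesgue measure. Concretely, $T^2\int_K\int f(a_tkx)\,dx\,dk=T^2\int f$, because the Haar measure on $H$ preserves Lebesgue measure. Express $\int h(T^{-1}\sigma_1(x),\det x)\,dx$ through the Cartan-coordinate integral formula in the form $\int J_{f,T}\,d\mu$. The same uniform-convergence and compact-support argument then yields the limit, with the bounded-count step replaced by a volume bound. Equivalently, the Lebesgue statement is the case of the counting argument in which the Siegel limit is exact.

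The main obstacle is Step 1. It needs a careful check that the Cartan decomposition produces exactly $D_T(r,\zeta)$, including the sign of the second singular value and the $\mathrm O(2)$ issue. It also needs a check that the normalizations, namely the probability measure $dk$ and the factor $T^2$, are consistent with the constant $2\pi^2$ from \Cref{lem:orbital-kernel-d2}.
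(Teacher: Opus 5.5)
Your proposal is correct and follows the paper's proof essentially step for step. The steps are: the singular-value identity $T^{-2}J_{f,T}(T^{-1}\sigma_1(v),\det v)=\int_K f(a_tkv)\,dk$; the modified Siegel-transform limit; the auxiliary $h_0=J_{f_0}$ with $J_{f_0,T}\ge 1/(4\pi^2)$ on a fixed compact set, which gives the $O(T^2)$ count; and integration of the same identity over $\M_2(\mathbb R)$ for the Lebesgue version.

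The orientation issue is indeed harmless. Right-multiplying an orthogonal factor of determinant $-1$ by $\operatorname{diag}(1,-1)$ either moves the sign onto the second diagonal entry, or cancels if both factors need it. Your reductions to $\covol(\Lambda)=1$ and $h\ge0$ are unnecessary, since the Siegel limit already carries $1/\covol(\Lambda)$ and is linear in $f$.
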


\begin{proof}
 By \Cref{lem:Jf-basic-d2}, choose
$f\in C_c(\M_2^+(\mathbb R))$ such that $J_f=h$. Singular-value decomposition and the
bi-invariance of $dk$ give
$$
T^{-2}
J_{f,T}\left(T^{-1}\sigma_1(v),\det v\right)
=
\int_Kf(a_tkv)\,dk.
$$
After summing over $v\in\Lambda_{\operatorname{ni}}$,
\Cref{prop:modified-siegel-limit-d2} gives
$$
\lim_{T\to\infty}
T^{-2}
\sum_{v\in\Lambda_{\operatorname{ni}}}
J_{f,T}\left(T^{-1}\sigma_1(v),\det v\right)
=
\frac{1}{\covol(\Lambda)}
\int_{\M_2(\mathbb R)}f(x)\,dx.
$$

Let $\mathcal C\subset(0,\infty)\times\mathbb R$ be a compact set
containing $\supp J_f$ and the supports of $J_{f,T}$ for all sufficiently
large $T$, as supplied by \Cref{lem:orbital-kernel-d2}. Choose
$0\le h_0\in C_c((0,\infty)\times\mathbb R)$ such that $h_0\ge1$ on
$\mathcal C$, and choose $f_0\ge0$ with $J_{f_0}=h_0$. Uniform convergence
gives
$$
J_{f_0,T}\ge\frac{1}{4\pi^2}
\qquad\text{on }\mathcal C
$$
for all sufficiently large $T$. Applying the preceding limit to $f_0$
therefore gives
$$
T^{-2}
\#\left\{
v\in\Lambda_{\operatorname{ni}}:
\left(T^{-1}\sigma_1(v),\det v\right)\in\mathcal C
\right\}
\ll1.
$$
Consequently,
$$
\begin{aligned}
&T^{-2}
\left|
\sum_{v\in\Lambda_{\operatorname{ni}}}
\left(J_{f,T}-\frac{1}{2\pi^2}J_f\right)
\left(T^{-1}\sigma_1(v),\det v\right)
\right|\\
&\qquad\le
\sup_{\mathcal C}|J_{f,T}-\frac{1}{2\pi^2}J_f|\,
T^{-2}
\#\left\{
v\in\Lambda_{\operatorname{ni}}:
\left(T^{-1}\sigma_1(v),\det v\right)\in\mathcal C
\right\}
=o(1).
\end{aligned}
$$
The preceding estimate and \eqref{eq:Jf-volume-d2} prove
\eqref{eq:parameter-limit-d2}.

For Lebesgue measure, integrate the exact orbital identity over
$\M_2(\mathbb R)$:
$$
\begin{aligned}
&\tfrac{1}{T^2}
\int_{\M_2(\mathbb R)}
J_{f,T}\left(T^{-1}\sigma_1(x),\det x\right)\,dx=
\int_K\int_{\M_2(\mathbb R)}
f(a_tkx)\,dx\,dk
=
\int_{\M_2(\mathbb R)}f(x)\,dx.
\end{aligned}
$$
The same uniform-convergence argument proves the Lebesgue-measure limit.
\end{proof}

\subsection{The volume asymptotic}

\begin{prop}[Volume asymptotic]
\label{prop:volume-d2}
For every $a<b$,
$$
\vol\left(\mathcal R(a,b;T)\right)
\sim
C_{\|\cdot\|}(b-a)T^2,
$$
where
$
C_{\|\cdot\|}
=
2\pi^2
\int_{\{r>0:\|\operatorname{diag}(r,0)\|<1\}}
r\,dr.
$
\end{prop}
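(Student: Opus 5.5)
The plan is to apply the Lebesgue-measure limit in \Cref{prop:parameter-limit-d2} to smooth approximations of the indicator of the region. The norm is $\SO(2)$-bi-invariant, so $\|X\|$ depends only on the singular values $(\sigma_1,\sigma_2)$ of $X$. On the relevant range, $\sigma_2=|\det X|/\sigma_1$, and this is $O(T^{-1})$ once $\sigma_1\asymp T$ and $|\det X|\le\max\{|a|,|b|\}$. Hence, by continuity and homogeneity of the norm,
$$\|X\|/T=\|\operatorname{diag}(\sigma_1/T,0)\|+O(T^{-2}\cdot\sigma_1^{-1}\cdots)=\|\operatorname{diag}(T^{-1}\sigma_1,0)\|+o(1),$$
uniformly for $T^{-1}\sigma_1$ in a compact subset of $(0,\infty)$ and $\det X\in[a,b]$. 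Consequently, up to a boundary layer, the condition $X\in\mathcal R(a,b;T)$ reads $h(T^{-1}\sigma_1(X),\det X)=1$ with $h=\mathbf 1_{\{\|\operatorname{diag}(r,0)\|<1\}}(r)\mathbf 1_{(a,b)}(\zeta)$. The function $r\mapsto\|\operatorname{diag}(r,0)\|=r\|\operatorname{diag}(1,0)\|$ is linear, so the set $\{r>0:\|\operatorname{diag}(r,0)\|<1\}$ is an interval $(0,r_0)$, and its boundary in $(r,\zeta)$ has measure zero.

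The first step is the compact-range limit. Fix $\epsilon>0$. I would choose $h_\pm\in C_c((0,\infty)\times\mathbb R)$ squeezing the indicator of $(\epsilon,r_0)\times(a,b)$, with slight enlargement or shrinkage in $r$ to absorb the $o(1)$ discrepancy between $\|X\|/T$ and $\|\operatorname{diag}(T^{-1}\sigma_1,0)\|$. Applying \Cref{prop:parameter-limit-d2} gives limits $2\pi^2\int\!\!\int h_\pm\,r\,dr\,d\zeta$. These converge to $2\pi^2(b-a)\int_\epsilon^{r_0}r\,dr$ as the approximations sharpen.

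The second step, which I expect to be the main obstacle, is to show that the region $\sigma_1\le\epsilon T$ contributes $O(\epsilon^2T^2)$ uniformly in large $T$; this is the dyadic-shell estimate. For $2^{-j-1}T<\sigma_1\le2^{-j}T$, I would apply \Cref{prop:parameter-limit-d2} with $T$ replaced by $T_j=2^{-j}T$ to a fixed bump $h_0\ge\mathbf 1_{[1/2,1]\times[a,b]}$. This bounds the shell volume by $\ll T_j^2$ for all $T_j\ge T_*$. The remaining part, $\sigma_1\le T_*$, is a fixed bounded set, since $|\det|\le b$ and $\sigma_2\le\sigma_1$; its volume is $O(1)$. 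Summing the shells gives $\ll\sum_{2^{-j}\le\epsilon}4^{-j}T^2+O(1)\ll\epsilon^2T^2+O(1)$. Alternatively, one can compute this volume directly: in SVD coordinates, $dX\asymp(\sigma_1^2-\sigma_2^2)\,d\sigma_1\,d\sigma_2\,dk$, and substituting $\zeta=\sigma_1\sigma_2$ gives $\vol\asymp\int_0^{\epsilon T}\sigma_1\,d\sigma_1\,(b-a)+O(1)$. This direct computation may be cleaner and also confirms the constant.

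Combining the two steps and letting $\epsilon\to0$ yields $\vol(\mathcal R(a,b;T))\sim 2\pi^2(b-a)T^2\int_0^{r_0}r\,dr$, which is $C_{\|\cdot\|}(b-a)T^2$ as stated.
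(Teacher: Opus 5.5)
Your proof is correct, but it takes a different route from the paper.

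\textbf{The paper's route.} The paper computes the volume directly. It uses the Weyl integration formula in signed singular-value coordinates, $dx=2\pi^2(r^2-s^2)\,dr\,ds\,dk_1\,dk_2$, then substitutes $u=rs$ and rescales $r=T\rho$. This turns $T^{-2}\vol\mathcal R(a,b;T)$ into $2\pi^2\int_a^b\int(\rho-u^2T^{-4}\rho^{-3})\,d\rho\,du$ over a $\rho$-range bounded uniformly in $T$, and dominated convergence finishes. In other words, your ``alternative'' remark, promoted to the whole proof.

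\textbf{Your route.} You run the lattice-counting machinery with Lebesgue measure in place of $\Lambda_{\operatorname{ni}}$:
\begin{enumerate}[label=(\roman*)]
\item the Lebesgue version of \Cref{prop:parameter-limit-d2} on compact $\sigma_1/T$-windows, which is exactly the volume half of \Cref{lem:compact-window-counting-d2};
\item a dyadic-shell bound for $\sigma_1\le\epsilon T$, which is the volume analogue of \Cref{lem:dyadic-shell-bound-d2,lem:small-sigma-tail-d2}.
\end{enumerate}
This is not circular, because \Cref{prop:parameter-limit-d2} rests only on \Cref{lem:Jf-basic-d2,lem:orbital-kernel-d2}.

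\textbf{What each buys.} In your version, the constant $2\pi^2$ is inherited from the local computation in the orbital kernel lemma, so you never need the global Weyl Jacobian. The volume and lattice counts are also handled by identical arguments, which makes it transparent that the same $C_{\|\cdot\|}$ governs both. The paper's version is self-contained and elementary, independent of the fiber-kernel apparatus. It also gives the volume half of \Cref{lem:small-sigma-tail-d2} for free (the paper cites the proof of this proposition for it). In your version that tail must be proved separately by the dyadic sum.

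\textbf{A harmless imprecision.} Since $K=\SO(2)\times\SO(2)$ rather than $\mathrm{O}(2)\times\mathrm{O}(2)$, $\|X\|$ is not a function of $(\sigma_1,\sigma_2)$ alone. It depends on the signed singular values: $\|X\|=\|\operatorname{diag}(\sigma_1,\operatorname{sgn}(\det X)\,\sigma_2)\|$. This does not affect your argument, because on your windows $\sigma_2/T=|\det X|/(T\sigma_1)=O(T^{-2})$.
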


\begin{proof}
The Weyl integration formula in signed singular-value coordinates, with
probability Haar measure on each copy of $\SO(2)$, is
$$
dx
=
2\pi^2(r^2-s^2)\,dr\,ds\,dk_1\,dk_2,
\qquad
r>0,\quad -r<s<r.
$$
In these coordinates, $\det x=rs$. Put $u=rs$, so that
$s=u/r$. Since the norm is $K$-invariant,
$$
\vol\mathcal R(a,b;T)
=
2\pi^2
\int_a^b
\int_{\substack{r>|u|^{1/2}\\
\|\operatorname{diag}(r,r^{-1}u)\|<T}}
\left(r-r^{-3}{u^2}\right)\,dr\,du.
$$

Set $r=T\rho$. After dividing by $T^2$, the integrand becomes
$
\rho-\frac{u^2}{T^4\rho^3},
$
which is nonnegative and bounded above by $\rho$. The
range of $\rho$ is contained in a fixed bounded interval, independently of
$T$ and $u\in[a,b]$. Moreover, for every $\rho>0$ and almost every $u$,
$$
\left\|
\operatorname{diag}
\left(\rho,({T^2\rho})^{-1} u\right)
\right\|
\to
\|\operatorname{diag}(\rho,0)\|.
$$
Dominated convergence therefore proves the claim.
\end{proof}

\subsection{Compact windows and removal of tails}

For a compact interval $J\subset(0,\infty)$, set
$$
\mathcal R_J(a,b;T)
=
\left\{
v\in\mathcal R(a,b;T):
T^{-1}{\sigma_1(v)}\in J
\right\}.
$$

\begin{lem}[Compact-window count]
\label{lem:compact-window-counting-d2}
For every compact interval $J\subset(0,\infty)$,
$$
\#\bigl(\Lambda_{\operatorname{ni}}\cap\mathcal R_J(a,b;T)\bigr)
=
\frac{1}{\covol(\Lambda)}
\vol\mathcal R_J(a,b;T)
+
o(T^2).
$$
\end{lem}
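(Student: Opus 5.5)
The plan is to sandwich the indicator of the compact window between continuous compactly supported functions of the parameters $(T^{-1}\sigma_1(v),\det v)$, and then apply \Cref{prop:parameter-limit-d2} to both the lattice sum and the Lebesgue integral. Since $\|\cdot\|$ is invariant under $\SO(2)\times\SO(2)$, the norm of $v$ depends only on its singular values. For $v=k_1\operatorname{diag}(\sigma_1,\det v/\sigma_1)k_2$ we have $\|v\|=\|\operatorname{diag}(\sigma_1,\det v/\sigma_1)\|$, up to the sign convention for the second singular value, which the invariance absorbs. Hence
$$
\|v\|/T=\bigl\|\operatorname{diag}\bigl(T^{-1}\sigma_1,\ \det v/(T\sigma_1)\bigr)\bigr\|.
$$
When $T^{-1}\sigma_1\in J$ and $\det v\in[a-1,b+1]$, the second entry is $O(T^{-2})$. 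So on the window $J$, the condition $\|v\|<T$ agrees, up to an error that vanishes as $T\to\infty$, with the condition $\|\operatorname{diag}(\rho,0)\|<1$ for $\rho=T^{-1}\sigma_1$. Write $\chi(\rho,\zeta)=\mathbf 1_J(\rho)\mathbf 1_{\{\|\operatorname{diag}(\rho,0)\|<1\}}\mathbf 1_{(a,b)}(\zeta)$. This is the indicator of a set $S\subset(0,\infty)\times\mathbb R$ whose boundary has measure zero for the measure $r\,dr\,d\zeta$: the set $\{\rho:\|\operatorname{diag}(\rho,0)\|<1\}$ is an interval $(0,\rho_0)$ by homogeneity, so the boundary is contained in finitely many lines.

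Fix $\epsilon>0$ and choose $h_\pm\in C_c((0,\infty)\times\mathbb R)$ with $0\le h_-\le\chi\le h_+$ and $\iint(h_+-h_-)\,r\,dr\,d\zeta<\epsilon$. The supports should lie in $J'\times[a-1,b+1]$ for a slightly larger compact $J'\subset(0,\infty)$. We need a little room so that the $O(T^{-2})$ perturbation of the norm condition is absorbed. Concretely, take $h_+\ge1$ on a small neighbourhood of $\overline S$, and take $h_-$ supported in a compact subset of the interior of $S$. Then for $T$ large, every $v$ with $(T^{-1}\sigma_1(v),\det v)$ in $\supp h_-$ lies in $\mathcal R_J(a,b;T)$. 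Conversely, every $v\in\mathcal R_J(a,b;T)$ satisfies $h_+(T^{-1}\sigma_1(v),\det v)\ge1$, because $\|v\|<T$ forces $\|\operatorname{diag}(\rho,0)\|<1+O(T^{-2})$. Hence
$$
\sum_{v\in\Lambda_{\operatorname{ni}}}h_-\le\#(\Lambda_{\operatorname{ni}}\cap\mathcal R_J(a,b;T))\le\sum_{v\in\Lambda_{\operatorname{ni}}}h_+,
$$
and the same inequalities hold for $\vol\mathcal R_J(a,b;T)$ with integrals in place of sums.

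Applying \eqref{eq:parameter-limit-d2} and its Lebesgue analogue to $h_\pm$, both $T^{-2}\#(\cdots)$ and $T^{-2}\covol(\Lambda)^{-1}\vol\mathcal R_J(a,b;T)$ have $\limsup$ and $\liminf$ within $2\pi^2\covol(\Lambda)^{-1}\epsilon$ of the common value $2\pi^2\covol(\Lambda)^{-1}\iint\chi\,r\,dr\,d\zeta$. Letting $\epsilon\to0$ gives the claimed $o(T^2)$ statement.

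The main obstacle is constructing $h_\pm$ so that the lower and upper comparisons hold uniformly for large $T$ despite the norm perturbation. This needs two things: that $\partial S$ is null, which follows from the homogeneity of the norm; and that the perturbation term $\det v/(T\sigma_1)=O(T^{-2})$ is uniform, which holds because $\rho$ is bounded away from $0$ on $J'$ and $\det v$ ranges over a bounded set. Everything else is a direct application of \Cref{prop:parameter-limit-d2}.
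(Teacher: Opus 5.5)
Your proposal is correct and is essentially the paper's argument. The paper sandwiches $\mathcal R_J(a,b;T)$ between the sets where $T^{-1}\sigma_1\in J_\delta^-=\{r\in J:\|\operatorname{diag}(r,0)\|\le1-\delta\}$, resp.\ $J_\delta^+=\{r\in J:\|\operatorname{diag}(r,0)\|<1+\delta\}$, and then approximates these indicators by continuous functions to apply \Cref{prop:parameter-limit-d2} to both the lattice sum and the volume; you merge the two approximations into the single pair $h_\pm$. One small repair: require $h_+\ge1$ on a neighbourhood of $\{(\rho,\zeta):\rho\in J,\ \|\operatorname{diag}(\rho,0)\|\le1,\ a\le\zeta\le b\}$ rather than of $\overline S$, because when $\min J=\rho_0$ the set $S$ is empty while $\mathcal R_J(a,b;T)$ need not be (an $\SO(2)\times\SO(2)$-invariant norm can satisfy $\|\operatorname{diag}(\rho,\epsilon)\|<\|\operatorname{diag}(\rho,0)\|$ for small $\epsilon$ of one sign), whereas the paper's open window $J_\delta^+$ handles this automatically.
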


\begin{proof}
Put
$
g(r)=\|\operatorname{diag}(r,0)\|.
$
For $\delta>0$, let
$$
J_\delta^-
=
\{r\in J:g(r)\le1-\delta\},
\qquad
J_\delta^+
=
\{r\in J:g(r)<1+\delta\}.
$$
Uniformly for $u\in[a,b]$ and $r\in J$,
$$
\left\|
\operatorname{diag}
\left(r, ({T^2r})^{-1}{u}\right)
\right\|
\to
g(r).
$$
Hence, for all sufficiently large $T$,
$$
\begin{aligned}
\left\{
v:
T^{-1}{\sigma_1(v)}\in J_\delta^-,
\ a<\det v<b
\right\}
&\subset
\mathcal R_J(a,b;T)\\
&\subset
\left\{
v: T^{-1} {\sigma_1(v)}\in J_\delta^+,
\ a<\det v<b
\right\}.
\end{aligned}
$$
By the usual approximation from above and below,
\Cref{prop:parameter-limit-d2} and its Lebesgue-measure version apply to
the indicators of $J_\delta^-\times(a,b)$ and
$J_\delta^+\times(a,b)$. Since
$
g(r)=r\|\operatorname{diag}(1,0)\|,
$
we have
$
\int_{J_\delta^+\smallsetminus J_\delta^-}r\,dr
\to0
$
as $\delta\to0$. Letting $\delta\to0$ proves the result.
\end{proof}

\begin{lem}[Uniform dyadic-shell bound]
\label{lem:dyadic-shell-bound-d2}
For every bounded interval $I\subset\mathbb R$, there is a constant
$C_I>0$ such that, for every $S\ge1$,
$$
\#\left\{
v\in\Lambda_{\operatorname{ni}}:
{S}/{2}<\sigma_1(v)\le S,\ \det v\in I
\right\}
\le
C_IS^2.
$$
\end{lem}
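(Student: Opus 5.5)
The dyadic-shell bound will follow from \Cref{prop:parameter-limit-d2} applied to a single fixed test function, together with a compactness argument for bounded $S$. Fix a bounded interval $I$ and choose a nonnegative $h\in C_c((0,\infty)\times\mathbb R)$ with $h\ge1$ on $[1/2,1]\times\overline I$. Such an $h$ exists because $[1/2,1]\times\overline I$ is a compact subset of $(0,\infty)\times\mathbb R$.

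For $v$ in the shell, we have $S^{-1}\sigma_1(v)\in(1/2,1]$ and $\det v\in I$. Hence the cardinality in question is at most
$$
\sum_{v\in\Lambda_{\operatorname{ni}}}h\bigl(S^{-1}\sigma_1(v),\det v\bigr).
$$
By \Cref{prop:parameter-limit-d2} with $T=S$, $S^{-2}$ times this sum converges, as $S\to\infty$, to the finite constant
$$
\frac{2\pi^2}{\covol(\Lambda)}\int_{\mathbb R}\int_0^\infty h(r,\zeta)\,r\,dr\,d\zeta .
$$
In particular, there is $S_0$ such that the sum is at most $(\text{limit}+1)S^2$ for $S\ge S_0$.

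For $1\le S\le S_0$, every vector in the shell satisfies $\sigma_1(v)\le S_0$. The Euclidean norm is $\sqrt{\sigma_1^2+\sigma_2^2}\le\sqrt2\,\sigma_1$, so these vectors lie in a fixed Euclidean ball, and such a ball contains finitely many lattice points. Their number is therefore bounded by a constant, which is at most a constant times $S^2$ since $S\ge1$. Taking $C_I$ as the larger of the two constants proves the bound for all $S\ge1$.

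There is no real obstacle; the only point to check is that \Cref{prop:parameter-limit-d2} is applied along the continuous parameter $T=S$. This is fine, because the proposition is stated as a limit $T\to\infty$ over real $T$ (with $T=e^{2t}$). The hypotheses on $\Lambda$ (Diophantine and not determinant-rational) enter only through that proposition.
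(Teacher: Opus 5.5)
Your proposal is correct and follows essentially the same route as the paper. Both arguments dominate the shell by a single fixed nonnegative $h\in C_c((0,\infty)\times\mathbb R)$ that is at least $1$ on $[1/2,1]\times I$, apply \Cref{prop:parameter-limit-d2} to get a uniform $O(S^2)$ bound for large $S$, and absorb the bounded range of $S$ into the constant using finiteness of lattice points in a bounded set.
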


\begin{proof}
Choose a nonnegative function
$h\in C_c((0,\infty)\times\mathbb R)$ supported in
$[1/3,2]\times I'$, where $I'$ is a compact interval containing $I$, and
such that $h=1$ on $[1/2,1]\times I$. By
\Cref{prop:parameter-limit-d2},
$$
S^{-2}
\sum_{v\in\Lambda_{\operatorname{ni}}}
h\left(S^{-1}{\sigma_1(v)},\det v\right)
\ll_h1
$$
for all sufficiently large $S$. The remaining bounded range of $S$ is
absorbed into the constant, since the corresponding set of lattice points
is finite.
\end{proof}

\begin{lem}[Removing the small-$\sigma_1$ tail]
\label{lem:small-sigma-tail-d2}
For every $a<b$,
$$
\lim_{\delta\to0}
\limsup_{T\to\infty}
T^{-2}
\vol\left\{
v\in\mathcal R(a,b;T):
\sigma_1(v)<\delta T
\right\}
=
0;
$$

$$
\lim_{\delta\to0}
\limsup_{T\to\infty}
T^{-2}
\#\left\{
v\in\Lambda_{\operatorname{ni}}\cap\mathcal R(a,b;T):
\sigma_1(v)<\delta T
\right\}
=
0.
$$
\end{lem}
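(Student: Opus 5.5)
For both limits we split the region $\{\sigma_1(v)<\delta T\}$ into dyadic shells in $\sigma_1$, and we use two facts: the determinant is bounded, and the norm is $\SO(2)$-bi-invariant.

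\emph{Volume.} Take $v\in\mathcal R(a,b;T)$ with $\sigma_1(v)<\delta T$. We split into two parts. In the first part, $\sigma_1(v)\le A:=\max\{1,(2\max\{|a|,|b|\})^{1/2}\}$. This set is bounded, so its volume is $O(1)$, which is $o(T^2)$. In the second part, $A<\sigma_1<\delta T$. We compute in the signed singular-value coordinates of \Cref{prop:volume-d2}. The density is $2\pi^2(r^2-s^2)\,dr\,ds\le 2\pi^2 r\,dr\,du$ with $u=rs\in(a,b)$, so the volume is
\[
\le 2\pi^2(b-a)\int_0^{\delta T}r\,dr=\pi^2(b-a)\delta^2T^2.
\]
Dividing by $T^2$, then letting $T\to\infty$ and $\delta\to0$, proves the first assertion. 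Alternatively, the constraint $\|v\|<T$ can simply be dropped here, which gives the same bound.

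\emph{Lattice count.} We drop the condition $\|v\|<T$, since that only enlarges the set. Put $I=(a,b)$ and apply \Cref{lem:dyadic-shell-bound-d2} with $S=2^{-j}\delta T$ for $j\ge0$, as long as $S\ge1$. Summing over shells,
\[
\#\{v\in\Lambda_{\operatorname{ni}}:1\le\sigma_1(v)<\delta T,\ \det v\in I\}\le C_I\sum_{j\ge0}(2^{-j}\delta T)^2\le \tfrac43C_I\delta^2T^2.
\]
The points with $\sigma_1(v)<1$ form a finite set, independent of $T$, and so contribute $O(1)$. Hence the $\limsup$ of $T^{-2}$ times the count is $\ll_I\delta^2$, which tends to $0$ as $\delta\to0$.

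The one point needing care is that \Cref{lem:dyadic-shell-bound-d2} must hold uniformly in $S\ge1$, including the bounded range of $S$. The lemma provides exactly this, because the constant $C_I$ absorbs finitely many lattice points. The dyadic sum converges geometrically, so there is no logarithmic loss, and everything else is bookkeeping.
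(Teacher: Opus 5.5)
Your proposal is correct and follows the paper's argument. The lattice count is exactly the paper's dyadic-shell summation via \Cref{lem:dyadic-shell-bound-d2}, with the finitely many points satisfying $\sigma_1(v)<1$ set aside, and your bound $\pi^2(b-a)\delta^2T^2$ is the explicit form of the signed singular-value computation from \Cref{prop:volume-d2} that the paper invokes (your preliminary split at $\sigma_1\le A$ is unnecessary, since the integral over $0<r<\delta T$ already covers it).
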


\begin{proof}
The volume estimate follows from the proof of
\Cref{prop:volume-d2}. For the lattice estimate, decompose $(0,\delta T]$ into the dyadic shells
$$
2^{-j-1}\delta T
<
\sigma_1
\le
2^{-j}\delta T.
$$
As long as $2^{-j}\delta T\ge1$,
\Cref{lem:dyadic-shell-bound-d2} bounds the number of points in this shell
by
$
C(2^{-j}\delta T)^2.
$
The remaining points satisfy $\sigma_1(v)<1$ and form a fixed finite set.
Summing the geometric series gives
$
O(\delta^2T^2)+O(1),
$
uniformly in $T$.
\end{proof}

\subsection{The nonsingular count}
\label{sec:nonsingular-d2}

We first count the lattice points outside the exact rational isotropic
planes, allowing determinant zero. We then show that the remaining
zero-determinant points are negligible.

\begin{prop}
\label{prop:nonisotropic-window-d2}
For every $a<b$,
$$
\#\left\{
v\in\Lambda_{\operatorname{ni}}:
\|v\|<T,\ a<\det v<b
\right\}
\sim
\frac{C_{\|\cdot\|}}{\covol(\Lambda)}
(b-a)T^2.
$$
\end{prop}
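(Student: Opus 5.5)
The plan is to split the region $\mathcal R(a,b;T)$ according to the size of $\sigma_1(v)/T$. Since the norm is bi-$\SO(2)$-invariant, $\|v\|=\|\operatorname{diag}(\sigma_1(v),\pm\sigma_2(v))\|\ge c\,\sigma_1(v)$ for a constant $c>0$ depending only on the norm. Hence every $v\in\mathcal R(a,b;T)$ satisfies $\sigma_1(v)\le C_0T$ for a fixed $C_0$. For small $\delta>0$ we take the compact interval $J_\delta=[\delta,C_0]\subset(0,\infty)$ and write
$$
\#\{v\in\Lambda_{\operatorname{ni}}:\|v\|<T,\ a<\det v<b\}
=
\#\bigl(\Lambda_{\operatorname{ni}}\cap\mathcal R_{J_\delta}(a,b;T)\bigr)
+
\#\{v\in\Lambda_{\operatorname{ni}}\cap\mathcal R(a,b;T):\sigma_1(v)<\delta T\}.
$$
The volume of $\mathcal R(a,b;T)$ splits in the same way.

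For the first term we invoke \Cref{lem:compact-window-counting-d2}, which gives $\covol(\Lambda)^{-1}\vol\mathcal R_{J_\delta}(a,b;T)+o(T^2)$ for each fixed $\delta$. We then write $\vol\mathcal R_{J_\delta}(a,b;T)=\vol\mathcal R(a,b;T)-\vol\{v\in\mathcal R(a,b;T):\sigma_1(v)<\delta T\}$ and apply \Cref{prop:volume-d2} to the full volume. This yields, for every fixed $\delta$,
$$
\limsup_{T\to\infty}\left|T^{-2}\#\bigl(\Lambda_{\operatorname{ni}}\cap\mathcal R(a,b;T)\bigr)-\frac{C_{\|\cdot\|}}{\covol(\Lambda)}(b-a)\right|
\le
\limsup_{T\to\infty}T^{-2}\bigl(\mathrm{Vol}_\delta(T)+\mathrm{Cnt}_\delta(T)\bigr),
$$
where $\mathrm{Vol}_\delta(T)$ and $\mathrm{Cnt}_\delta(T)$ denote the volume and the lattice count of the small-$\sigma_1$ parts, respectively. \Cref{lem:small-sigma-tail-d2} shows that both of these quantities tend to zero as $\delta\to0$. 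Since the left-hand side does not depend on $\delta$, letting $\delta\to0$ gives the claim.

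The only delicate point is to check that the hypotheses match. \Cref{lem:compact-window-counting-d2} requires $J$ to be compact in $(0,\infty)$, and this is guaranteed by the upper bound $\sigma_1\le C_0T$ together with the cutoff at $\delta$. The tail lemma must be uniform in $T$, and this uniformity is exactly how it is stated. The substantive inputs, namely the modified Siegel limit and the dyadic-shell bound, are already contained in those lemmas. Note that zero-determinant points of $\Lambda_{\operatorname{ni}}$ are included in this count; this is consistent with the proposition as stated, since these points are handled by \Cref{prop:parameter-limit-d2}, which applies to all of $\Lambda_{\operatorname{ni}}$.
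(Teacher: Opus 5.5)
Your proposal is correct and follows essentially the same route as the paper. The paper likewise bounds $\sigma_1\le R_{\|\cdot\|}\|v\|$, applies \Cref{lem:compact-window-counting-d2} on the window $[\delta,R_{\|\cdot\|}]$, removes the small-$\sigma_1$ part for both counts and volumes via \Cref{lem:small-sigma-tail-d2}, and concludes with \Cref{prop:volume-d2} by letting $T\to\infty$ and then $\delta\to0$.
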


\begin{proof}
Choose $R_{\|\cdot\|}>0$ such that
$\sigma_1(x)\le R_{\|\cdot\|}\|x\|$ for
$x\in\M_2(\mathbb R)$.
For fixed $\delta>0$, apply
\Cref{lem:compact-window-counting-d2} with
$J=[\delta,R_{\|\cdot\|}]$. The complement of this compact window is
negligible for both lattice points and volume by
\Cref{lem:small-sigma-tail-d2}. Letting first $T\to\infty$ and then
$\delta\to0$, and using \Cref{prop:volume-d2}, proves the assertion.
\end{proof}

\begin{lem}
\label{lem:nonisotropic-zero-negligible-d2}
One has
$$
\#\left\{
v\in\Lambda_{\operatorname{ni}}:
\|v\|<T,\ \det v=0
\right\}
=
o(T^2).
$$
\end{lem}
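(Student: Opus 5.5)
The plan is to deduce this directly from \Cref{prop:nonisotropic-window-d2}, using monotonicity in the determinant window. No separate analysis of the singular quadric $\{\det=0\}$ should be needed beyond what has already gone into that proposition.

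Fix $\varepsilon>0$. Every $v\in\Lambda_{\operatorname{ni}}$ with $\|v\|<T$ and $\det v=0$ satisfies $-\varepsilon<\det v<\varepsilon$. So the set we want to bound is contained in
$$
\left\{v\in\Lambda_{\operatorname{ni}}:\|v\|<T,\ -\varepsilon<\det v<\varepsilon\right\}.
$$
\Cref{prop:nonisotropic-window-d2} counts exactly the points of $\Lambda_{\operatorname{ni}}$ in such a window, with no condition $\det v\ne0$. Applying it with $a=-\varepsilon$ and $b=\varepsilon$ gives that this larger set has cardinality
$$
\frac{C_{\|\cdot\|}}{\covol(\Lambda)}\,2\varepsilon\,T^2+o(T^2).
$$
Consequently
$$
\limsup_{T\to\infty}T^{-2}\,\#\left\{v\in\Lambda_{\operatorname{ni}}:\|v\|<T,\ \det v=0\right\}\le\frac{2\varepsilon\,C_{\|\cdot\|}}{\covol(\Lambda)}.
$$
Since $\varepsilon>0$ is arbitrary, the $\limsup$ is zero, which is the claim.

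There is no real obstacle at this stage. The substantive input was placed earlier: the modified Siegel-transform limit (\Cref{prop:modified-siegel-limit-d2}) is applied to $\widehat f_{\operatorname{ni}}$, which removes only the vectors lying in the exact rational isotropic planes $\mathcal I_\Lambda$. As a result, \Cref{prop:nonisotropic-window-d2} is a genuine asymptotic for all of $\Lambda_{\operatorname{ni}}$, including its zero-determinant points. This is also why the lower-bound argument there needed approximants vanishing near $\{\det=0\}$, while the upper bound used $\widehat f$ itself.

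The one point to check is that \Cref{prop:nonisotropic-window-d2} holds for every real interval $(a,b)$, including one that straddles $0$. It does, because its proof uses only \Cref{lem:compact-window-counting-d2}, \Cref{lem:small-sigma-tail-d2}, and \Cref{prop:volume-d2}, none of which assume $0\notin(a,b)$.
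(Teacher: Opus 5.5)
Your proposal is correct and is the same argument as the paper's: apply \Cref{prop:nonisotropic-window-d2} to the window $(-\varepsilon,\varepsilon)$, bound the $\limsup$ of $T^{-2}$ times the count by $2C_{\|\cdot\|}\varepsilon/\covol(\Lambda)$, and let $\varepsilon\to0$. Your check that the proposition covers windows containing $0$ is also correct.
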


\begin{proof}
For every $\varepsilon>0$,
\Cref{prop:nonisotropic-window-d2}, applied to
$(-\varepsilon,\varepsilon)$, gives
$$
\begin{aligned}
\limsup_{T\to\infty}
T^{-2}
\#\left\{
v\in\Lambda_{\operatorname{ni}}:
\|v\|<T,\ |\det v|<\varepsilon
\right\}
\le
\frac{2C_{\|\cdot\|}}{\covol(\Lambda)}
\varepsilon.
\end{aligned}
$$
Letting $\varepsilon\to0$ proves the claim.
\end{proof}

\begin{prop}[Nonsingular asymptotic]
\label{prop:nonsingular-d2}
For every $a<b$,
$$
N_\Lambda^\times(a,b;T)
\sim
\frac{C_{\|\cdot\|}}{\covol(\Lambda)}
(b-a)T^2.
$$
\end{prop}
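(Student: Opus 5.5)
We deduce the nonsingular asymptotic from \Cref{prop:nonisotropic-window-d2} and \Cref{lem:nonisotropic-zero-negligible-d2}. The only remaining task is to compare the set counted by $N_\Lambda^\times(a,b;T)$ with the set counted in \Cref{prop:nonisotropic-window-d2}.

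First, every vector lying in some plane $V\in\mathcal I_\Lambda$ satisfies $\det v=0$, since $V\subset\{\det=0\}$. The zero vector also has determinant zero. Hence the condition $\det v\ne0$ forces $v\in\Lambda_{\operatorname{ni}}$, and we obtain the exact identity
$$
N_\Lambda^\times(a,b;T)
=
\#\{v\in\Lambda_{\operatorname{ni}}:\|v\|<T,\ a<\det v<b\}
-
\mathbf 1_{\{0\in(a,b)\}}\,
\#\{v\in\Lambda_{\operatorname{ni}}:\|v\|<T,\ \det v=0\}.
$$

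Second, \Cref{prop:nonisotropic-window-d2} gives the asymptotic $\frac{C_{\|\cdot\|}}{\covol(\Lambda)}(b-a)T^2$ for the first term. \Cref{lem:nonisotropic-zero-negligible-d2} shows that the subtracted term is $o(T^2)$. Since $C_{\|\cdot\|}>0$ and $b>a$, the main term has exact order $T^2$, so subtracting $o(T^2)$ preserves the asymptotic equivalence. If $0\notin(a,b)$, the two sets coincide and nothing needs to be subtracted.

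There is no real obstacle in this step: all the analytic work is already contained in the two cited results. The only points to check are that the isotropic planes lie in $\{\det=0\}$, which holds by the definition of $\mathcal I_\Lambda$, and that $C_{\|\cdot\|}>0$. The latter follows from \Cref{prop:volume-d2}, because the set $\{r>0:\|\operatorname{diag}(r,0)\|<1\}$ is a nonempty interval.

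Finally, the statement is made for a lattice satisfying the hypotheses of \Cref{thm:main}. It also holds for non-unimodular $\Lambda$, since the preceding results were all stated for such $\Lambda$ with the factor $\covol(\Lambda)$ already included.
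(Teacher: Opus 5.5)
Your proposal is correct and follows the paper's proof: the same exact identity, obtained by subtracting the $\Lambda_{\operatorname{ni}}$-points with $\det v=0$ when $0\in(a,b)$, combined with \Cref{prop:nonisotropic-window-d2} and \Cref{lem:nonisotropic-zero-negligible-d2}. Your explicit check that $C_{\|\cdot\|}>0$, so that an $o(T^2)$ correction preserves the asymptotic equivalence, is a detail the paper leaves implicit.
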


\begin{proof}
Every nonzero-determinant vector lies outside the planes in
$\mathcal I_\Lambda$. Therefore
$$
\begin{aligned}
N_\Lambda^\times(a,b;T)
={}&
\#\left\{
v\in\Lambda_{\operatorname{ni}}:
\|v\|<T,\ a<\det v<b
\right\}\\
&-
\mathbf 1_{\{0\in(a,b)\}}
\#\left\{
v\in\Lambda_{\operatorname{ni}}:
\|v\|<T,\ \det v=0
\right\}.
\end{aligned}
$$
The assertion follows from
\Cref{prop:nonisotropic-window-d2,lem:nonisotropic-zero-negligible-d2}.
\end{proof}

\subsection{The singular contribution}
\label{sec:singular-d2}

\begin{prop}[Singular asymptotic]
\label{prop:singular-d2}
One has
$$
\lim_{T\to\infty}
T^{-2}
\#\{v\in\Lambda:\|v\|<T,\ \det v=0\}
=
c_\Lambda^{\operatorname{sing}},
$$
where $c_\Lambda^{\operatorname{sing}}$ is given by
\eqref{eq:sing-explicit}.
\end{prop}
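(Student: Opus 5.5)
We split the zero-determinant lattice points into those lying in the finitely many planes of $\mathcal I_\Lambda$ and the rest:
$$
\#\{v\in\Lambda:\|v\|<T,\ \det v=0\}
=
\#\Bigl(\bigl(\textstyle\bigcup_{V\in\mathcal I_\Lambda}(\Lambda\cap V)\bigr)\cap\{\|v\|<T\}\Bigr)
+
\#\{v\in\Lambda_{\operatorname{ni}}:\|v\|<T,\ \det v=0\}
$$
(up to the single point $v=0$, which is irrelevant). By \Cref{lem:nonisotropic-zero-negligible-d2}, the second term is $o(T^2)$. It therefore remains to show that the first term equals $c_\Lambda^{\operatorname{sing}}T^2+o(T^2)$.

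By \Cref{prop:finite-isotropic-planes-d2}, $\mathcal I_\Lambda$ is finite, with at most four elements. For each $V\in\mathcal I_\Lambda$, the set $\Lambda\cap V$ is a lattice in the two-dimensional space $V$. The restriction of $\|\cdot\|$ to $V$ is a norm, so $\{v\in V:\|v\|<1\}$ is a bounded convex set with boundary of $\vol_V$-measure zero. The standard two-dimensional lattice-point asymptotic (for example, Lipschitz counting for dilates of a convex body) then gives
$$
\#\{v\in\Lambda\cap V:\|v\|<T\}
=
\frac{\vol_V\{v\in V:\|v\|<1\}}{\covol_V(\Lambda\cap V)}\,T^2+o(T^2).
$$

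Next we handle overlaps by inclusion--exclusion over the finitely many planes. For distinct $V,V'\in\mathcal I_\Lambda$, the intersection $V\cap V'$ has dimension at most one. Hence $\Lambda\cap V\cap V'$ is contained in a line, or is $\{0\}$, and contains only $O(T)$ points of norm $<T$. All intersection terms are therefore $o(T^2)$. Summing over $V\in\mathcal I_\Lambda$ gives exactly \eqref{eq:sing-explicit} as the leading coefficient. If $\mathcal I_\Lambda$ is empty, the sum is zero, and so is $c_\Lambda^{\operatorname{sing}}$.

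The argument contains no real obstacle. The only points requiring care are, first, that the planes in $\mathcal I_\Lambda$ are removed from $\Lambda_{\operatorname{ni}}$ exactly as in its definition, so the decomposition above is a genuine partition. Second, the error term for a convex region in a 2-lattice is uniform only after fixing the plane, which is harmless because there are finitely many planes. Third, covolumes and volumes in $V$ are taken with respect to the trace inner product, consistently with \eqref{eq:sing-explicit}.
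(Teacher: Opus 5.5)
Your proposal is correct and matches the paper's proof: you count each of the finitely many planes in $\mathcal I_\Lambda$ with the two-dimensional lattice-point asymptotic, bound their pairwise overlaps (which lie in lines) by $O(T)$, and dispose of the remaining zero-determinant points of $\Lambda_{\operatorname{ni}}$ via \Cref{lem:nonisotropic-zero-negligible-d2}. The only difference is that you spell out the partition and the origin bookkeeping explicitly.
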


\begin{proof}
For each $V\in\mathcal I_\Lambda$, the standard lattice-point asymptotic
for dilates of a bounded convex planar region gives
$$
\#\{v\in\Lambda\cap V:\|v\|<T\}
=
\frac{\vol_V\{v\in V:\|v\|<1\}}
{\covol_V(\Lambda\cap V)}
T^2
+
o(T^2).
$$
If $V,W\in\mathcal I_\Lambda$ are distinct, then
$\dim(V\cap W)\le1$, so the lattice points in their intersection contribute
$O(T)$. Summing over the finite set $\mathcal I_\Lambda$ and using
\Cref{lem:nonisotropic-zero-negligible-d2} proves the result.
\end{proof}

\begin{cor}[Positivity of the singular constant]
\label{cor:singular-positivity-d2}
One has $c_\Lambda^{\operatorname{sing}}>0$ if and only if $\Lambda$ contains a
rank-two $\mathbb Z$-submodule on which the determinant vanishes
identically.
\end{cor}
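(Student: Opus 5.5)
The plan is to read positivity directly off the explicit formula \eqref{eq:sing-explicit}. Every summand of that finite sum is strictly positive: for a two-dimensional subspace $V$, the set $\{v\in V:\|v\|<1\}$ is a nonempty open bounded neighborhood of $0$ in $V$, so $\vol_V$ of it is positive, and $\covol_V(\Lambda\cap V)$ is a finite positive number because $\Lambda\cap V$ is a lattice in $V$. By \Cref{prop:finite-isotropic-planes-d2}, $\mathcal I_\Lambda$ is finite, so $c_\Lambda^{\operatorname{sing}}$ is a finite sum of positive numbers. Hence $c_\Lambda^{\operatorname{sing}}>0$ holds exactly when $\mathcal I_\Lambda\neq\emptyset$. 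It then remains to show that $\mathcal I_\Lambda\ne\emptyset$ if and only if $\Lambda$ contains a rank-two $\mathbb Z$-submodule on which $\det$ vanishes identically.

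For the forward direction, take $V\in\mathcal I_\Lambda$. Then $\Lambda\cap V$ is a lattice in the two-dimensional space $V$, so it is a rank-two $\mathbb Z$-submodule of $\Lambda$. Since $V\subset\{\det=0\}$, the determinant vanishes on it.

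For the converse, let $L\subset\Lambda$ be a rank-two submodule with $\det|_L\equiv0$, and set $V=\mathbb R L$. Because $\Lambda$ is discrete, a rank-two subgroup of it spans a two-dimensional real space, so $\dim V=2$. The function $\det|_V$ is a quadratic form on $V$ that vanishes on the basis vectors $u,v$ of $L$ and on $u+v$. It therefore has zero diagonal coefficients and zero polarization, and so $\det|_V\equiv0$, which gives $V\subset\{\det=0\}$. Finally, $\Lambda\cap V$ is discrete and contains $L$, which spans $V$, so it is a lattice in $V$; that is, $V$ is $\Lambda$-rational. Thus $V\in\mathcal I_\Lambda$.

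None of these steps is a real obstacle. The only points needing care are that the rational span of $L$ is exactly two-dimensional, which follows from discreteness of $\Lambda$, and that vanishing on a basis alone does not force a quadratic form to vanish. The second point is handled by using the full submodule hypothesis, in particular vanishing at $u+v$.
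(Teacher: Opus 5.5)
Your proposal is correct and follows the same route as the paper: positivity of $c_\Lambda^{\operatorname{sing}}$ is equivalent to $\mathcal I_\Lambda\neq\emptyset$ by \eqref{eq:sing-explicit}, and the elements of $\mathcal I_\Lambda$ correspond to rank-two submodules of $\Lambda$ on which $\det$ vanishes. You also make explicit two points the paper leaves implicit: that each summand in \eqref{eq:sing-explicit} is strictly positive, and that vanishing of $\det$ at $u$, $v$, $u+v$ forces $\det$ to vanish on the real span.
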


\begin{proof}
Such a submodule spans a $\Lambda$-rational two-plane contained in
$\{\det=0\}$, and hence an element of $\mathcal I_\Lambda$. Conversely,
every $V\in\mathcal I_\Lambda$ contains the rank-two lattice
$\Lambda\cap V$.
\end{proof}

\subsection{Completion of the proof}

For every $T>0$,
$$
N_\Lambda(a,b;T)
=
N_\Lambda^\times(a,b;T)
+
\mathbf 1_{\{0\in(a,b)\}}
\#\{v\in\Lambda:\|v\|<T,\ \det v=0\}.
$$
The asymptotic formula \eqref{eq:main-full} now follows from
\Cref{prop:nonsingular-d2,prop:singular-d2}, and the characterization of
the positivity of the singular term follows from
\Cref{cor:singular-positivity-d2}. This completes the proof of
\Cref{thm:main}.
\def\cprime{$'$}

\end{document}